\title{On generalized harmonic numbers, Tornheim double series and linear Euler sums \thanks{%
MSC 2010: 65B10, 11B99}}
\author[]{Kunle Adegoke\thanks{adegoke00@gmail.com\\Keywords: Harmonic numbers, Euler sums, Tornheim double series, zeta function}}
\affil{Department of Physics and Engineering Physics, \mbox{Obafemi Awolowo University, Ile-Ife, 220005 Nigeria}}
\theoremstyle{plain}
\numberwithin{equation}{section}
\newtheorem{thm}{THEOREM}[section]
\newtheorem{lemma}[thm]{LEMMA}
\newtheorem{cor}[thm]{COROLLARY}
\begin{document}
\date{}
\maketitle
\begin{abstract}
\noindent Direct links between generalized harmonic numbers, linear Euler sums and Tornheim double series are established in a more perspicuous manner than is found in existing literature. We show that every linear Euler sum can be decomposed into a linear combination of Tornheim double series of the same weight. New closed form evaluations of various Euler sums are presented. Finally certain combinations of linear Euler sums that are reducible to Riemann zeta values are discovered.
\end{abstract}
\tableofcontents

\section{Introduction}
\subsection{Generalized harmonic numbers and linear Euler sums}
Generalized harmonic numbers have a long history, having been studied since the time of Euler.
  The $r^{th}$ generalized harmonic number of order $n$, denoted by $H_{r,n}$ in this paper, is defined by $$H_{r,n}=\sum_{s=1}^r\frac 1{s^n}\,,$$ where $H_{r,1}=H_r$ is the $r^{th}$ harmonic number and $H_{0,n}=0$. The generalized harmonic number converges to the Riemann zeta function, $\zeta(n)$: 
\begin{equation}\label{equ.nmhpf42}
\lim_{r\to\infty}H_{r,n}=\zeta(n),\quad \mathcal{R}[n]>1\,,
\end{equation}
since $\zeta(n)=\sum_{s=1}^\infty {s^{-n}}\,.$

\bigskip

Of particular interest in the study of harmonic numbers is the evaluation of infinite series involving the generalized harmonic numbers, especially linear Euler sums of the type $$E(m,n)=\sum_{\nu=1}^\infty \frac{H_{\nu,m}}{\nu^n}\,.$$ The linear sums can be evaluated in terms of zeta values in the following cases: $m=1$, $m=n$, $m+n$ odd and $m+n=6$~(with $n>1$), (see~\cite{flajolet}).

\bigskip

Evaluation of Euler sums, E(m,n), of odd weight, $m+n$, in terms of $\zeta$~values can be accomplished through Theorem~3.1 of~\cite{flajolet}.

\bigskip

As for the case $m=1$, we have
\begin{thm}[Euler]\label{th.nj1j516} For $n-1\in\mathbb{Z^+}$ holds
\[
2E(1,n)=2\sum_{\nu = 1}^\infty  {\frac{{H_\nu }}{{\nu^n }}}  = (n + 2)\zeta (n + 1) - \sum_{j = 1}^{n - 2} {\zeta (j + 1)\zeta (n - j)}\,. 
\]

\end{thm}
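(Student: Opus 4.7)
The plan is to reduce $E(1,n)$ to a Tornheim-type double series, expand via partial fractions, and close the loop using a reflection identity for Euler sums; this mirrors the philosophy advertised in the abstract.

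Writing $H_\nu = H_{\nu-1} + 1/\nu$ and reindexing $\nu = j + m$ with $j, m \geq 1$ gives $E(1,n) = \zeta(n+1) + T$, where $T = \sum_{j,m \geq 1} j^{-1}(j+m)^{-n}$. Next I would establish the partial-fraction expansion
\[
\frac{1}{j(j+m)^n} = \frac{1}{m^{n}\,j} \;-\; \sum_{p=1}^{n} \frac{1}{m^{p}(j+m)^{n+1-p}},
\]
obtained by iterating $\frac{1}{j(j+m)} = \frac{1}{m}\bigl(\frac{1}{j} - \frac{1}{j+m}\bigr)$ (or by induction on $n$). When summed over $j$, the divergent leading term and the $p=n$ tail pair up and telescope to $H_m/m^n$, while for $1 \leq p \leq n-1$ the tail sums converge to $\zeta(n+1-p) - H_{m,n+1-p}$. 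The net result is $\sum_{j} j^{-1}(j+m)^{-n} = H_m/m^n - \sum_{p=1}^{n-1} m^{-p}\bigl(\zeta(n+1-p) - H_{m,n+1-p}\bigr)$.

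Summing next over $m$, the $p=1$ slice collapses back to $-T$ itself, since $\zeta(n) - H_{m,n}$ is the tail $\sum_{s>m} s^{-n}$; for $p \geq 2$ the pieces are convergent and contribute $\zeta(p)\zeta(n+1-p) - E(n+1-p,\,p)$. Moving $T$ across and using $T = E(1,n) - \zeta(n+1)$ produces
\[
E(1,n) = 2\zeta(n+1) - \sum_{p=2}^{n-1}\zeta(p)\zeta(n+1-p) + \sum_{p=2}^{n-1} E(n+1-p,\,p).
\]
The residual Euler sums are then eliminated via the reflection identity $E(a,b) + E(b,a) = \zeta(a)\zeta(b) + \zeta(a+b)$ valid for $a, b \geq 2$, which follows by splitting $\zeta(a)\zeta(b) = \sum_{j,k} j^{-a} k^{-b}$ according to $j < k$, $j = k$, and $j > k$. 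Pairing $p$ with $n+1-p$ inside $\sum_{p=2}^{n-1} E(n+1-p,\,p)$ halves it to $\tfrac{1}{2}\sum_{p=2}^{n-1}\zeta(p)\zeta(n+1-p) + \tfrac{n-2}{2}\zeta(n+1)$; substituting and multiplying by $2$ yields the claim after the reindexing $j = p-1$.

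The principal obstacle is bookkeeping the conditionally divergent pieces of the partial-fraction expansion. The $\frac{1}{m^{n}\,j}$ term and the $p=n$ tail must be kept together and telescoped \emph{before} the inner sum over $j$, and the $p=1$ contribution reproduces the double series $T$, so the resulting identity is self-referential and must be solved for $E(1,n)$ rather than simply read off. The case $n=2$ (where the reflection step is vacuous) serves as a useful sanity check, yielding the classical $E(1,2) = 2\zeta(3)$.
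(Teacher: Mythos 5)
Your proof is correct, and it is worth noting that the paper itself offers no proof of this statement: Theorem~\ref{th.nj1j516} is stated as Euler's classical result and simply used downstream (e.g.\ to eliminate $E(1,r)$ in the corollaries of Theorem~\ref{thm.isoq1ou} and Theorem~\ref{thm.bvgewqc}), so any comparison is with the classical argument rather than with something in the text. Your route is the standard symmetric-sum proof: split off the diagonal to get $E(1,n)=\zeta(n+1)+T(1,0,n)$, expand the kernel $1/(j(j+m)^n)$ by the telescoping partial fraction (which is exactly the paper's identity \eqref{equ.gk1txsj} in disguise), recover $H_m/m^n$ from the paired divergent terms via \eqref{equ.gbbnxxa}, observe that the $p=1$ slice reproduces $T$ itself so the relation can be solved for $E(1,n)$, and finish with the reflection formula \eqref{equ.csxx7lv}. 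I checked the bookkeeping: the inner-sum evaluation, the self-referential $-T$ term, the identity $E(1,n)=2\zeta(n+1)-\sum_{p=2}^{n-1}\zeta(p)\zeta(n{+}1{-}p)+\sum_{p=2}^{n-1}E(n{+}1{-}p,p)$, and the halving of the residual Euler sums all come out right, and the $n=2$ case gives $2\zeta(3)$ as it should. Two small points of care: the grouping of the $1/(m^n j)$ term with the $p=n$ tail should formally be done at the level of partial sums $\sum_{j=1}^{N}$ before letting $N\to\infty$ (as the paper does when deriving \eqref{equ.mvt7nzf}), and your phrase that the $p\ge 2$ pieces ``contribute $\zeta(p)\zeta(n{+}1{-}p)-E(n{+}1{-}p,p)$'' describes the subtracted quantity rather than the signed contribution, but your displayed equation has the signs right. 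What your approach buys is a self-contained derivation using only tools the paper already develops, making the theorem a consequence of the paper's own machinery rather than an imported black box.
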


\subsection{Tornheim double series and relation to linear Euler sums}

Tornheim double series, T(r,s,t), is defined by
\[
T(r,s,t) = \sum_{\mu  = 1}^\infty  {\sum_{\nu  = 1}^\infty  {\frac{1}{{\mu ^r \nu ^s (\nu  + \mu )^t }}} } 
\]
and named after Leonard~Tornheim who made a systematic and extended study of the series in a 1950 paper,~\cite{tornheim}. $T(r,s,t)$ has the following basic properties~\cite{huard}:
\begin{subequations}
\begin{equation}\label{equ.qg21457}
T(r,s,t)=T(s,r,t)\,,
\end{equation}
\begin{equation}
T(r,s,t) \mbox{ is finite if and only if } r+t>1, s+t>1 \mbox{ and } r+s+t>2\,,
\end{equation}
\begin{equation}
T(r,s,0)=\zeta(r)\zeta(s)\,,
\end{equation}
\begin{equation}
T(r,0,t)+T(t,0,r)=\zeta(r)\zeta(t)-\zeta(r+t)\,,\quad r\ge 2
\end{equation}
\mbox{and}
\begin{equation}\label{equ.kllr3ob}
T(r,s-1,t+1)+T(r-1,s,t+1)=T(r,s,t),\quad r\ge 1, s\ge 1.
\end{equation}
\end{subequations}

In light of~\eqref{equ.nmhpf42}, the useful identity 
\begin{equation}\label{equ.i4135lr}
\sum_{\nu  = 1}^N {\frac{1}{{(\nu  + \mu )^t }}}  = H_{N + \mu ,t}  - H_{\mu ,t}\,,
\end{equation}
leads to
\begin{equation}\label{equ.k6u5u0q}
\sum_{\nu  = 1}^\infty  {\frac{1}{{(\nu  + \mu )^t }}}  = \zeta (t) - H_{\mu ,t}\,,
\end{equation}
which establishes the link between the Hurwitz zeta function, $\zeta(t,\mu)$, the Riemann zeta function and the generalized harmonic numbers (see also equation~\mbox{(1.19)} of~\cite{adamchik}) as
\begin{equation}
\zeta(t,\mu)=\zeta(t)-H_{\mu-1,t}\,,
\end{equation}
since
$$\zeta(t,\mu)=\sum_{\nu=0}^\infty\frac 1{(\mu+\nu)^t}\,.$$

\bigskip

The identity~\eqref{equ.k6u5u0q} also brings out the direct connection between the linear Euler sums and the Tornheim double series, namely,
\begin{equation}\label{equ.lf9ruko}
E(n,m)=\zeta(n)\zeta(m)-T(m,0,n),\quad n>1,m>1\,.
\end{equation}
Differentiating the identity
\begin{equation}\label{equ.yav4944}
\frac{1}{\nu } - \frac{1}{{\nu  + \mu }}=\frac{\mu }{{\nu (\nu  + \mu )}}\,,
\end{equation}

$n-1$ times with respect to $\nu$ gives
\begin{equation}\label{equ.gk1txsj}
\frac{1}{{\nu ^n }} - \frac{1}{{(\nu  + \mu )^n }}=\sum_{p = 0}^{n - 1} {\frac{\mu }{{\nu ^{p + 1} (\nu  + \mu )^{n - p} }}}\,,\quad n\in\mathbb{N}_0\,,
\end{equation}
from which, by summing over $\nu$, employing~\eqref{equ.i4135lr}, we obtain

\[
H_{N,n}  - H_{N + \mu ,n}  + H_{\mu ,n}=\sum_{p = 0}^{n - 1} {\sum_{\nu  = 1}^N {\frac{\mu }{{\nu ^{p + 1} (\nu  + \mu )^{n - p} }}} }\,, 
\]
and hence, in the limit $N\to\infty$ we have 
\begin{equation}\label{equ.mvt7nzf}
H_{\mu ,n}=\sum_{p = 0}^{n - 1} {\sum_{\nu  = 1}^\infty  {\frac{\mu }{{\nu ^{p + 1} (\nu  + \mu )^{n - p} }}} }\,,\quad n\in\mathbb{Z^+}\,,
\end{equation}
from which follows, for $n,r-1\in\mathbb{Z^+}$, the interesting relation
\[
\sum_{\mu  = 1}^\infty  {\frac{{H_{\mu ,n} }}{{\mu ^r }}}=\sum_{p = 0}^{n - 1} {\sum_{\mu  = 1}^\infty  {\sum_{\nu  = 1}^\infty  {\frac{1}{{\mu ^{r - 1} \nu ^{p + 1} (\nu  + \mu )^{n - p} }}} } }\,, 
\]
that is,
\[
 E(n,r)=\sum_{p = 0}^{n - 1} {T(r - 1,p + 1,n - p)}\,,
\]
so that,
\begin{thm}\label{thm.tiqlu9p}
Any linear Euler sum can be decomposed into a linear combination of Tornheim double series:
\[
\boxed{E(n,r)=\sum_{p=1}^n T(r-1,n-p+1,p)}\,,\quad n,r-1\in\mathbb{Z^+}\,.
\]
\end{thm}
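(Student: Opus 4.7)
The plan is to produce a Tornheim-series representation of $H_{\mu,n}$ as an explicit function of $\mu$, then substitute into $E(n,r) = \sum_{\mu} H_{\mu,n}/\mu^r$ and rearrange. First I would take the elementary identity \eqref{equ.yav4944} and differentiate $n-1$ times in $\nu$ to obtain \eqref{equ.gk1txsj}, which splits $\nu^{-n} - (\nu+\mu)^{-n}$ into $n$ partial-fraction summands, each already of Tornheim form. Summing over $\nu$ from $1$ to $N$ and applying the finite identity \eqref{equ.i4135lr} to the left-hand side yields $H_{N,n} - H_{N+\mu,n} + H_{\mu,n}$; letting $N \to \infty$ makes the first two terms cancel (both tending to $\zeta(n)$), leaving precisely the formula \eqref{equ.mvt7nzf} for $H_{\mu,n}$.

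Next I would divide by $\mu^r$ and sum over $\mu \ge 1$, swapping the order of summation so that the triple sum collapses into a linear combination of Tornheim double series. The factor $\mu$ in the numerator combines with $\mu^r$ in the denominator to give $\mu^{r-1}$, and recognizing each resulting inner double sum as $T(r-1, p+1, n-p)$ produces $E(n,r) = \sum_{p=0}^{n-1} T(r-1, p+1, n-p)$. To cast this in the boxed form, I would reindex by the substitution $p \mapsto n-p$, under which the new summation index runs from $1$ to $n$ and the summand $T(r-1, p+1, n-p)$ becomes $T(r-1, n-p+1, p)$, giving the claimed identity.

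The only point requiring care is the interchange of the $\mu$-sum with the inner sums. Since all summands are nonnegative, Tonelli's theorem reduces this to checking finiteness of each $T(r-1, p+1, n-p)$, which follows from the Tornheim convergence criteria (verifiable directly from the hypotheses $r \ge 2$ and $0 \le p \le n-1$, as all three exponent-sum conditions hold with room to spare). Apart from this routine justification and the final reindexing, essentially all of the algebraic legwork has already been done in the derivation shown just above the theorem statement, so the remaining task is really only to recognize and repackage what has been produced.
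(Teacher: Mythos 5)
Your proposal is correct and follows essentially the same route as the paper: differentiate \eqref{equ.yav4944} to get \eqref{equ.gk1txsj}, sum over $\nu$ via \eqref{equ.i4135lr} and let $N\to\infty$ to obtain \eqref{equ.mvt7nzf}, then divide by $\mu^r$, sum over $\mu$, and reindex $p\mapsto n-p$. Your explicit Tonelli justification for the interchange of sums (with the convergence check on each $T(r-1,p+1,n-p)$) is a small addition of rigor that the paper leaves tacit, but the argument is otherwise identical.
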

\begin{cor}
$$E(1,r)=T(r-1,1,1),\quad r>1\,.$$
\end{cor}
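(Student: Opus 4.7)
The corollary is an immediate specialization of the boxed theorem, so my plan is simply to instantiate it at $n=1$ and verify that the resulting sum collapses to a single term. Concretely, I would write $E(1,r) = \sum_{p=1}^{1} T(r-1, 1-p+1, p)$, observe that the summation index takes only the value $p=1$, and conclude $E(1,r) = T(r-1,1,1)$. The only thing to check is that the hypothesis $n, r-1 \in \mathbb{Z}^+$ of the theorem is satisfied, which for $n=1$ reduces to $r-1 \in \mathbb{Z}^+$, i.e., $r > 1$, matching the stated hypothesis of the corollary.

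As a sanity check that the Tornheim series on the right actually converges, I would verify the convergence conditions recalled just after the definition of $T(r,s,t)$: with $(r,s,t) = (r-1,1,1)$, one needs $r-1+1 = r > 1$, $1+1 = 2 > 1$, and $r-1+1+1 = r+1 > 2$, all of which hold whenever $r>1$. So no degenerate behavior appears at the boundary.

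Since there is no real obstacle here, the main pedagogical value is to double-check the index bookkeeping in the boxed formula. An alternative, self-contained route (which I would mention only if a more direct derivation were desired) is to go back to \eqref{equ.mvt7nzf} with $n=1$, giving $H_{\mu,1} = \sum_{\nu=1}^{\infty} \frac{\mu}{\nu(\nu+\mu)}$, then divide by $\mu^r$ and sum over $\mu$ to recover $E(1,r) = \sum_{\mu,\nu \ge 1} \frac{1}{\mu^{r-1}\nu(\nu+\mu)} = T(r-1,1,1)$ directly. This confirms the corollary without invoking the full theorem and serves as a consistency check of the boxed identity at its simplest nontrivial case.
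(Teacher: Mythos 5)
Your proof is correct and matches the paper's (implicit) argument: the corollary is simply the $n=1$ instance of the boxed theorem, and your index check and convergence verification are exactly what is needed. The alternative derivation you sketch from \eqref{equ.mvt7nzf} is also just the paper's own proof of the theorem specialized to $n=1$, so nothing genuinely different is introduced.
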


Note the use of the index shift identity:
\begin{equation}\label{equ.mjf11t4}
\sum_{i = a}^b {f_i}\equiv\sum_{i = u - b}^{u - a} {f_{u - i}}\,,
\end{equation}
of which the not so familiar index shift formulas (see~\cite{gould}) $$\sum_{i=a}^b{f_i}=\sum_{i=0}^{b-a}{f_{b-i}}$$ and $$\sum_{i=a}^b{f_i}=\sum_{i=a}^{b}{f_{a+b-i}}$$ are particular cases, obtained, respectively, by setting $u=b$ and $u=a+b$.

\bigskip

A useful variant of~\eqref{equ.mjf11t4} is
\[
\sum_{i = a}^{b - a} {f_i}\equiv\sum_{i = a}^{b - a} {f_{b-i}}\,.
\]

\bigskip

Throughout this paper we shall make frequent tacit use of the index shift identity. As an immediate application, if we choose $f_\nu=1/(\mu-\nu)^t$, $a=1$, $b=N$ and $u=\mu$ in~\eqref{equ.mjf11t4} we obtain the following analog of~\eqref{equ.i4135lr}:
\begin{equation}\label{equ.s2fevcr}
\sum_{\nu  = 1}^N {\frac{1}{{(\mu  - \nu )^t }}}  = H_{\mu  - 1,t}  - H_{\mu  - N - 1,t}\,,
\end{equation}
which, over the ring of integers, is valid for \mbox{$1\le N<\mu$}.

\bigskip

For evaluating sums with $a>b$ we shall use
\[
\sum_{i=a}^b{f_i}\equiv-\sum_{i=b+1}^{a-1}{f_i}\,.
\]
In particular $$\sum_{i=a+1}^{a-1}{f_i}=-f_a\mbox{ and }\sum_{i=a}^{a-1}{f_i}=0\,.$$

\bigskip

The beautiful formula from case $n=1$ in \eqref{equ.mvt7nzf}, that is,
\begin{equation}\label{equ.gbbnxxa}
\sum_{\nu=1}^\infty\frac\mu{\nu(\mu+\nu)}=H_\mu\,,\quad \mu\in\mathbb{C}\backslash\mathbb{Z^-}\,,
\end{equation}
was also derived in~\cite{georghiou} and~\cite{basu}.

\bigskip

A brief summary of some of the interesting results contained in this paper is given in Section~\ref{sec.summary}.

\section{Generalized harmonic numbers and \mbox{summation} of series}
In this section we discuss the evaluation of certain sums in terms of the Riemann zeta function and the generalized harmonic numbers.

\bigskip

When the following partial fraction decomposition, valid for $\mu,\nu\in\mathbb{C}\backslash\{0\}$ and $s,t\in\mathbb{Z}$ such that either $st\in\mathbb{Z^-}$ or $s+t\in\mathbb{Z^+}$ (see for example equation (2.4) of~\cite{subbarao} ),
\begin{equation}\label{equ.hrp8apz}
\frac{1}{{\nu ^s (\nu  + \mu )^t }} = \sum_{i = 0}^{s - 1} {\binom{t+i-1}i\frac{{( - 1)^i }}{{\nu ^{s - i} \mu ^{t + i} }}}  + \sum_{i = 0}^{t - 1} {\binom{s+i-1}i\frac{{( - 1)^s }}{{\mu ^{s + i} (\nu  + \mu )^{t - i} }}}\,,
\end{equation}
is summed over $\nu$, taking~\eqref{equ.i4135lr} into consideration, we get
\begin{thm}\label{thm.b2i7lfz}
For $\mu\in\mathbb{C}\backslash\mathbb{Z^-}$, $\mu\ne 0$ and $s,t\in\mathbb{Z}$ such that either $st\in\mathbb{Z^-}$ or $s+t\in\mathbb{Z^+}$ holds
\[
\begin{split}
\sum_{\nu  = 1}^N {\frac{1}{{\nu ^s (\nu  + \mu )^t }}}  &= \sum_{i = 0}^{s - 1} {\binom{t+i-1}i\frac{{( - 1)^i H_{N,s - i} }}{{\mu ^{t + i} }}}\\
&\qquad  + ( - 1)^s\sum_{i = 0}^{t - 1} {\binom{s+i-1}i\frac{{ \left[ {H_{N + \mu ,t - i}  - H_{\mu ,t - i} } \right]}}{{\mu ^{s + i} }}}\,.
\end{split}
\]

In particular,
\begin{equation}\label{equ.s45a41o}
\begin{split}
\sum_{\nu  = 1}^N {\frac{1}{{\nu ^s (\nu  + \mu )^s }}}  &= \sum_{i = 0}^{s - 1} {\binom{s+i-1}i\frac{{(-1)^iH_{N,s - i}  + (-1)^s\left[H_{N+\mu,s - i}  - H_{\mu,s - i}\right] }}{{\mu ^{s + i} }}}\\
&= ( - 1)^{s - 1} \sum_{i = 1}^s {\binom{2s-i-1}{s-1}\frac{{H_{\mu ,i}  - H_{N + \mu ,i}  - ( - 1)^i H_{N,i} }}{{\mu ^{2s - i} }}}\,.
\end{split}
\end{equation}

\end{thm}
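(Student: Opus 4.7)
The plan is already foreshadowed by the sentence preceding the statement: the proof reduces to summing the partial fraction identity \eqref{equ.hrp8apz} term by term from $\nu=1$ to $\nu=N$. Since the binomial coefficients and the powers of $\mu$ do not depend on $\nu$, the only $\nu$-sums to carry out are $\sum_{\nu=1}^N \nu^{-(s-i)}$ and $\sum_{\nu=1}^N (\nu+\mu)^{-(t-i)}$. The first is exactly $H_{N,s-i}$ by the definition of the generalized harmonic number, and the second is $H_{N+\mu,t-i}-H_{\mu,t-i}$ by the already-established identity \eqref{equ.i4135lr}. Direct substitution of these two evaluations back into the summed decomposition yields the first display of the theorem, with no additional manipulation required.

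For the particular case $s=t$ in \eqref{equ.s45a41o}, I would first specialize $t=s$ in the general formula just proved. The two sums then share the same index range $0\le i\le s-1$ and the identical denominator factor $\mu^{s+i}$, so they merge into a single sum over $i$, producing the first of the two displayed expressions.

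To reach the second, more compact form, I would perform the index shift $i\mapsto s-j$ (a direct application of \eqref{equ.mjf11t4}) so that $j$ runs from $1$ to $s$. The binomial transforms by $\binom{s+i-1}{i}=\binom{2s-j-1}{s-j}$, which equals $\binom{2s-j-1}{s-1}$ by the standard symmetry $\binom{n}{k}=\binom{n}{n-k}$. The sign factor $(-1)^{i}=(-1)^{s-j}$ combines with the prefactor $(-1)^s$; pulling $(-1)^{s-1}$ out in front flips the bracket to $H_{\mu,j}-H_{N+\mu,j}-(-1)^{j}H_{N,j}$, matching the stated form.

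The entire argument is mechanical once \eqref{equ.hrp8apz} and \eqref{equ.i4135lr} are in hand; the only step that demands any care is the final reindexing, where I would have to track the sign pattern and invoke the binomial symmetry carefully, but this is routine bookkeeping rather than a genuine obstacle.
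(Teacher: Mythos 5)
Your proposal is correct and follows exactly the paper's route: the theorem is obtained by summing the partial fraction decomposition \eqref{equ.hrp8apz} over $\nu$ from $1$ to $N$ and evaluating the two resulting $\nu$-sums as $H_{N,s-i}$ and, via \eqref{equ.i4135lr}, as $H_{N+\mu,t-i}-H_{\mu,t-i}$. Your reindexing $i\mapsto s-j$ with the binomial symmetry and the sign bookkeeping for the second form of \eqref{equ.s45a41o} checks out as well.
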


Writing \eqref{equ.hrp8apz} as
\begin{equation}\label{equ.p5bh6gf}
\begin{split}
\frac{1}{{\nu ^s (\nu  + \mu )^t }} &= \sum_{i = 0}^{s - 2} {\binom{t+i-1}i\frac{{( - 1)^i }}{{\nu ^{s - i} \mu ^{t  + i} }}}\\
&\quad  + ( - 1)^s \sum_{i = 0}^{t - 2} {\binom{s+i-1}i\frac{{1}}{{\mu ^{s + i} (\nu  + \mu )^{t - i} }}}\\
&\qquad+ ( - 1)^{s - 1} \binom{s+t-2}{s-1}\frac{1}{{\mu ^{ s + t - 1} }}\frac{\mu }{{\nu (\nu  + \mu )}}\,,
\end{split}
\end{equation}

and summing from $\nu=1$ to $\nu=\infty$ gives
\begin{cor}\label{thm.p1iwzvm}
For $\mu\in\mathbb{C}\backslash\mathbb{Z^-}$, $\mu\ne 0$ and $s,t\in\mathbb{Z^+}$ holds
\[
\begin{split}
\sum_{\nu  = 1}^\infty  {\frac{1}{{\nu ^s (\nu  + \mu )^t }}}  &= \sum_{i = 0}^{s - 2} {\binom{t+i-1}i\frac{{( - 1)^i \zeta (s - i)}}{{\mu ^{t + i} }}}\\
&\quad+ ( - 1)^s\sum_{i = 0}^{t - 2} {\binom{s+i-1}i\frac{{ \left[ {\zeta (t - i) - H_{\mu ,t - i} } \right]}}{{\mu ^{s + i} }}}\\
&\qquad + ( - 1)^{s - 1} \binom{s+t-2}{s-1}\frac{{H_\mu  }}{{\mu ^{s + t - 1} }}\,.
\end{split}
\]
In particular,
\begin{equation}\label{equ.cgxlioc}
\begin{split}
\sum_{\nu  = 1}^\infty  {\frac{1}{{\nu ^s (\nu  + \mu )^s }}} &= ( - 1)^{s - 1} \sum_{i = 1}^s {\binom{2s-i-1}{s-1}\frac{{H_{\mu ,i} }}{{\mu ^{2s - i} }}}\\
&\qquad- ( - 1)^{s - 1} 2\sum_{i = 1}^{\left\lfloor {{s \mathord{\left/
 {\vphantom {s 2}} \right.
 \kern-\nulldelimiterspace} 2}} \right\rfloor } {\binom{2s-2i-1}{s-1}\frac{{\zeta (2i)}}{{\mu ^{2s - 2i} }}}\,.
\end{split}
\end{equation}

\end{cor}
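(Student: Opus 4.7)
My plan is to obtain the general identity by summing the partial-fraction decomposition \eqref{equ.p5bh6gf} termwise from $\nu=1$ to $\infty$, and then specialize to the case $s=t$ via a careful reindexing.

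Concretely, the three groups in \eqref{equ.p5bh6gf} are designed so that every surviving $\nu$-series converges separately. In the first group the inner sums become $\sum_{\nu=1}^\infty 1/\nu^{s-i}$ with $s-i\ge 2$ (since $i\le s-2$), which equals $\zeta(s-i)$. In the second group the inner sums become $\sum_{\nu=1}^\infty 1/(\nu+\mu)^{t-i}$ with $t-i\ge 2$, which by the identity \eqref{equ.k6u5u0q} equals $\zeta(t-i)-H_{\mu,t-i}$. The third term is exactly the combination $\mu/(\nu(\nu+\mu))$ whose sum is $H_\mu$ by \eqref{equ.gbbnxxa}. Stitching these three evaluations together immediately yields the displayed formula for the general $(s,t)$.

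For the special case \eqref{equ.cgxlioc}, I would set $t=s$ in the formula just obtained and absorb the isolated $H_\mu$ term into the second sum. Indeed, at $i=s-1$ the summand of the second sum becomes $(-1)^s\binom{2s-2}{s-1}[\zeta(1)-H_{\mu,1}]/\mu^{2s-1}$; since $\zeta(1)$ is only formal, this ``missing'' piece is precisely supplied by the boundary $H_\mu$ term, so the harmonic contributions consolidate into $(-1)^{s-1}\sum_{i=0}^{s-1}\binom{s+i-1}{i}H_{\mu,s-i}/\mu^{s+i}$. Applying the index shift $i\mapsto s-i$ together with the symmetry $\binom{2s-i-1}{s-i}=\binom{2s-i-1}{s-1}$ produces the first line of \eqref{equ.cgxlioc}.

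The remaining zeta contributions (from the first sum plus the zeta part of the second sum) combine with coefficient $(-1)^i+(-1)^s$, which vanishes unless $i\equiv s\pmod 2$. Writing $s-i=2j$ and again invoking the binomial symmetry $\binom{2s-2j-1}{s-2j}=\binom{2s-2j-1}{s-1}$ collapses the surviving terms into the single sum $-2(-1)^{s-1}\sum_{j=1}^{\lfloor s/2\rfloor}\binom{2s-2j-1}{s-1}\zeta(2j)/\mu^{2s-2j}$, matching the second line of \eqref{equ.cgxlioc}. The only real obstacle is the bookkeeping in this last consolidation step; everything else is a direct application of the convergent evaluations \eqref{equ.k6u5u0q} and \eqref{equ.gbbnxxa}.
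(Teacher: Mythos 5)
Your proposal is correct and follows the paper's own route exactly: the Corollary is obtained by summing the rearranged partial-fraction decomposition \eqref{equ.p5bh6gf} over $\nu$, evaluating the three groups via $\zeta(s-i)$, the identity \eqref{equ.k6u5u0q}, and \eqref{equ.gbbnxxa} respectively. Your bookkeeping for the $t=s$ case (absorbing the $H_\mu$ term as the $i=s-1$ harmonic term, the parity cancellation $(-1)^i+(-1)^s$, and the symmetry $\binom{2s-i-1}{s-i}=\binom{2s-i-1}{s-1}$) checks out and merely makes explicit what the paper leaves implicit.
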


Substituting $-\nu$ for $\nu$ in~\eqref{equ.gk1txsj}, summing over $\nu$ and making use of~\eqref{equ.s2fevcr}, we obtain
\begin{thm}
For \mbox{$1\le N<\mu$} and $n\in\mathbb{N}_0$ holds
\[
\sum_{p = 1}^n {\left\{ {( - 1)^{p - 1} \sum_{\nu  = 1}^N {\frac{\mu }{{\nu ^{n - p + 1} (\mu  - \nu )^p }}} } \right\}}=H_{N,n}  - ( - 1)^n \left[ {H_{\mu  - 1,n}  - H_{\mu  - N - 1,n} } \right]\,.
\]

In particular,
\begin{equation}
\sum_{\nu  = 1}^N {\frac{\mu }{{\nu (\mu  - \nu )}}}  = H_N  + H_{\mu  - 1}  - H_{\mu  - N - 1}\,.
\end{equation}

\end{thm}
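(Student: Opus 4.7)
The approach is exactly the one flagged in the excerpt: take the algebraic identity \eqref{equ.gk1txsj}, flip the sign of $\nu$, then sum and invoke \eqref{equ.s2fevcr}.

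First I would substitute $\nu \mapsto -\nu$ in \eqref{equ.gk1txsj}. The left side becomes $(-1)^n/\nu^n - 1/(\mu-\nu)^n$, and each summand on the right acquires a factor $(-1)^{p+1}$, so after rearranging signs we obtain
\[
\frac{1}{(\mu-\nu)^n} - \frac{(-1)^n}{\nu^n} = \sum_{p=0}^{n-1} \frac{(-1)^p\,\mu}{\nu^{p+1}(\mu-\nu)^{n-p}}.
\]
Next I would sum this from $\nu=1$ to $\nu=N$. Under the hypothesis $1\le N<\mu$, all denominators on both sides stay nonzero, so \eqref{equ.s2fevcr} applies and gives $\sum_{\nu=1}^N (\mu-\nu)^{-n} = H_{\mu-1,n}-H_{\mu-N-1,n}$, while the elementary sum $\sum_{\nu=1}^N \nu^{-n} = H_{N,n}$ handles the other term. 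This produces
\[
H_{\mu-1,n} - H_{\mu-N-1,n} - (-1)^n H_{N,n} \;=\; \sum_{p=0}^{n-1} (-1)^p \sum_{\nu=1}^N \frac{\mu}{\nu^{p+1}(\mu-\nu)^{n-p}}.
\]

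Finally I would reshape the right-hand side into the form appearing in the theorem. Shifting $p \mapsto p-1$ turns the outer index range into $1\le p\le n$ with summand $(-1)^{p-1}\mu/\bigl(\nu^p(\mu-\nu)^{n-p+1}\bigr)$; then the involution $p \mapsto n+1-p$ (an instance of the index shift \eqref{equ.mjf11t4}) exchanges the exponents of $\nu$ and $\mu-\nu$ to match $\nu^{n-p+1}(\mu-\nu)^p$, while changing the sign factor $(-1)^{p-1}$ to $(-1)^{n-p} = (-1)^{n+1}(-1)^{p-1}$. Multiplying both sides by $(-1)^{n+1}$ and collecting terms converts the left side into $H_{N,n} - (-1)^n\bigl[H_{\mu-1,n}-H_{\mu-N-1,n}\bigr]$, which is the claimed identity.

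The only real obstacle is sign bookkeeping through the two reindexings; there is no analytic content beyond \eqref{equ.gk1txsj} and \eqref{equ.s2fevcr}. The special case $n=1$ should then fall out immediately by setting $n=1$ (only $p=1$ contributes, $(-1)^0 = 1$, and $H_{N,1}-(-1)[H_{\mu-1}-H_{\mu-N-1}] = H_N + H_{\mu-1} - H_{\mu-N-1}$), confirming the stated particular case.
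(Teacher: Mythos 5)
Your proposal is correct and follows exactly the paper's route: substitute $-\nu$ for $\nu$ in \eqref{equ.gk1txsj}, sum over $\nu$ using \eqref{equ.s2fevcr}, and reindex $p$ (your two-step reindexing is just the single shift $p\mapsto n-p$ the paper tacitly performs via \eqref{equ.mjf11t4}). The sign bookkeeping checks out, so nothing further is needed.
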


\bigskip

Replacing $\nu$ by $-\nu$ in~\eqref{equ.hrp8apz} gives the identity
\[
\frac{1}{{\nu ^s (\mu  - \nu )^t }} = \sum_{i = 0}^{s - 1} {\binom{t+i-1}i\frac{1}{{\nu ^{s - i} \mu ^{t + i} }}}  + \sum_{i = 0}^{t - 1} {\binom{s+i-1}i\frac{1}{{\mu ^{s + i} (\mu  - \nu )^{t - i} }}}\,,
\]
from which we obtain, after summing over $\nu$, using~\eqref{equ.s2fevcr},
\begin{thm}\label{thm.u9a6spe}
For \mbox{$1\le N<\mu$} and $s,t\in\mathbb{Z}$ such that either $st\in\mathbb{Z^-}$ or $s+t\in\mathbb{Z^+}$ holds
\[
\begin{split}
\sum_{\nu  = 1}^N {\frac{1}{{\nu ^s (\mu  - \nu )^t }}}  &= \sum_{i = 0}^{s - 1} {\binom{t+i-1}i\frac{{H_{N,s - i} }}{{\mu ^{t + i} }}}\\
&\qquad+ \sum_{i = 0}^{t - 1} {\binom{s+i-1}i\frac{{\left[ {H_{\mu  - 1,t - i}  - H_{\mu  - N - 1,t - i} } \right]}}{{\mu ^{s + i} }}}\\
&\\
&\quad\equiv \sum_{i = 1}^{s} {\binom{s+t-i-1}{t-1}\frac{{H_{N,i} }}{{\mu ^{s+t-i} }}}\\
&\qquad+ \sum_{i = 1}^{t} {\binom{s+t-i-1}{s-1}\frac{{\left[ {H_{\mu  - 1,i}  - H_{\mu  - N - 1,t} } \right]}}{{\mu ^{s + t - i} }}}\,.
\end{split}
\]

In particular,
\begin{equation}\label{equ.mzn7m04}
\sum_{\nu  = 1}^N {\frac{1}{{\nu ^s (\mu  - \nu )}}}  = \frac{{H_{\mu  - 1}  - H_{\mu  - N - 1} }}{{\mu ^s }} + \sum_{i = 1}^{s} {\frac{{H_{N,i} }}{{\mu ^{s - i + 1} }}}\,,
\end{equation}

\begin{equation}\label{equ.pjzwe2i}
\sum_{\nu  = 1}^N {\frac{1}{{\nu (\mu  - \nu )^t }}}  = \frac{{H_N }}{{\mu ^t }} + \sum_{i = 1}^{t} {\frac{{H_{\mu  - 1,i}  - H_{\mu  - N - 1,i} }}{{\mu ^{t - i + 1} }}}
\end{equation}
and
\begin{equation}\label{equ.hega9ix}
\sum_{\nu  = 1}^N {\frac{1}{{\nu ^s (\mu  - \nu )^s }}}  = \sum_{i = 1}^{s} {\binom{2s-i-1}{s-1}\frac{{H_{N,i}  + H_{\mu  - 1,i}  - H_{\mu  - N - 1,i} }}{{\mu ^{2s - i} }}}\,.
\end{equation}

\end{thm}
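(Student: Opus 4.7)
The plan is to start from the partial fraction decomposition~\eqref{equ.hrp8apz} and apply the substitution $\nu\mapsto-\nu$. Since $(-\nu)^{s-i}=(-1)^{s-i}\nu^{s-i}$, the sign factors $(-1)^i$ and $(-1)^s$ on the right combine to produce an overall factor $(-1)^s$ on both sides, which cancels and yields the sign-free identity
\[
\frac{1}{\nu^s(\mu-\nu)^t}=\sum_{i=0}^{s-1}\binom{t+i-1}{i}\frac{1}{\nu^{s-i}\mu^{t+i}}+\sum_{i=0}^{t-1}\binom{s+i-1}{i}\frac{1}{\mu^{s+i}(\mu-\nu)^{t-i}}\,,
\]
which is precisely the identity stated in the excerpt just above the theorem. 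So step one is nothing more than a careful sign-tracking check; the hypotheses on $s,t$ propagate unchanged from~\eqref{equ.hrp8apz}, since the substitution $\nu\mapsto-\nu$ does not alter the combinatorial content of the expansion.

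Next I would sum both sides from $\nu=1$ to $N$. The left side becomes the target sum. On the right, the two $\nu$-dependent factors are summed independently: $\sum_{\nu=1}^N\nu^{-(s-i)}=H_{N,s-i}$ directly from the definition of generalized harmonic numbers, while $\sum_{\nu=1}^N(\mu-\nu)^{-(t-i)}=H_{\mu-1,t-i}-H_{\mu-N-1,t-i}$ by the analog identity~\eqref{equ.s2fevcr}. The restriction $1\le N<\mu$ is exactly what is needed so that no summand on the left or in the second group on the right has a zero denominator. Collecting the resulting terms yields the first displayed form.

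For the second ``$\equiv$'' form I would reindex each of the two sums via the shift identity~\eqref{equ.mjf11t4}. In the first sum, setting $i\mapsto s-i$ makes the index run from $1$ to $s$, converts $H_{N,s-i}$ into $H_{N,i}$, and turns $\binom{t+i-1}{i}$ into $\binom{s+t-i-1}{s-i}=\binom{s+t-i-1}{t-1}$, where the last equality is the binomial symmetry $\binom{n}{k}=\binom{n}{n-k}$. The analogous substitution $i\mapsto t-i$ in the second sum produces the $H_{\mu-1,i}-H_{\mu-N-1,i}$ terms with binomial $\binom{s+t-i-1}{s-1}$. The three particular cases~\eqref{equ.mzn7m04}, \eqref{equ.pjzwe2i} and \eqref{equ.hega9ix} then follow by specializing $(s,t)$ to $(s,1)$, $(1,t)$ and $(s,s)$ respectively and simplifying the resulting binomial coefficients.

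There is no genuinely hard step here: the engine of the proof is the partial fraction~\eqref{equ.hrp8apz}, which the paper has already invoked for the $\nu+\mu$ version, and the only work is the sign bookkeeping under $\nu\mapsto-\nu$ together with the routine binomial reindexing. The subtlest point to be careful about is simply ensuring that the convergence/validity hypothesis ``$st\in\mathbb{Z}^-$ or $s+t\in\mathbb{Z}^+$'' from~\eqref{equ.hrp8apz} is transported correctly; since this hypothesis is symmetric in the roles of $\nu$ and the shifted variable, it is inherited without modification.
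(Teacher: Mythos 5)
Your proposal is correct and follows exactly the paper's own route: substitute $\nu\mapsto-\nu$ in~\eqref{equ.hrp8apz} (the $(-1)^s$ factors cancel on both sides), sum over $\nu$ from $1$ to $N$ using~\eqref{equ.s2fevcr}, and reindex via~\eqref{equ.mjf11t4} with the binomial symmetry to get the second form. Nothing is missing.
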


\bigskip

\begin{lemma}\label{thm.amfvtaz}
Let $a$, $c$ and $f$ be arbitrary functions such that $a\ne 0$, $c\ne 0$ and $af=c+a$, then for $m\in\mathbb{Z^+}$ holds $$af^{m}=af+\sum_{i=1}^{m-1}cf^i\,,$$
\end{lemma}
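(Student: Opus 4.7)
The hypothesis $af=c+a$ can be recast as the simple recurrence $c=af-a$, that is, $c=a(f-1)$. Note that $f\equiv 1$ would force $c=0$, contradicting the hypothesis $c\ne 0$, so $f-1$ is nonzero and may be divided by freely if needed. The plan is to establish the identity by a clean telescoping argument rather than by a messy induction on $m$.

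The key step is to multiply the hypothesis $af=c+a$ by $f^{j-1}$ for each integer $j\ge 1$, obtaining
\[
af^{j}=cf^{j-1}+af^{j-1},\qquad\text{i.e.,}\qquad af^{j}-af^{j-1}=cf^{j-1}.
\]
This is the one ingredient of the proof. Summing this single-step identity over $j=2,3,\dots,m$ produces a telescoping sum on the left-hand side, collapsing to $af^{m}-af$, while the right-hand side becomes $\sum_{j=2}^{m}cf^{j-1}=\sum_{i=1}^{m-1}cf^{i}$ after the index shift $i=j-1$. Rearranging yields precisely the claim $af^{m}=af+\sum_{i=1}^{m-1}cf^{i}$.

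The base case $m=1$ is automatic: the sum on the right is empty and both sides equal $af$. There is essentially no obstacle; the only thing to watch for is that the index shift $j\mapsto i=j-1$ is handled correctly and that the empty-sum convention is invoked for $m=1$. (As a sanity check, one can alternatively close the sum via the geometric formula $\sum_{i=1}^{m-1}f^{i}=f(f^{m-1}-1)/(f-1)$ and use $c=a(f-1)$ to collapse it to $af^{m}-af$; this gives the same conclusion.)
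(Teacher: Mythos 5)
Your telescoping argument is correct: multiplying $af=c+a$ by $f^{j-1}$ and summing over $j=2,\dots,m$ does collapse to the stated identity, with the empty-sum convention handling $m=1$. This is essentially the same approach as the paper, which simply asserts the lemma follows by induction on $m$ from the same one-step relation $af^{j}=cf^{j-1}+af^{j-1}$; your telescoping sum is just that induction unrolled.
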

or, equivalently, using the index shift identity,
$$af^{m}=af+\sum_{i=0}^{m-2}cf^{m-i-1}\,.$$

The Lemma is easily proved by the application of mathematical induction on $m$.

\bigskip

Choosing $a=-1/(\mu+\nu)$, $c=1/\nu$ and $f=-\mu/\nu$ in Lemma~\ref{thm.amfvtaz} gives the partial fraction decomposition
\begin{equation}\label{equ.xbugcw0}
( - 1)^{m - 1} \frac{{\mu ^m }}{{\nu ^m (\nu  + \mu )}} = \frac{\mu }{{\nu (\nu  + \mu )}} + \sum_{i = 1}^{m - 1} {( - 1)^i \frac{{\mu ^i }}{{\nu ^{i + 1} }}}\,,
\end{equation}
which, after $n$ times differentiation with respect to $\mu$, yields, for \mbox{$m,n\in\mathbb{Z}$} such that $mn\in\mathbb{Z^-}$ or $m+n\in\mathbb{N}_0$, the identity
\begin{equation}\label{equ.tfna05t}
\begin{split}
&( - 1)^{m - 1} \sum_{p = 0}^n {\left\{ {( - 1)^p \binom mp\frac{{\mu ^{m - p} }}{{\nu ^m (\nu  + \mu )^{n - p + 1} }}} \right\}}\\
&\\
&\qquad =  - \frac{1}{{(\nu  + \mu )^{n + 1} }} + ( - 1)^n \sum_{i = n}^{m - 1} {\left\{ {( - 1)^i \binom in\frac{{\mu ^{i - n} }}{{\nu ^{i + 1} }}} \right\}}\,,
\end{split}
\end{equation}
from which upon summing from $\nu=1$ to $\nu=N$ we have
\begin{thm}\label{thm.x1nstmi}
For \mbox{$n,m\in\mathbb{Z}$} such that either $nm\le 0$ or \mbox{$n+m\in\mathbb{N}_0$} and $\mu\in\mathbb{C}\backslash\mathbb{Z^-}$ holds
\[
\begin{split}
&( - 1)^{m - 1} \sum_{p = 0}^n {\left\{ {( - 1)^p\binom mp \sum_{\nu  = 1}^N {\frac{{\mu ^{m - p} }}{{\nu ^m (\nu  + \mu )^{n - p + 1} }}} } \right\}}\\
&\\
&\qquad= H_{\mu ,n + 1}  - H_{N+\mu,n + 1}  + ( - 1)^n \sum_{i = n}^{m - 1} {\left\{ {( - 1)^i\binom in \mu^{i-n}H_{N,i + 1} } \right\}}\,,
\end{split}
\]

\end{thm}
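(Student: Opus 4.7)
My plan is to apply $\sum_{\nu=1}^N$ to both sides of the already-derived identity~\eqref{equ.tfna05t} and recognize every resulting $\nu$-sum as a harmonic-number expression. Since the text explicitly says ``upon summing from $\nu=1$ to $\nu=N$'', this is the intended route; no new analytic input is required beyond~\eqref{equ.tfna05t} and the defining identity~\eqref{equ.i4135lr}.

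Concretely, on the left-hand side of~\eqref{equ.tfna05t} every summation is finite, so I would interchange the outer $p$-sum with the inner $\nu$-sum; this reproduces the left-hand side of Theorem~\ref{thm.x1nstmi} verbatim. For the right-hand side, the first piece
\[
-\sum_{\nu=1}^N \frac{1}{(\nu+\mu)^{n+1}}
\]
collapses by~\eqref{equ.i4135lr} (with $t=n+1$) to $H_{\mu,n+1}-H_{N+\mu,n+1}$. In the second piece, the factor $(-1)^i\binom{i}{n}\mu^{i-n}$ is independent of $\nu$ and can be pulled outside the $\nu$-summation, leaving $\sum_{\nu=1}^N \nu^{-(i+1)}=H_{N,i+1}$ by the very definition of the generalized harmonic number. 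Reassembling the three pieces yields the stated identity.

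The only bookkeeping concern — and essentially the only obstacle — is the hypothesis ``$nm\le 0$ or $n+m\in\mathbb{N}_0$''. This condition is precisely what was needed for~\eqref{equ.tfna05t} itself to be meaningful after the $n$-fold differentiation of~\eqref{equ.xbugcw0}, and it determines when the formal sums $\sum_{p=0}^n$ and $\sum_{i=n}^{m-1}$ must be read through the reversed-limit convention $\sum_{i=a}^b=-\sum_{i=b+1}^{a-1}$ from the preliminaries. Since this convention is applied uniformly to both sides, the termwise summation in $\nu$ respects it automatically, and the assumption $\mu\notin\mathbb{Z}^-$ guarantees that no denominator $\nu+\mu$ vanishes for $1\le \nu\le N$. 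Thus, beyond careful tracking of sign and limit conventions, I foresee no genuine difficulty: the theorem is essentially a one-step consequence of~\eqref{equ.tfna05t} and~\eqref{equ.i4135lr}.
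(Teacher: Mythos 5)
Your proposal is correct and is exactly the paper's own route: the theorem is obtained by summing identity~\eqref{equ.tfna05t} over $\nu$ from $1$ to $N$, using~\eqref{equ.i4135lr} to evaluate $-\sum_{\nu=1}^N(\nu+\mu)^{-(n+1)}$ as $H_{\mu,n+1}-H_{N+\mu,n+1}$ and recognizing $\sum_{\nu=1}^N\nu^{-(i+1)}=H_{N,i+1}$. Your additional remarks on the summation-limit conventions and the hypothesis on $m,n$ are consistent with the paper's stated conventions and add nothing that conflicts with its argument.
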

which in the limit $N\to\infty$ gives
\begin{cor}\label{thm.s2t7fyj}
For \mbox{$m\in\mathbb{Z^+}$} and \mbox{$n\in\mathbb{N}_0$}, $\mu\in\mathbb{C}\backslash\mathbb{Z^-}$ holds
\[
\begin{split}
&( - 1)^{m - 1} \sum_{p = 0}^n {\left\{ {( - 1)^p\binom mp \sum_{\nu  = 1}^\infty {\frac{{\mu ^{m - p} }}{{\nu ^m (\nu  + \mu )^{n - p + 1} }}} } \right\}}\\
&\\
&\qquad= H_{\mu ,n + 1}  + ( - 1)^n \sum_{i = n+1}^{m - 1} {\left\{ {( - 1)^i\binom in \mu^{i-n}\zeta({i + 1}) } \right\}}\,.
\end{split}
\]

In particular, for $m,n\in\mathbb{Z^+}$ and $\mu\in\mathbb{C}\backslash\mathbb{Z^-}$, we have
\begin{equation}\label{equ.k9w7fxv}
( - 1)^{m - 1}  \sum_{\nu  = 1}^\infty {\frac{ \mu ^m}{{\nu ^m (\nu  + \mu )}}}  =  H_\mu   + \sum_{i = 1}^{m - 1} {( - 1)^i \mu ^i \zeta(i + 1) }\,,
\end{equation}

\[
\sum_{p = 0}^n {\left\{ {( - 1)^p\binom np \sum_{\nu  = 1}^\infty {\frac{{\mu ^p }}{{\nu ^n (\nu  + \mu )^{p + 1} }}} } \right\}}=\zeta(n+1)-H_{\mu ,n + 1}
\]
and
\[
\sum_{p = 1}^n {\left\{ {( - 1)^{p-1} \binom np\sum_{\nu  = 1}^\infty  {\frac{{\mu ^p }}{{\nu ^n (\nu  + \mu )^p }}} } \right\}}  = H_{\mu ,n}\,. 
\]

\end{cor}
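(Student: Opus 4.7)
The plan is to derive the identity as a direct specialization of Corollary~\ref{thm.s2t7fyj} itself, rather than attacking the series sum afresh. The preceding corollary gives a master identity depending on two indices $m$ and $n$, and the particular formula to be proved should fall out when the two are aligned so that the $\zeta$-tail collapses to an empty sum.

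Concretely, in the master identity of Corollary~\ref{thm.s2t7fyj} I would substitute $m \leftarrow n$ and, simultaneously, $n \leftarrow n-1$. Since $n\in\mathbb{Z^+}$ implies $m=n\in\mathbb{Z^+}$ and $n-1\in\mathbb{N}_0$, the hypotheses are satisfied. The right-hand side then becomes
\[
H_{\mu,n} + (-1)^{n-1}\sum_{i=n}^{n-1}\left\{(-1)^i\binom{i}{n-1}\mu^{i-n+1}\zeta(i+1)\right\},
\]
and the latter sum is empty, so the right-hand side is simply $H_{\mu,n}$. The left-hand side becomes
\[
(-1)^{n-1}\sum_{p=0}^{n-1}\left\{(-1)^p\binom{n}{p}\sum_{\nu=1}^\infty\frac{\mu^{n-p}}{\nu^n(\nu+\mu)^{n-p}}\right\}.
\]

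Next I would apply the index-shift identity \eqref{equ.mjf11t4} to the outer sum with $u=n$, sending $p\mapsto n-p$ so that the new summation index runs from $1$ to $n$. This turns $\mu^{n-p}/(\nu+\mu)^{n-p}$ into $\mu^p/(\nu+\mu)^p$, turns $\binom{n}{p}$ into $\binom{n}{n-p}=\binom{n}{p}$, and turns $(-1)^{n-1}(-1)^{n-p}$ into $(-1)^{2n-p-1}=(-1)^{p-1}$. After collecting these, the left-hand side becomes exactly the asserted expression $\sum_{p=1}^n(-1)^{p-1}\binom{n}{p}\sum_{\nu=1}^\infty\mu^p/(\nu^n(\nu+\mu)^p)$, completing the proof.

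There is no real analytic obstacle here, since the heavy lifting—differentiating the partial fraction \eqref{equ.xbugcw0}, summing over $\nu$, and passing to the limit $N\to\infty$—was already performed in Theorem~\ref{thm.x1nstmi} and its Corollary. The only thing that requires care is bookkeeping: tracking the two parities $(-1)^{n-1}$ and $(-1)^{n-p}$ after the index shift, and confirming that the range $i=n,\dots,n-1$ is genuinely empty under our conventions (it is, by the remark following \eqref{equ.s2fevcr}). So the main (and mildly non-obvious) step is simply to spot the right specialization $m=n$, $n\to n-1$; the rest is routine reindexing.
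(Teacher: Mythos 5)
There is a genuine gap: you have proved only a fragment of the statement, and the part you do prove rests on the part you omit. The Corollary's main claim is the master identity
\[
( - 1)^{m - 1} \sum_{p = 0}^n {( - 1)^p\binom mp \sum_{\nu  = 1}^\infty {\frac{{\mu ^{m - p} }}{{\nu ^m (\nu  + \mu )^{n - p + 1} }}} }
= H_{\mu ,n + 1}  + ( - 1)^n \sum_{i = n+1}^{m - 1} {( - 1)^i\binom in \mu^{i-n}\zeta({i + 1}) }\,,
\]
and your proposal opens by declaring that you will ``derive the identity as a direct specialization of Corollary~\ref{thm.s2t7fyj} itself.'' That is circular as a proof of the Corollary: the master identity is never established. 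The paper obtains it by letting $N\to\infty$ in Theorem~\ref{thm.x1nstmi}; the one point needing care there is that the $i=n$ term of the finite sum, which equals $H_{N,n+1}$, combines with the $-H_{N+\mu,n+1}$ term so that their difference tends to $0$ (this matters especially for $n=0$, where each of $H_{N,1}$ and $H_{N+\mu,1}$ diverges separately), which is why the surviving sum starts at $i=n+1$ and produces the $\zeta(i+1)$ values. None of this appears in your write-up. You also do not address the first two particular cases, \eqref{equ.k9w7fxv} (the case $n=0$) and the identity $\sum_{p=0}^n(-1)^p\binom np\sum_\nu \mu^p/(\nu^n(\nu+\mu)^{p+1})=\zeta(n+1)-H_{\mu,n+1}$ (the case $m=n$, which additionally requires the convention $\sum_{i=n+1}^{n-1}f_i=-f_n$).

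What you did carry out — specializing $m\leftarrow n$, $n\leftarrow n-1$, noting that $\sum_{i=n}^{n-1}$ is empty, and reindexing $p\mapsto n-p$ with the sign bookkeeping $(-1)^{n-1}(-1)^{n-p}=(-1)^{p-1}$ — is correct and is indeed how the third particular case follows from the master identity. But as a proof of the stated Corollary it is incomplete: you need to first derive the master identity from Theorem~\ref{thm.x1nstmi} (or from \eqref{equ.tfna05t} directly) before any specialization is legitimate.
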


\bigskip

Replacing $\nu$ by $-\nu$ in~\eqref{equ.tfna05t} and summing over $\nu$ gives
\begin{thm}
For \mbox{$m\in\mathbb{Z}$} and \mbox{$n\in\mathbb{N}_0$}, \mbox{$1\le N<\mu$} holds
\[
\begin{split}
&\sum_{p = 0}^n {\left\{ {( - 1)^p\binom mp \sum_{\nu  = 1}^N {\frac{{\mu ^{m - p} }}{{\nu ^m (\mu  - \nu )^{n - p + 1} }}} } \right\}}\\
&\\
&\qquad= H_{\mu-1 ,n + 1}  - H_{\mu-N-1,n + 1}  + ( - 1)^n \sum_{i = n}^{m - 1} {\left\{ {\binom in \mu^{i-n}H_{N,i + 1} } \right\}}\,.
\end{split}
\]

\end{thm}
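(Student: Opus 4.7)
The approach is exactly the one signaled in the text: start from identity~\eqref{equ.tfna05t}, perform the substitution $\nu\mapsto -\nu$, multiply through by $-1$ to clean up the overall sign, and then sum from $\nu=1$ to $\nu=N$, handling the $(\mu-\nu)^{n+1}$ term with the analog identity~\eqref{equ.s2fevcr} and the $\nu^{i+1}$ terms with the definition of $H_{N,i+1}$. Since the range $1\le N<\mu$ avoids the singularity at $\nu=\mu$, every sum in sight is finite and the manipulations are valid term by term.

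In more detail, the plan is as follows. First I would substitute $\nu\mapsto -\nu$ in~\eqref{equ.tfna05t}, carefully tracking the factors $(-\nu)^m=(-1)^m\nu^m$ and $(-\nu)^{i+1}=(-1)^{i+1}\nu^{i+1}$ that arise from the $\nu$-powers, while $(\nu+\mu)^{\,\cdot}$ becomes $(\mu-\nu)^{\,\cdot}$. On the left, the factor $(-1)^{m-1}\cdot(-1)^{-m}=-1$ collapses all the $m$-dependent signs to a single overall minus sign. On the right, the $-1/(\nu+\mu)^{n+1}$ term becomes $-1/(\mu-\nu)^{n+1}$, and the inner sign $(-1)^i/(-1)^{i+1}=-1$ again gives a single overall minus sign in front of the $i$-sum. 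After multiplying both sides by $-1$, I get the identity
\[
\sum_{p=0}^n (-1)^p\binom{m}{p}\frac{\mu^{m-p}}{\nu^m(\mu-\nu)^{n-p+1}}=\frac{1}{(\mu-\nu)^{n+1}}+(-1)^n\sum_{i=n}^{m-1}\binom{i}{n}\frac{\mu^{i-n}}{\nu^{i+1}}\,,
\]
with the signs $(-1)^p\binom{m}{p}$ on the left matching the theorem exactly and the binomials $\binom{i}{n}$ on the right losing the $(-1)^i$ factor — precisely the form required.

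Second, I would sum both sides from $\nu=1$ to $\nu=N$, with the hypothesis $1\le N<\mu$ ensuring no term blows up. On the right, $\sum_{\nu=1}^N (\mu-\nu)^{-(n+1)}$ is $H_{\mu-1,n+1}-H_{\mu-N-1,n+1}$ by~\eqref{equ.s2fevcr}, while $\sum_{\nu=1}^N \nu^{-(i+1)}=H_{N,i+1}$ by definition. Substituting these and observing that the $\nu$-summation on the left is the object we wanted to express, one reads off the claimed formula directly.

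The main obstacle is simply bookkeeping of signs: tracking how the overall factor $(-1)^{m-1}$, the internal factor $(-1)^p$, and the parity of the powers $\nu^m$ and $\nu^{i+1}$ conspire to leave the sign pattern $(-1)^p\binom{m}{p}$ on the left and a \emph{positive} $\binom{i}{n}$ (multiplied only by the external $(-1)^n$) on the right. There is no analytic subtlety — the hypothesis $N<\mu$ makes everything an identity of finite rational sums in $\mu$, so no convergence or limit argument is required beyond what is already built into~\eqref{equ.s2fevcr}.
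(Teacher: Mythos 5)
Your proposal is correct and follows exactly the route the paper itself takes: substitute $\nu\mapsto-\nu$ in~\eqref{equ.tfna05t}, sum from $\nu=1$ to $N$, and invoke~\eqref{equ.s2fevcr} for the $(\mu-\nu)^{-(n+1)}$ term. Your sign bookkeeping (the collapse of $(-1)^{m-1}(-1)^{-m}$ to $-1$ on the left and of $(-1)^i/(-1)^{i+1}$ to $-1$ on the right) checks out and reproduces the stated identity exactly.
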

Differentiating~\eqref{equ.xbugcw0} $n$ times with respect to $\nu$, we obtain, for $m\in\mathbb{Z^+}$ and $n\in\mathbb{N}_0$, the identity
\begin{equation}\label{equ.amxnxuz}
\begin{split}
&( - 1)^m \sum_{p = 0}^n {\left\{ {\binom{m+p-1}p\frac{{\mu ^{m + 1} }}{{\nu ^{m + p} (\nu  + \mu )^{n - p + 1} }}} \right\}}\\
&\\
&\qquad = \frac{\mu }{{(\nu  + \mu )^{n + 1} }} + \sum_{i = 1}^m {\left\{ {( - 1)^i \binom{i+n-1}n\frac{{\mu ^i }}{{\nu ^{i + n} }}} \right\}}\,,
\end{split}
\end{equation}
from which upon summing from $\nu=1$ to $\nu=N$ we have
\begin{thm}
For $m\in\mathbb{Z}$, $\mu\in\mathbb{C}\backslash\mathbb{Z^-}$ and $n\in\mathbb{N}_0$ holds
\[
\begin{split}
&( - 1)^m \sum_{p = 0}^n {\left\{ {\binom{m+p-1}p\sum_{\nu  = 1}^N {\frac{{\mu ^{m + 1} }}{{\nu ^{m + p} (\nu  + \mu )^{n - p + 1} }}} } \right\}}\\
&\quad = \mu H_{N + \mu ,n + 1}  - \mu H_{N,n + 1}\\
&\qquad - \mu H_{\mu ,n + 1}\\
&\qquad\quad + \sum_{i = 2}^m {( - 1)^i \binom{i+n-1}n\mu^iH_{N,i + n} }\,,
\end{split}
\]

\end{thm}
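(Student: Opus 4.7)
The plan is to take identity \eqref{equ.amxnxuz}, which has already been established by differentiating the partial fraction \eqref{equ.xbugcw0} $n$ times with respect to $\nu$, and simply sum both sides from $\nu=1$ to $\nu=N$. Since the left-hand side of \eqref{equ.amxnxuz} depends on $\nu$ only through the factor inside the bracket $\{\cdots\}$, the summation can be pulled inside the $p$-sum and produces exactly the left-hand side of the theorem. So all the content is in handling the right-hand side of \eqref{equ.amxnxuz} term by term.

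The first term on the right of \eqref{equ.amxnxuz} is $\mu/(\nu+\mu)^{n+1}$. Summing over $\nu$ and invoking the basic identity \eqref{equ.i4135lr} with $t=n+1$ gives
\[
\sum_{\nu=1}^{N}\frac{\mu}{(\nu+\mu)^{n+1}} = \mu\bigl[H_{N+\mu,n+1}-H_{\mu,n+1}\bigr],
\]
accounting for the first two harmonic-number terms on the right-hand side of the claim. For the remaining sum, I would interchange the order of summation and recognise the inner sum as a generalised harmonic number, namely
\[
\sum_{\nu=1}^{N}\frac{1}{\nu^{i+n}} = H_{N,i+n},
\]
yielding $\sum_{i=1}^{m}(-1)^{i}\binom{i+n-1}{n}\mu^{i}H_{N,i+n}$.

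The only subtlety is that this last sum still runs from $i=1$ to $m$, whereas the statement of the theorem has a sum starting at $i=2$. So the next step is to peel off the $i=1$ contribution, for which $\binom{n}{n}=1$ and the sign is $-1$; this contributes exactly $-\mu H_{N,n+1}$, which combines with the earlier pieces to match the stated right-hand side. After this extraction, what remains is the sum $\sum_{i=2}^{m}(-1)^{i}\binom{i+n-1}{n}\mu^{i}H_{N,i+n}$, as required.

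There is no real obstacle here; the hardest part is merely bookkeeping. One should note that the conditions $m\in\mathbb{Z}$, $n\in\mathbb{N}_0$ and $\mu\in\mathbb{C}\setminus\mathbb{Z}^-$ are precisely those required in \eqref{equ.amxnxuz} for the binomial coefficients $\binom{m+p-1}{p}$ and $\binom{i+n-1}{n}$ to be well-defined and for the sums $H_{N+\mu,n+1}$ and $H_{\mu,n+1}$ to avoid singularities, so no further hypotheses need to be imposed.
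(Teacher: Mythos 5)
Your proposal is correct and follows exactly the paper's own route: the paper obtains this theorem by summing identity \eqref{equ.amxnxuz} from $\nu=1$ to $\nu=N$, applying \eqref{equ.i4135lr} to the $\mu/(\nu+\mu)^{n+1}$ term, and absorbing the $i=1$ term of the remaining sum as $-\mu H_{N,n+1}$, just as you describe. No gaps.
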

which in the limit $N\to\infty$ gives
\begin{cor}\label{thm.ztqq3ah}
For $m\in\mathbb{Z^+}$, $\mu\in\mathbb{C}\backslash\mathbb{Z^-}$ and $n\in\mathbb{N}_0$ holds
\[
\begin{split}
&( - 1)^m \sum_{p = 0}^n {\left\{ {\binom{m+p-1}p\sum_{\nu  = 1}^\infty  {\frac{{\mu ^{m + 1} }}{{\nu ^{m + p} (\nu  + \mu )^{n - p + 1} }}} } \right\}}\\
&\\
&\qquad =  - \mu H_{\mu ,n + 1}  + \sum_{i = 2}^m {( - 1)^i \binom{i+n-1}n\mu ^i \zeta (n + i)}\,.
\end{split}
\]

\end{cor}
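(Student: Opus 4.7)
The plan is to obtain the corollary by passing to the limit $N\to\infty$ in the immediately preceding theorem. First I would verify that the left-hand side of that theorem converges termwise: each inner sum
\[
\sum_{\nu=1}^\infty \frac{\mu^{m+1}}{\nu^{m+p}(\nu+\mu)^{n-p+1}}
\]
has total denominator exponent $m+n+1\ge 2$ and its $\nu$-exponent is at least $m\ge 1$, so the series converges absolutely for every $p\in\{0,1,\dots,n\}$ and $\mu\in\mathbb{C}\backslash\mathbb{Z^-}$. This legitimises moving the limit inside the outer finite sum over $p$, and it means all the work is on the right-hand side.

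Next I would handle the two $N$-dependent blocks on the right. For the term $\sum_{i=2}^m (-1)^i\binom{i+n-1}{n}\mu^i H_{N,i+n}$, each index satisfies $i+n\ge 2$ (since $i\ge 2$ and $n\ge 0$), so by~\eqref{equ.nmhpf42} we have $H_{N,i+n}\to\zeta(i+n)$, giving the second piece of the claimed identity directly. The piece $-\mu H_{\mu,n+1}$ is $N$-independent and survives unchanged.

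The remaining, and only nontrivial, contribution is $\mu(H_{N+\mu,n+1}-H_{N,n+1})$, which I would show tends to $0$. For $n\ge 1$, both $H_{N+\mu,n+1}$ and $H_{N,n+1}$ converge to $\zeta(n+1)$ by~\eqref{equ.nmhpf42} and the difference vanishes. For $n=0$ each individual harmonic number diverges, so this is the step that needs a little care: one must use that $H_{N+\mu,1}-H_{N,1}$, viewed through the Hurwitz-zeta relation displayed after~\eqref{equ.k6u5u0q} (or equivalently as $\psi(N+\mu+1)-\psi(N+1)$), still tends to $0$ as $N\to\infty$ because $H_{N+\mu,1}-H_{N,1}=H_{N+\mu,1}-H_N\to \zeta(1,1)-\zeta(1,\mu+1)$ after formal cancellation, and the relevant tail is $O(1/N)$ for fixed $\mu$.

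Collecting these three pieces, the right-hand side of the preceding theorem reduces in the limit $N\to\infty$ to $-\mu H_{\mu,n+1}+\sum_{i=2}^m(-1)^i\binom{i+n-1}{n}\mu^i\zeta(n+i)$, which is exactly the stated identity. The main obstacle, as noted, is the $n=0$ case of the cancellation between the two divergent harmonic numbers; once that is dispatched by the digamma (or telescoping) argument, everything else is a direct application of~\eqref{equ.nmhpf42}.
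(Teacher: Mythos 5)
Your proposal is correct and follows exactly the paper's route: the corollary is obtained by letting $N\to\infty$ in the immediately preceding theorem, with $H_{N,i+n}\to\zeta(i+n)$ and the cancellation $\mu\left(H_{N+\mu,n+1}-H_{N,n+1}\right)\to 0$. Your treatment of the $n=0$ case is the right idea (the difference is $\psi(N+\mu+1)-\psi(N+1)=O(1/N)$), though the intermediate phrase invoking $\zeta(1,\cdot)$ ``after formal cancellation'' should be dropped since those quantities individually diverge; the paper itself supplies no more detail than ``in the limit $N\to\infty$.''
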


\bigskip

Changing $\nu$ to $-\nu$ in~\eqref{equ.amxnxuz} and summing over $\nu$ gives
\begin{thm}
For $m\in\mathbb{Z}$ and \mbox{$n\in\mathbb{N}_0$}, \mbox{$1\le N<\mu$} holds
\[
\begin{split}
&\sum_{p = 0}^n {\left\{ {\binom{m+p-1}p(-1)^p\sum_{\nu  = 1}^N {\frac{{\mu ^{m + 1} }}{{\nu ^{m + p} (\mu  - \nu )^{n - p + 1} }}} } \right\}}\\
&\quad = \mu H_{\mu - 1 ,n + 1}  - \mu H_{\mu - N - 1,n + 1}\\
&\qquad\quad + (-1)^n\sum_{i = 1}^m {\binom{i+n-1}n\mu^iH_{N,i + n} }\,.
\end{split}
\]

\end{thm}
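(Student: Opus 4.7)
The plan is to follow the hint in the preamble: take identity~\eqref{equ.amxnxuz} and apply the substitution $\nu \mapsto -\nu$, then sum from $\nu=1$ to $\nu=N$ using the analog~\eqref{equ.s2fevcr} of~\eqref{equ.i4135lr}. No new machinery is needed beyond careful bookkeeping of signs.

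First I would make the substitution $\nu \mapsto -\nu$ in~\eqref{equ.amxnxuz}. The denominators $(\nu+\mu)^{n-p+1}$ and $(\nu+\mu)^{n+1}$ become $(\mu-\nu)^{n-p+1}$ and $(\mu-\nu)^{n+1}$, while $\nu^{m+p}$ picks up a factor $(-1)^{m+p}$ and $\nu^{i+n}$ picks up $(-1)^{i+n}$. On the left-hand side the prefactor $(-1)^m$ combines with $(-1)^{-(m+p)} = (-1)^{m+p}$ to give $(-1)^{p}$, exactly the sign appearing in the statement. On the right-hand side, the first term loses only its denominator's structure (no sign change since $(\mu-\nu)^{n+1}$ absorbs cleanly), while the sum over $i$ acquires $(-1)^i \cdot (-1)^{-(i+n)} = (-1)^{n}$, matching the $(-1)^n$ in front of the closing sum.

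Next I would sum the transformed identity over $\nu = 1, \ldots, N$. The hypothesis $1 \le N < \mu$ ensures every denominator $(\mu-\nu)$ is nonzero so the rearrangement is legitimate. The left-hand side immediately gives the double sum in the statement after interchanging the inner $\nu$-sum with the finite $p$-sum. The two pieces on the right are evaluated using standard identities: applying~\eqref{equ.s2fevcr} yields
\[
\sum_{\nu=1}^N \frac{\mu}{(\mu-\nu)^{n+1}} = \mu\bigl(H_{\mu-1,n+1} - H_{\mu-N-1,n+1}\bigr),
\]
which produces the first line of the right-hand side, and the trivial telescope $\sum_{\nu=1}^N \nu^{-(i+n)} = H_{N,i+n}$ handles the second sum, reproducing $(-1)^n\sum_{i=1}^m \binom{i+n-1}{n}\mu^i H_{N,i+n}$.

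The only genuinely delicate step is tracking the signs cleanly; all three places where $(-1)$'s must collapse (the prefactor $(-1)^m$, the parity of $\nu^{m+p}$, and the parity of $\nu^{i+n}$) need to be cross-checked to ensure the claimed $(-1)^p$ on the left and $(-1)^n$ on the right. Once these are confirmed, the identity follows by a direct comparison of terms, with no residual boundary contribution since the index range of $p$ and $i$ is unchanged by the substitution. The case restriction $m\in\mathbb{Z}$, $n\in\mathbb{N}_0$ is inherited verbatim from~\eqref{equ.amxnxuz}.
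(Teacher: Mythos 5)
Your proposal is correct and follows exactly the paper's own route: the paper derives this theorem precisely by changing $\nu$ to $-\nu$ in~\eqref{equ.amxnxuz} and summing over $\nu$ from $1$ to $N$ via~\eqref{equ.s2fevcr}, and your sign bookkeeping (the prefactor $(-1)^m$ combining with $(-1)^{-(m+p)}$ to give $(-1)^p$, and $(-1)^i(-1)^{-(i+n)}$ giving $(-1)^n$) checks out. The only cosmetic remark is that $\sum_{\nu=1}^N \nu^{-(i+n)} = H_{N,i+n}$ is simply the definition of the generalized harmonic number rather than a telescope.
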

\bigskip

Using $a=1/{\mu\nu}$, $c=-1/\left({\mu(\mu+\nu)}\right)$ and $f=\mu/{(\mu+\nu)}$ in Lemma~\ref{thm.amfvtaz} gives the identity
\[
\frac{{\mu ^{m} }}{{\nu (\nu  + \mu )^{m} }} = \frac 1\nu- \sum_{i = 1}^m {\frac{{\mu ^{i-1} }}{{(\nu  + \mu )^{i} }}}\,, 
\]
which, after $n$ differentiations with respect to $\nu$, gives
\begin{equation}\label{equ.pkqyaem}
\begin{split}
&\sum_{p = 0}^n {\binom{m+n-p-1}{m-1}\frac{{\mu ^m }}{{\nu ^{p + 1} (\nu  + \mu )^{m + n - p} }}}\\
&\qquad  = \frac{1}{{\nu ^{n + 1} }} - \sum_{i = 1}^m {\binom{i+n-1}n\frac{{\mu ^{i - 1} }}{{(\nu  + \mu )^{i + n} }}}\,,
\end{split}
\end{equation}
from which we get, after summing over $\nu$,
\begin{thm}
For $m\in\mathbb{Z^+}$, $\mu\in\mathbb{C}\backslash\mathbb{Z^-}$ and $n\in\mathbb{N}_0$ holds
\[
\begin{split}
&\sum_{p = 0}^n {\left\{ {\binom{m+n-p-1}{m-1}\sum_{\nu  = 1}^N {\frac{{\mu ^m }}{{\nu ^{p + 1} (\nu  + \mu )^{m + n - p} }}} } \right\}}\\
&\quad\qquad= H_{N,n + 1}  - \sum_{i = 1}^m {\binom{i+n-1}n\mu ^{i - 1} H_{N + \mu ,i + n} }  + \sum_{i = 1}^m {\binom{i+n-1}n\mu ^{i - 1} H_{\mu ,i + n} }\,,
\end{split}
\]
\end{thm}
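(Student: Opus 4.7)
The plan is to deduce the theorem from identity~\eqref{equ.pkqyaem} by a direct termwise summation. Specifically, I would take~\eqref{equ.pkqyaem}, namely
\[
\sum_{p = 0}^n \binom{m+n-p-1}{m-1}\frac{\mu^m}{\nu^{p+1}(\nu+\mu)^{m+n-p}} = \frac{1}{\nu^{n+1}} - \sum_{i = 1}^m \binom{i+n-1}{n}\frac{\mu^{i-1}}{(\nu+\mu)^{i+n}}\,,
\]
and sum both sides from $\nu=1$ to $\nu=N$, then interchange each finite sum over $\nu$ with the outer sums over $p$ (on the left) and over $i$ (on the right). Every sum in sight is finite, so this reordering is unconditionally legitimate and turns the left-hand side into exactly the double sum that appears in the theorem.

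Next I would identify each of the resulting $\nu$-sums on the right in terms of generalized harmonic numbers. The first is simply $\sum_{\nu=1}^N \nu^{-(n+1)}=H_{N,n+1}$ by definition. The $\nu$-sums $\sum_{\nu=1}^N(\nu+\mu)^{-(i+n)}$ that appear inside the second block on the right are handled by the telescoping identity~\eqref{equ.i4135lr}, which gives $\sum_{\nu=1}^N(\nu+\mu)^{-t}=H_{N+\mu,t}-H_{\mu,t}$; taking $t=i+n$ and splitting the resulting difference into two separate sums over $i$ produces precisely the two finite sums $-\sum_{i=1}^m\binom{i+n-1}{n}\mu^{i-1}H_{N+\mu,i+n}+\sum_{i=1}^m\binom{i+n-1}{n}\mu^{i-1}H_{\mu,i+n}$ displayed in the theorem. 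On the left I would leave the $\nu$-sums in their unexchanged form because the theorem is stated in that way.

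There is essentially no obstacle to surmount: the statement is a direct corollary of~\eqref{equ.pkqyaem} combined with~\eqref{equ.i4135lr}. The only hypothesis that plays a substantive role in the summation step is $\mu\in\mathbb{C}\setminus\mathbb{Z}^-$, which is exactly what guarantees that $\nu+\mu\ne 0$ for every $\nu\in\{1,\dots,N\}$, so that each finite $\nu$-sum is well-defined and all the manipulations are unconditionally valid; the conditions $m\in\mathbb{Z}^+$ and $n\in\mathbb{N}_0$ are inherited directly from~\eqref{equ.pkqyaem}, whose derivation via Lemma~\ref{thm.amfvtaz} and $n$-fold $\nu$-differentiation requires precisely these restrictions.
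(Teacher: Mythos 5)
Your proposal is correct and follows exactly the paper's route: the theorem is obtained by summing identity~\eqref{equ.pkqyaem} over $\nu$ from $1$ to $N$ and evaluating the resulting finite sums via the definition of $H_{N,n+1}$ and the telescoping identity~\eqref{equ.i4135lr}. Nothing further is needed.
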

which in the limit $N\to\infty$ gives 
\begin{cor}\label{thm.cwe2ocv}
For $\mu\in\mathbb{C}\backslash\mathbb{Z^-}$ and $m,n\in\mathbb{N}_0$ holds
\[
\begin{split}
&\sum_{p = 0}^n {\left\{ {\binom{m+p}{m}\sum_{\nu  = 1}^\infty {\frac{{\mu ^{m+1} }}{{\nu ^{n-p + 1} (\nu  + \mu )^{m + p +1} }}} } \right\}}\\
&\quad\qquad=\sum_{i = n}^{m+n} {\binom in\mu ^{i - n} H_{\mu ,i + 1} }  -\sum_{i = n+1}^{m+n} {\binom in\mu ^{i - n} \zeta(i + 1) }\,.
\end{split}
\]
\end{cor}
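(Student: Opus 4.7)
\emph{Proof plan.} The corollary is the $N\to\infty$ limit of the immediately preceding theorem, after a parameter substitution and a reindexing. I will therefore proceed in three steps: substitute, reindex, and pass to the limit.

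First I would set $m\mapsto m+1$ in the preceding theorem. This extends its range from $m\in\mathbb{Z^+}$ to $m\in\mathbb{N}_0$ as required, and turns the left side into
\[
\sum_{p=0}^n\binom{m+n-p}{m}\sum_{\nu=1}^N\frac{\mu^{m+1}}{\nu^{p+1}(\nu+\mu)^{m+n-p+1}}\,.
\]
Applying the index-shift identity \eqref{equ.mjf11t4} with $a=0$, $b=n$, $u=n$ to the outer $p$-summation (that is, substituting $p\mapsto n-p$) sends $\binom{m+n-p}{m}\mapsto\binom{m+p}{m}$ and $\nu^{p+1}(\nu+\mu)^{m+n-p+1}\mapsto\nu^{n-p+1}(\nu+\mu)^{m+p+1}$, producing exactly the LHS of the corollary.

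Next I would take $N\to\infty$ on the corresponding right side. For $n\ge 1$, every $H_{N,n+1}$ and $H_{N+\mu,i+n}$ converges by \eqref{equ.nmhpf42} to its Riemann-zeta limit, giving
\[
\zeta(n+1)-\sum_{i=1}^{m+1}\binom{i+n-1}{n}\mu^{i-1}\zeta(i+n)+\sum_{i=1}^{m+1}\binom{i+n-1}{n}\mu^{i-1}H_{\mu,i+n}\,.
\]
The $i=1$ term of the middle sum is $\binom{n}{n}\mu^{0}\zeta(n+1)=\zeta(n+1)$, which cancels the isolated $\zeta(n+1)$. A reindexing $j=i+n-1$ in both surviving sums then delivers the ranges $j\in\{n+1,\dots,m+n\}$ and $j\in\{n,\dots,m+n\}$ that appear on the corollary's RHS.

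The main obstacle is the boundary case $n=0$, in which $H_{N,1}$ and the $i=1$ contribution $H_{N+\mu,1}$ individually diverge logarithmically rather than converging to finite zeta values. To handle it I would use \eqref{equ.i4135lr} at $t=1$ together with \eqref{equ.gbbnxxa} to show
\[
\lim_{N\to\infty}\bigl(H_{N}-H_{N+\mu,1}\bigr)=\lim_{N\to\infty}\Bigl(-H_\mu+\sum_{\nu=1}^N\tfrac{\mu}{\nu(\nu+\mu)}\Bigr)=0\,,
\]
so the two divergences cancel exactly in the manner of the $\zeta(n+1)$-cancellation above, and the leftover zeta-minus-harmonic combination agrees term-by-term with the corollary's RHS at $n=0$. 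With this analytic verification in place, the remaining steps are routine bookkeeping via the index-shift identity.
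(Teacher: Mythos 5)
Your proposal is correct and follows exactly the paper's route: the corollary is obtained from the preceding theorem by the substitution $m\mapsto m+1$, the reindexing $p\mapsto n-p$, and passage to the limit $N\to\infty$ with the reindex $j=i+n-1$ on the right-hand side. Your explicit treatment of the $n=0$ boundary case, where the cancellation $\lim_{N\to\infty}(H_N-H_{N+\mu,1})=0$ is needed, is a detail the paper leaves tacit, but the argument is the same.
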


In particular, for $m\in\mathbb{N}_0$ and $\mu\in\mathbb{C}\backslash\mathbb{Z^-}$, we have
\begin{equation}\label{equ.gqhxhjb}
\sum_{\nu  = 1}^\infty  {\frac{\mu ^{m + 1}}{{\nu (\nu  + \mu )^{m + 1} }}}  = H_\mu   - \sum_{i = 1}^m {\mu ^i \zeta (i + 1)}  + \sum_{i = 1}^m {\mu ^i H_{\mu ,i + 1} }\,.
\end{equation}

\bigskip

Replacing $\nu$ by $-\nu$ in~\eqref{equ.pkqyaem} and summing over $\nu$ we obtain
\begin{thm}
For $m\in\mathbb{Z}$ and \mbox{$n\in\mathbb{N}_0$}, \mbox{$1\le N<\mu$} holds
\[
\begin{split}
&\sum_{p=0}^n{\left\{(-1)^p\binom{m+n-p-1}{m-1}\sum_{\nu=1}^N{\frac {\mu^m}{\nu^{p+1}(\mu-\nu)^{m+n-p}}}\right\}}\\
&\quad=(-1)^nH_{N,n+1}+\sum_{i=1}^m{\binom{i+n-1}n\mu^{i-1}\left(H_{\mu-1,i+n}-H_{\mu-N-1,i+n}\right)}\,.
\end{split}
\]

\end{thm}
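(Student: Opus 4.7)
The plan is to start from the pointwise identity~\eqref{equ.pkqyaem}, apply the substitution $\nu\mapsto-\nu$ to turn every $(\nu+\mu)$ into $(\mu-\nu)$, simplify the resulting signs, and then sum over $\nu$ from $1$ to $N$ using~\eqref{equ.s2fevcr} to handle the $(\mu-\nu)$ tail.

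First I would perform the substitution in~\eqref{equ.pkqyaem}. Using $(-\nu)^{p+1}=(-1)^{p+1}\nu^{p+1}$, $(-\nu)^{n+1}=(-1)^{n+1}\nu^{n+1}$, and $-\nu+\mu=\mu-\nu$, the LHS picks up a factor $(-1)^{p+1}$ in each summand while the first RHS term picks up $(-1)^{n+1}$. Multiplying both sides by $-1$ and collecting the signs cleans the identity up to
\[
\sum_{p=0}^n (-1)^p\binom{m+n-p-1}{m-1}\frac{\mu^m}{\nu^{p+1}(\mu-\nu)^{m+n-p}}=\frac{(-1)^n}{\nu^{n+1}}+\sum_{i=1}^m\binom{i+n-1}{n}\frac{\mu^{i-1}}{(\mu-\nu)^{i+n}}\,.
\]

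Next I would sum this identity on both sides from $\nu=1$ to $\nu=N$, keeping $1\le N<\mu$ so that no pole is encountered in the $(\mu-\nu)$ factors. The LHS turns directly into the double sum appearing in the claim. On the RHS, the first term contributes $(-1)^nH_{N,n+1}$ by the very definition of the generalized harmonic number, and for the second term I would invoke~\eqref{equ.s2fevcr}, which gives $\sum_{\nu=1}^N (\mu-\nu)^{-(i+n)}=H_{\mu-1,i+n}-H_{\mu-N-1,i+n}$. Assembling the pieces yields exactly the asserted formula.

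The only non-routine step is the sign bookkeeping in the substitution: the parities of $p+1$, $n+1$, and the global factor $-1$ must line up so that the LHS binomial prefactor is $(-1)^p$ (not $(-1)^{p+1}$) and the first RHS term carries $(-1)^n$ (not $(-1)^{n+1}$). Once this is checked by a short parity computation, nothing else is delicate; all the heavy lifting is done by~\eqref{equ.pkqyaem} and~\eqref{equ.s2fevcr}, both of which are already established.
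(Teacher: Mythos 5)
Your proposal is correct and is precisely the paper's own argument: the paper obtains this theorem by replacing $\nu$ with $-\nu$ in~\eqref{equ.pkqyaem} and summing over $\nu$, with~\eqref{equ.s2fevcr} handling the $(\mu-\nu)$ terms exactly as you describe. Your sign bookkeeping (the overall factor $-1$ turning $(-1)^{p+1}$ into $(-1)^p$ and $(-1)^{n+1}$ into $(-1)^n$) checks out.
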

\section{Functional relations for the \mbox{Tornheim double series}}\label{sec.functional}
In this section we will derive various functional relations for the Tornheim double series and from these we will see that it is already possible to evaluate the series at certain arguments.

\bigskip

Writing~\eqref{equ.yav4944} as $$\frac 1{\mu+\nu}+\frac\mu{\nu(\mu+\nu)}=\frac 1\nu\,,$$ dividing through by \mbox{$\mu^r\nu^{s-1}(\mu+\nu)^t$} and summing over $\mu$ and $\nu$ gives 
\begin{equation}\label{equ.shguxqu}
T(r,s-1,t+1)+T(r-1,s,t+1)=T(r,s,t)\,,
\end{equation}
which is property~\eqref{equ.kllr3ob} of Tornheim series.

\bigskip

Replacing $\mu$ by $\mu-\nu$ in~\eqref{equ.hrp8apz} and then replacing $\nu$ by $-\nu$ in the resulting identity gives the following variant of~\eqref{equ.hrp8apz}, valid for $\mu,\nu\in\mathbb{C}\backslash\{0\}$ and \mbox{$s,t\in\mathbb{Z}$} such that either $st\in\mathbb{Z^-}$ or $s+t\in\mathbb{Z^+}$,
\[
\frac{1}{{\nu ^s \mu ^t }} = \sum_{i = 0}^{s - 1} {\binom{t+i-1}i\frac{1}{{\nu ^{s - i} (\mu  + \nu )^{t + i} }}}  + \sum_{i = 0}^{t - 1} {\binom{s+i-1}i\frac{1}{{\mu ^{t - i} (\mu  + \nu )^{s + i} }}}\,, 
\]
from which, after dividing through by $(\mu+\nu)^r$ and summing over $\mu$ and $\nu$, we get
\begin{thm}\label{equ.a1ft4e9}
For $r+s>1$, $r+t>1$ and $r+s+t>2$ holds
\begin{equation}
\begin{split}
T(s,t,r) &= \sum_{i = 0}^{s - 1} {\binom{t+i-1}iT(s - i,0,t + r + i)}  + \sum_{i = 0}^{t - 1} {\binom{s+i-1}iT(t - i,0,s + r + i)}\\
&\quad= \sum_{i = 1}^s {\binom{s+t-i-1}{t-1}T(i,0,s + r + t - i)}\\
&\qquad  + \sum_{i = 1}^t {\binom{s+t-i-1}{s-1}T(i,0,s + r + t - i)}\,,
\end{split}
\end{equation}

\end{thm}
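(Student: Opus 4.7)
The plan is to exploit the partial-fraction identity just derived in the excerpt,
\[
\frac{1}{\nu^s \mu^t} = \sum_{i=0}^{s-1}\binom{t+i-1}{i}\frac{1}{\nu^{s-i}(\mu+\nu)^{t+i}} + \sum_{i=0}^{t-1}\binom{s+i-1}{i}\frac{1}{\mu^{t-i}(\mu+\nu)^{s+i}}\,,
\]
valid for $\mu,\nu\in\mathbb{C}\backslash\{0\}$. The first step is to divide both sides by $(\mu+\nu)^r$, which turns the left-hand side into the summand of a Tornheim series with parameters $(s,t,r)$ and produces on the right terms of the form $\nu^{-(s-i)}(\mu+\nu)^{-(t+i+r)}$ and $\mu^{-(t-i)}(\mu+\nu)^{-(s+i+r)}$. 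Under the standing hypotheses $r+s>1$, $r+t>1$, $r+s+t>2$ one checks that each resulting double sum satisfies the Tornheim convergence criterion, so Fubini legitimises summing termwise over $\mu\ge 1$ and $\nu\ge 1$.

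Carrying out the summations produces $T(s,t,r)$ on the left and, on the right, the combination
\[
\sum_{i=0}^{s-1}\binom{t+i-1}{i}T(0,s-i,t+r+i) + \sum_{i=0}^{t-1}\binom{s+i-1}{i}T(t-i,0,s+r+i)\,.
\]
Applying the symmetry property~\eqref{equ.qg21457} to rewrite $T(0,s-i,t+r+i)=T(s-i,0,t+r+i)$ in the first family of terms then yields the first equality of the theorem exactly as stated.

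For the second equality, I would invoke the index-shift identity~\eqref{equ.mjf11t4}, substituting $j=s-i$ in the first sum and $j=t-i$ in the second sum. In the first sum this turns $\binom{t+i-1}{i}$ into $\binom{s+t-j-1}{s-j}$, which equals $\binom{s+t-j-1}{t-1}$ via the symmetric form $\binom{n}{k}=\binom{n}{n-k}$, while $T(s-i,0,t+r+i)$ becomes $T(j,0,s+r+t-j)$ and the range becomes $j=1,\dots,s$. The second sum transforms analogously, with the binomial coefficient becoming $\binom{s+t-j-1}{s-1}$, reproducing the stated second form.

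The only delicate point I anticipate is the bookkeeping: one must verify that the two simultaneous re-indexings produce matching binomial coefficients and Tornheim arguments, and that every individual $T(j,0,s+r+t-j)$ on the right is finite under the hypotheses (which it is, since each such term has $j\ge 1$ and the third argument is at least $r+1>0$, giving $j+(s+r+t-j)=s+r+t>2$ and the other convergence inequalities immediately). No deep idea is required beyond the partial-fraction expansion, termwise summation, and relabelling.
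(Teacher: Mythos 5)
Your proof is correct and takes essentially the same route as the paper: the paper likewise divides the variant partial-fraction identity $\frac{1}{\nu^s\mu^t}=\sum_{i=0}^{s-1}\binom{t+i-1}{i}\frac{1}{\nu^{s-i}(\mu+\nu)^{t+i}}+\sum_{i=0}^{t-1}\binom{s+i-1}{i}\frac{1}{\mu^{t-i}(\mu+\nu)^{s+i}}$ by $(\mu+\nu)^r$ and sums over $\mu$ and $\nu$, with the reflection symmetry and the index shift giving the two stated forms. Your added convergence bookkeeping is sound and only makes explicit what the paper leaves tacit.
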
 
without the need of the induction suggested in~\cite{huard}, where the identity of the theorem was first established.

\bigskip

The utility of Theorem~\ref{equ.a1ft4e9} lies in the fact that in discussing $T(r,s,t)$, it is sufficient to consider only the set $\left\{T(i,0,r+s+t-i)\right\}$ for $1\le i\le\max{\{s,t\}}$; a fact that was gainfully employed in~\cite{huard} and~\cite{espinosa2}.

\bigskip

Dividing through the identity of Corollary~\ref{thm.s2t7fyj} by $\mu^r$ and summing over $\mu$ we obtain
\begin{thm}\label{thm.isoq1ou}
For \mbox{$m,r-1\in\mathbb{Z^+}$}, \mbox{$n\in\mathbb{N}_0$} and $r>m-n$ holds
\[
\begin{split}
&( - 1)^{m - 1} \sum_{p = 0}^n {\left\{ {( - 1)^p \binom mpT(r - m + p,m,n - p + 1)} \right\}}\\
&\qquad= E(n + 1,r) + ( - 1)^n \sum_{i = n + 1}^{m - 1} {\left\{ {( - 1)^i \binom in\zeta (i + 1)\zeta (r - i + n)} \right\}}\,.
\end{split}
\]

\end{thm}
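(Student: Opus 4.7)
I would follow verbatim the recipe prefigured by the text: take the identity of Corollary~\ref{thm.s2t7fyj} (with $\mu$ specialized to a positive integer), divide both sides by $\mu^{r}$, and sum from $\mu=1$ to $\infty$. This is a routine but pleasingly direct calculation; the only substantive points are the identification of each resulting double sum and a check of convergence.

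On the left, multiplying the inner $\nu$-summand $\mu^{m-p}/(\nu^{m}(\nu+\mu)^{n-p+1})$ by $\mu^{-r}$ and summing over $\mu,\nu\ge 1$ yields exactly $T(r-m+p,\,m,\,n-p+1)$ for each $p$ in the range $0\le p\le n$. Hence the entire left-hand side becomes $(-1)^{m-1}\sum_{p=0}^{n}(-1)^{p}\binom{m}{p}\,T(r-m+p,\,m,\,n-p+1)$, matching the claim. The Tornheim-convergence criteria $(r-m+p)+(n-p+1)>1$ and $(r-m+p)+m+(n-p+1)>2$ both collapse, for every admissible $p$, to $r>m-n$, which is precisely the stated hypothesis.

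On the right, the leading term reduces to $\sum_{\mu\ge1}H_{\mu,n+1}/\mu^{r}=E(n+1,r)$ by the very definition of the linear Euler sum, while each summand of the finite tail produces $(-1)^{i}\binom{i}{n}\zeta(i+1)\sum_{\mu\ge1}\mu^{i-n-r}=(-1)^{i}\binom{i}{n}\zeta(i+1)\zeta(r-i+n)$. Convergence of the latter $\zeta$ follows from $i\le m-1$, which forces $r-i+n\ge r-m+n+1>1$. Preserving the overall prefactor $(-1)^{n}$ then reproduces the stated right-hand side exactly.

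The only point requiring care is the implicit interchange of the $\mu$-summation with the $\nu$-summation (needed to recognise the Tornheim series) and with the finite $p$- and $i$-summations; all such interchanges are justified by absolute convergence, once again under the single hypothesis $r>m-n$. No substantial obstacle arises, and the theorem follows immediately from Corollary~\ref{thm.s2t7fyj}.
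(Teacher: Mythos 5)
Your proposal is correct and is exactly the paper's own derivation: the theorem is obtained by dividing the identity of Corollary~\ref{thm.s2t7fyj} by $\mu^{r}$ and summing over $\mu$, with each double sum identified as a Tornheim series and the leading term as $E(n+1,r)$. Your convergence remarks (the condition $r>m-n$ coming from $(r-m+p)+(n-p+1)>1$ and from the tail zeta factors) are a welcome bit of extra care that the paper leaves implicit.
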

Setting $m=n+1$ and replacing $n+1$ by $n$, we obtain
\begin{cor}
\[\boxed{E(n,r)=\sum_{p=1}^n{(-1)^{p-1}\binom npT(r-p,n,p)}}\,,\]
\end{cor}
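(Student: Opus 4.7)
The plan is to derive the corollary directly from Theorem~\ref{thm.isoq1ou} by specializing the parameter $m$ and then applying a simple index shift. First I would set $m = n+1$ in Theorem~\ref{thm.isoq1ou}. With this choice the outer sum of $\zeta$-products on the right-hand side, which runs over $i = n+1, \ldots, m-1$, becomes the vacuous sum from $n+1$ to $n$ and vanishes by the convention $\sum_{i=a}^{a-1} f_i = 0$ spelled out earlier in the paper. The identity then collapses to
\[
(-1)^n \sum_{p=0}^n (-1)^p \binom{n+1}{p} T(r-n-1+p,\, n+1,\, n-p+1) = E(n+1, r).
\]

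Next I would rename the parameters, replacing $n+1$ by $n$ (equivalently, shift $n \mapsto n-1$) throughout, obtaining
\[
(-1)^{n-1} \sum_{p=0}^{n-1} (-1)^p \binom{n}{p} T(r-n+p,\, n,\, n-p) = E(n, r).
\]
Finally, I would apply the index shift identity \eqref{equ.mjf11t4} with $a = 0$, $b = n-1$ and $u = n$, so that $p \mapsto n-p$ maps the summation range $0 \le p \le n-1$ to $1 \le p \le n$. Under this substitution the summand
\[
(-1)^p \binom{n}{p} T(r-n+p,\, n,\, n-p)
\]
becomes
\[
(-1)^{n-p} \binom{n}{p} T(r-p,\, n,\, p),
\]
and collecting the overall sign $(-1)^{n-1} \cdot (-1)^{n-p} = (-1)^{2n-p-1} = (-1)^{p-1}$ yields exactly the boxed formula.

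I don't anticipate any real obstacle here; the only point requiring a line of care is confirming that the $\zeta\zeta$-sum indeed evaporates at $m = n+1$ (rather than, say, leaving a boundary term), and that the binomial coefficient $\binom{n}{n-p} = \binom{n}{p}$ together with the arithmetic of signs lines up under the index reversal. Both are immediate from the conventions already recorded in the paper.
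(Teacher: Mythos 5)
Your proof is correct and is precisely the paper's own argument: the paper obtains the corollary by "setting $m=n+1$ and replacing $n+1$ by $n$" in Theorem~\ref{thm.isoq1ou}, with the index reversal $p\mapsto n-p$ left tacit. Your verification that the $\zeta\zeta$-sum is vacuous at $m=n+1$ and that the signs and binomial coefficients match under the reversal is exactly the bookkeeping the paper suppresses.
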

which is the alternating version of Theorem~\ref{thm.tiqlu9p}.

\bigskip

Setting $n=0$ in the identity of Theorem~\ref{thm.isoq1ou} and using Theorem~\ref{th.nj1j516} to write $E(1,r)$ in zeta values we have
\begin{cor}
For $m,r-1\in\mathbb{Z^+}$ and $r>m$ holds
\[
\begin{split}
&( - 1)^{m - 1} T(r - m,m,1)\\
&\qquad= \frac{1}{2}(r + 2)\zeta (r + 1) - \frac{1}{2}\sum_{i = 1}^{r - 2} {\zeta (r - i)\zeta (i + 1)}\\
&\qquad\quad+ \sum_{i = 1}^{m - 1} {( - 1)^i \zeta (r - i)\zeta (i + 1)}\,.
\end{split}
\]
\end{cor}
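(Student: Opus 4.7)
The plan is to specialize Theorem~\ref{thm.isoq1ou} at $n=0$ and then substitute the closed form of $E(1,r)$ supplied by Theorem~\ref{th.nj1j516}. Before doing so, I would check that the hypotheses line up: at $n=0$ the condition $r>m-n$ becomes $r>m$, which is exactly the hypothesis of the corollary, and the hypothesis $n-1\in\mathbb{Z}^+$ needed for Euler's theorem applied to $E(1,r)$ is $r\ge 2$, which is automatic from $r>m\ge 1$.

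The next step is the specialization itself. With $n=0$, the outer $p$-sum on the left-hand side of Theorem~\ref{thm.isoq1ou} collapses to the single term $p=0$, producing $(-1)^{m-1}\binom{m}{0}T(r-m,m,1)=(-1)^{m-1}T(r-m,m,1)$. On the right-hand side, the prefactor $(-1)^n$ is $1$, the binomial coefficient $\binom{i}{0}$ equals $1$, and the zeta argument $r-i+n$ becomes $r-i$, so the finite sum reduces to $\sum_{i=1}^{m-1}(-1)^i\zeta(i+1)\zeta(r-i)$. Thus Theorem~\ref{thm.isoq1ou} specializes to
\[
(-1)^{m-1}T(r-m,m,1)=E(1,r)+\sum_{i=1}^{m-1}(-1)^i\zeta(i+1)\zeta(r-i).
\]

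Finally, I would substitute Euler's evaluation
\[
E(1,r)=\tfrac{1}{2}(r+2)\zeta(r+1)-\tfrac{1}{2}\sum_{j=1}^{r-2}\zeta(j+1)\zeta(r-j)
\]
from Theorem~\ref{th.nj1j516} into the previous display, and relabel the dummy index $j\mapsto i$ in the double zeta sum so that its summand matches the form $\zeta(r-i)\zeta(i+1)$ appearing in the target identity. Collecting the three pieces reproduces the stated formula verbatim. There is no genuine obstacle here: the proof is purely a matter of bookkeeping indices in the specialization, since all analytic content has already been established in Theorems~\ref{thm.isoq1ou} and~\ref{th.nj1j516}.
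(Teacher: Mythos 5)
Your proposal is correct and is exactly the paper's own derivation: the paper obtains this corollary by setting $n=0$ in Theorem~\ref{thm.isoq1ou} and then substituting Euler's evaluation of $E(1,r)$ from Theorem~\ref{th.nj1j516}. The index bookkeeping and the verification of the hypothesis $r>m$ are handled correctly.
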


Setting $n=1$ in the identity of Theorem~\ref{thm.isoq1ou} and employing the above corollary gives
\begin{cor}\label{thm.fl7eehk}
For $r-1\in\mathbb{Z^+}$, $m\in\mathbb{N}_0$ and $r>m-1$ holds
\[
\begin{split}
&\quad( - 1)^{m - 1} T(r - m,m,2)\\
&\qquad= E(2,r) + \frac{m}{2}(r + 3)\zeta (r + 2) - \frac{m}{2}\sum_{i = 1}^{r - 1} {\zeta (r - i + 1)\zeta (i + 1)}\\
&\quad\qquad+ m\sum_{i = 1}^{m - 1} {( - 1)^i \zeta (r - i + 1)\zeta (i + 1)}  - \sum_{i = 2}^{m - 1} {( - 1)^i i\zeta (r - i + 1)\zeta (i + 1)}\,.
\end{split}
\]
Setting $r=4$ in Corollary~\ref{thm.fl7eehk}, noting that $E(2,4)=\zeta(3)^2-\zeta(6)/3$ (see~\cite{flajolet}, page~16), and using $m=0$, $m=1$ and $m=2$, respectively, we have
$$T(4,0,2)=\frac{\zeta(6)}3+\zeta(4)\zeta(2)-\zeta(3)^2\,,$$
$$T(3,1,2)=\frac{\zeta(3)^2}2+\frac{19}6\zeta(6)-2\zeta(4)\zeta(2)$$
and
\begin{equation}\label{equ.aaxvloz}
T(2,2,2)=-\frac{20}3\zeta(6)+4\zeta(4)\zeta(2)\,.
\end{equation}

\bigskip

Similarly, setting $r=2$ in Corollary~\ref{thm.fl7eehk}, noting that \mbox{$E(2,2)=\zeta(2)^2/2+\zeta(4)/2$}, \mbox{(see~\eqref{equ.csxx7lv})}, and using $m=0$, $m=1$, respectively, we have $$T(2,0,2)=\frac{\zeta(2)^2}2-\frac12\zeta(4)$$ and 
\begin{equation}\label{equ.fdkkbct}
T(1,1,2)=-\zeta(2)^2+3\zeta(4)\,.
\end{equation}
\end{cor}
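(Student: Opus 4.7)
The plan is to specialize Theorem~\ref{thm.isoq1ou} to $n=1$ and then eliminate the resulting auxiliary Tornheim series $T(r-m+1,m,1)$ by invoking the preceding corollary (the closed form for $(-1)^{m-1}T(r-m,m,1)$) with a formal shift of the external parameter.

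First I would set $n=1$ in Theorem~\ref{thm.isoq1ou}. The sum on the left then contains only the two terms $p=0$ and $p=1$, collapsing to
\[
(-1)^{m-1}\bigl[T(r-m,m,2) - m\,T(r-m+1,m,1)\bigr],
\]
while the right-hand side reduces to
\[
E(2,r) - \sum_{i=2}^{m-1}(-1)^i i\,\zeta(i+1)\zeta(r-i+1),
\]
because the factor $(-1)^n = -1$ absorbs the sign and $\binom{i}{1}=i$. Isolating $(-1)^{m-1}T(r-m,m,2)$ expresses it as $(-1)^{m-1}m\,T(r-m+1,m,1)$ plus $E(2,r)$ minus the finite zeta-product sum above.

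Next I would apply the immediately preceding corollary — the evaluation of $(-1)^{m-1}T(r-m,m,1)$ in terms of $\zeta$-values — with $r$ formally replaced by $r+1$, so that $T(r-m,m,1)$ becomes $T(r-m+1,m,1)$. This gives
\[
(-1)^{m-1}T(r-m+1,m,1) = \tfrac12(r+3)\zeta(r+2) - \tfrac12\sum_{i=1}^{r-1}\zeta(r-i+1)\zeta(i+1) + \sum_{i=1}^{m-1}(-1)^i\zeta(r-i+1)\zeta(i+1).
\]
Multiplying through by $m$ and substituting into the identity from the previous step groups all the zeta-product terms by the common factor $\zeta(r-i+1)\zeta(i+1)$ and yields, after no cancellation, the exact combination displayed in the statement.

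The concluding particular evaluations at $r=4$ and $r=2$ are pure substitution: plug in the admissible values of $m$, use the quoted closed forms $E(2,4)=\zeta(3)^2-\zeta(6)/3$ and $E(2,2)=\zeta(2)^2/2+\zeta(4)/2$ from equation~\eqref{equ.csxx7lv}, and exploit the empty-sum conventions $\sum_{i=a}^{a-1}=0$ and $\sum_{i=a+1}^{a-1}=-f_a$ recorded earlier in the paper to kill the two rightmost sums when $m\in\{0,1\}$. There is no real obstacle; the whole argument is algebraic bookkeeping, the only mild care being the two empty-sum conventions and the index alignment $i\mapsto r-i+1$ that makes the coefficient $m$ multiply precisely one of the two zeta-product sums.
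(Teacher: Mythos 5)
Your derivation of the main identity is correct and is exactly the paper's route: set $n=1$ in Theorem~\ref{thm.isoq1ou}, so the left side collapses to $(-1)^{m-1}\bigl[T(r-m,m,2)-m\,T(r-m+1,m,1)\bigr]$ and the right side to $E(2,r)-\sum_{i=2}^{m-1}(-1)^i i\,\zeta(i+1)\zeta(r-i+1)$, then eliminate $T(r-m+1,m,1)$ via the preceding corollary with $r$ replaced by $r+1$. All signs and binomials check out, and the zeta-product terms assemble into the stated combination with no cancellation.

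The one flaw is in your treatment of the particular cases. You say the empty-sum conventions ``kill the two rightmost sums when $m\in\{0,1\}$.'' That is true of $m\sum_{i=1}^{m-1}(-1)^i\zeta(r-i+1)\zeta(i+1)$ (it carries the prefactor $m$, or is genuinely empty), but it is false for the last term $-\sum_{i=2}^{m-1}(-1)^i i\,\zeta(r-i+1)\zeta(i+1)$. Under the paper's convention $\sum_{i=a}^{b}f_i=-\sum_{i=b+1}^{a-1}f_i$ for $a>b$, one has $\sum_{i=2}^{0}f_i=-f_1$ and $\sum_{i=2}^{-1}f_i=-(f_0+f_1)=-f_1$ (since $f_0=0$ because of the factor $i$), so for both $m=0$ and $m=1$ this term contributes $-\zeta(r)\zeta(2)$, not zero. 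That contribution is essential: dropping it gives $T(r,0,2)=-E(2,r)$ instead of the correct $\zeta(r)\zeta(2)-E(2,r)$ (consistent with~\eqref{equ.up0stgt}), and it is precisely what produces the $+\zeta(4)\zeta(2)$ in $T(4,0,2)$, the $-2\zeta(4)\zeta(2)$ in $T(3,1,2)$, and the $\zeta(2)^2/2-\zeta(4)/2$ in $T(2,0,2)$. As written, your substitution would not reproduce the displayed evaluations; you need to apply the reversed-sum convention to that last sum rather than discard it.
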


\bigskip

Setting $r=s$ in~\eqref{equ.shguxqu} and using the reflection property gives
\begin{equation}
2T(s,s-1,t+1)=T(s,s,t)\,.
\end{equation}
In particular, we have $$2T(s,s-1,1)=T(s,s,0)=\zeta(s)^2\,,\quad s-1\in\mathbb{Z^+}$$ and $$2T(s,s-1,3)=T(s,s,2)\,,\quad s\in\mathbb{Z^+}\,,$$ from which we get $$2T(1,0,3)=-\zeta(2)^2+3\zeta(4)$$ and $$2T(2,1,3)=-\frac{20}3\zeta(6)+4\zeta(4)\zeta(2)\,,$$ after using~\eqref{equ.fdkkbct} and \eqref{equ.aaxvloz}.

\bigskip

Setting $r=n+1$ in the identity of Theorem~\ref{thm.isoq1ou} we obtain
\begin{cor}\label{thm.ecbg7m6}
For \mbox{$m,n\in\mathbb{Z^+}$} and $2n>m-1$ holds
\[
\begin{split}
&( - 1)^{m - 1} 2\sum_{p = 0}^n {\left\{ {( - 1)^p \binom mpT(n - m + p+1,m,n - p + 1)} \right\}}\\
&\qquad= \zeta(n+1)^2+\zeta(2n+2)\\
&\qquad\qquad + ( - 1)^n 2\sum_{i = n + 1}^{m - 1} {\left\{ {( - 1)^i \binom in\zeta (i + 1)\zeta (2n - i + 1)} \right\}}\,.
\end{split}
\]
\end{cor}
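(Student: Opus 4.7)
The plan is to specialize Theorem~\ref{thm.isoq1ou} by setting $r=n+1$. First I would verify that the hypothesis $r>m-n$ from the theorem translates to $n+1>m-n$, i.e.\ $2n>m-1$, which is exactly the stated condition. With this substitution the left-hand side of Theorem~\ref{thm.isoq1ou} becomes
\[
(-1)^{m-1}\sum_{p=0}^n(-1)^p\binom{m}{p}T(n-m+p+1,\,m,\,n-p+1),
\]
so after multiplying through by $2$ it already has the form of the left-hand side of the corollary. Also, the zeta argument $\zeta(r-i+n)$ in the tail sum becomes $\zeta(2n+1-i)=\zeta(2n-i+1)$, again matching the corollary.

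The only substantive step is evaluating the diagonal Euler sum $E(n+1,n+1)$ that appears on the right. This is the classical $m=n$ case of linear Euler sums, already flagged in the introduction as being reducible to zeta values. I would obtain it by the standard symmetry argument: writing
\[
\zeta(n+1)^{2}=\sum_{s,\nu\ge1}\frac{1}{s^{n+1}\nu^{n+1}}
\]
and splitting the double sum into the regions $s<\nu$, $s=\nu$, $s>\nu$, symmetry gives $\zeta(n+1)^{2}=2E(n+1,n+1)-\zeta(2n+2)$, hence
\[
E(n+1,n+1)=\tfrac12\bigl[\zeta(n+1)^{2}+\zeta(2n+2)\bigr].
\]

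Substituting this evaluation into the right-hand side of Theorem~\ref{thm.isoq1ou} with $r=n+1$, and then multiplying both sides by $2$, yields exactly the identity in the corollary. There is no real obstacle here beyond recognizing the diagonal Euler evaluation; the rest is a direct specialization with an index check.
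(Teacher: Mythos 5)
Your proposal is correct and matches the paper's own (essentially one-line) derivation: the corollary is obtained precisely by setting $r=n+1$ in Theorem~\ref{thm.isoq1ou} and invoking $2E(n+1,n+1)=\zeta(n+1)^2+\zeta(2n+2)$, which the paper takes from the symmetry relation~\eqref{equ.csxx7lv} rather than rederiving it by the diagonal-splitting argument as you do. The index check $r>m-n\Leftrightarrow 2n>m-1$ and the identification $\zeta(r-i+n)=\zeta(2n-i+1)$ are exactly as in the paper.
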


\bigskip

Setting $m=n+1$ in Corollary~\ref{thm.ecbg7m6}, substituting $n$ for $n+1$ and utilizing the index shift identity~\eqref{equ.mjf11t4} gives, for $n-1\in\mathbb{Z^+}$, 
\[
2\sum_{p=1}^n(-1)^{p-1}\binom np T(n-p,n,p)=\zeta(n)^2+\zeta(2n)\,,
\]
from which, with the aide of Corollary~\ref{cor.p5c00dq} and after some manipulation, we get,
\[
\sum_{p = 1}^{2n - 1} {( - 1)^{p - 1} \binom{2n}p T(2n - p,2n,p)}  = \zeta (2n)^2\,,\quad n\in\mathbb{Z^+} 
\]
and
\[
\begin{split}
&\sum_{p = 1}^{2n} {( - 1)^{p - 1} \binom{2n+1}pT(2n - p + 1,2n + 1,p)}\\
&\qquad\qquad\qquad  = \zeta \left( {2(2n + 1)} \right)\,,\quad n\in\mathbb{Z^+}\,.
\end{split}
\]
In particular,
$$2T(1,2,1)=\zeta(2)^2\,.$$
\bigskip

Dividing the identity of Corollary~\ref{thm.ztqq3ah} by $\mu^r$, and summing over $\mu$, we obtain
\begin{thm}
For $m\in\mathbb{Z^+}$, $r-2\in\mathbb{Z^+}$ and $n\in\mathbb{N}_0$ holds
\[
\begin{split}
&( - 1)^m \sum_{p = 0}^n {\binom{m+p-1}pT(r - m - 1,m + p,n - p + 1)}\\
&\qquad =  - E(n + 1,r - 1) + \sum_{i = 2}^m {( - 1)^i \binom{i+n-1}n\zeta (n + i)\zeta (r - i)}\,.
\end{split}
\]
\end{thm}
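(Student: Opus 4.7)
The plan is to execute exactly the recipe hinted at in the sentence preceding the statement: divide the identity of Corollary~\ref{thm.ztqq3ah} through by $\mu^r$ and sum over $\mu$ from $1$ to $\infty$. Each ingredient should then collapse to a recognisable object (Tornheim series, Euler sum, or product of zeta values), and the claim will follow by direct comparison of the two sides.

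First I would rewrite the left side. The factor $\mu^{m+1}/\mu^r$ becomes $1/\mu^{r-m-1}$, so that interchanging the (absolutely convergent) sums over $\mu$ and $\nu$ converts
\[
\sum_{\mu=1}^\infty\frac{1}{\mu^r}\sum_{\nu=1}^\infty\frac{\mu^{m+1}}{\nu^{m+p}(\nu+\mu)^{n-p+1}}
\]
into $T(r-m-1,\,m+p,\,n-p+1)$ for each fixed $p$. The finite combination over $p$ weighted by $\binom{m+p-1}{p}$ and the overall sign $(-1)^m$ thus reproduces the left-hand side of the claim verbatim.

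Next I would process the right side term by term. Division of $-\mu H_{\mu,n+1}$ by $\mu^r$ yields $-H_{\mu,n+1}/\mu^{r-1}$, whose sum over $\mu$ is $-E(n+1,r-1)$ by the definition of the linear Euler sum. In the same way, $\mu^{i}\zeta(n+i)/\mu^{r}=\zeta(n+i)/\mu^{r-i}$ sums over $\mu$ to $\zeta(n+i)\zeta(r-i)$ for each $i$, so the finite sum on the right of Corollary~\ref{thm.ztqq3ah} turns into $\sum_{i=2}^{m}(-1)^{i}\binom{i+n-1}{n}\zeta(n+i)\zeta(r-i)$, exactly matching the remaining piece of the theorem.

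The main obstacle --- really the only point requiring care --- is verifying the hypotheses under which these manipulations are valid. One must check the Tornheim convergence conditions for $T(r-m-1,\,m+p,\,n-p+1)$ throughout $0\le p\le n$, the bound $r-1>1$ ensuring existence of $E(n+1,r-1)$, and $r-i>1$ for $2\le i\le m$ to secure each $\zeta(r-i)$. The hypotheses $r-2\in\mathbb{Z}^+$ and $m\in\mathbb{Z}^+$ handle the first two, while the third requires the tacit assumption $r>m+1$ (analogous to the constraint $r>m-n$ appearing in Theorem~\ref{thm.isoq1ou}); with this in place, Tonelli justifies the interchange of summations and the identity drops out with essentially no further calculation.
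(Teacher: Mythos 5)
Your proposal is correct and follows exactly the paper's own (one-line) derivation: divide the identity of Corollary~\ref{thm.ztqq3ah} by $\mu^r$, sum over $\mu$, interchange the order of summation to recognise the Tornheim series on the left, and identify $-E(n+1,r-1)$ and the $\zeta(n+i)\zeta(r-i)$ products on the right. Your observation that the convergence of $\zeta(r-i)$ for $i\le m$ and of $T(r-m-1,m+n,1)$ tacitly requires $r>m+1$, a hypothesis the theorem omits, is a genuine (and correct) refinement of the statement as printed.
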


\begin{cor}
For $m,n\in\mathbb{Z^+}$ holds
\[
\begin{split}
&( - 1)^m 2\sum_{p = 0}^n {\binom{m+p-1}pT(n - m + 1,m + p,n - p + 1)}\\
&\qquad =  - \zeta(n+1)^2-\zeta(2n+2) + 2\sum_{i = 2}^m {( - 1)^i \binom{i+n-1}n\zeta (n + i)\zeta (r - i)}\,.
\end{split}
\]
\end{cor}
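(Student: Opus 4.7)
The plan is to obtain this corollary as the diagonal specialization of the immediately preceding theorem. First I would set $r=n+2$ in that theorem, which converts its first Tornheim argument $r-m-1$ into $n-m+1$, matching the left-hand side of the corollary, and turns $r-1$ into $n+1$, so $E(n+1,r-1)$ becomes the diagonal linear Euler sum $E(n+1,n+1)$. The right-hand zeta factor then reads $\zeta(r-i)=\zeta(n+2-i)$; I would remark in passing that the symbol $r$ left over in the displayed statement of the corollary should be read this way.

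Next I would evaluate $E(n+1,n+1)$ in closed form by deriving the reflection formula for linear Euler sums. Combining the identity $E(n,m)=\zeta(n)\zeta(m)-T(m,0,n)$ from~\eqref{equ.lf9ruko} with the basic property $T(r,0,t)+T(t,0,r)=\zeta(r)\zeta(t)-\zeta(r+t)$, and adding the two versions obtained by swapping the arguments, gives
\[
E(m,n)+E(n,m)=\zeta(m)\zeta(n)+\zeta(m+n).
\]
Specializing to $m=n$ and then replacing $n$ by $n+1$ yields $2E(n+1,n+1)=\zeta(n+1)^{2}+\zeta(2n+2)$.

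Finally, substituting this evaluation into the $r=n+2$ specialization of the preceding theorem and multiplying through by $2$ produces the stated identity, the contribution $-\zeta(n+1)^{2}-\zeta(2n+2)$ on the right arising exactly as $-2E(n+1,n+1)$. There is no genuine obstacle: the proof is pure identification once one sees that the corollary is the diagonal slice $r=n+2$ of the preceding theorem and that the diagonal Euler sum is reducible to zeta values. The only point to watch is the tacit $r\mapsto n+2$ substitution in the zeta products on the right-hand side.
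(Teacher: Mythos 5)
Your proposal is correct and matches the paper's (implicit) derivation exactly: the corollary is the $r=n+2$ specialization of the preceding theorem, with $2E(n+1,n+1)=\zeta(n+1)^2+\zeta(2n+2)$ supplied by the symmetry relation~\eqref{equ.csxx7lv}, which you legitimately re-derive from~\eqref{equ.lf9ruko} and the basic Tornheim property rather than citing it. You are also right that the residual $\zeta(r-i)$ in the displayed corollary is a leftover that should read $\zeta(n+2-i)$.
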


In particular,
\[
2\sum_{p = 1}^n {T(n-1,n - p + 1,p)}  = \zeta (n)^2  + \zeta (2n)\,,\quad n-1\in\mathbb{Z^+}\,.
\]

\bigskip

Dividing through the identity of Corollary~\ref{thm.cwe2ocv} by $\mu^r$ and summing over $\mu$ gives
\begin{thm}
For $r-1\in\mathbb{Z^+}$, $m,n\in\mathbb{N}_0$ and $r>m+1$ holds
\[
\begin{split}
&\sum_{p = 0}^n {\binom{m+p}mT(r - m - 1,n - p + 1,m + p + 1)}\\
&\quad\qquad = \sum_{i = n}^{m + n} {\binom inE(i + 1,r - i + n)}  + \sum_{i = n + 1}^{m + n} {\binom in\zeta (i + 1)\zeta (r - i + n)}\,.
\end{split}
\]

\end{thm}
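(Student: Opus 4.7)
The plan is to carry out exactly the operation the theorem's preamble prescribes: take the identity of Corollary~\ref{thm.cwe2ocv}, divide both sides by $\mu^{r}$, and sum $\mu$ from $1$ to $\infty$. The conditions $r-1\in\mathbb{Z^+}$ and $r>m+1$ are present precisely to guarantee absolute convergence of every outer sum over $\mu$, so that Fubini's theorem lets me interchange the orders of summation freely.

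I would proceed in three steps. \textbf{Step 1 (Left-hand side).} After dividing the left-hand side of Corollary~\ref{thm.cwe2ocv} by $\mu^{r}$, the generic summand becomes
\[
\binom{m+p}{m}\,\frac{1}{\mu^{r-m-1}\,\nu^{n-p+1}(\nu+\mu)^{m+p+1}}.
\]
Pulling the finite $p$-sum outside and summing over $\mu,\nu\ge 1$, the definition of the Tornheim double series delivers exactly $\sum_{p=0}^{n}\binom{m+p}{m}T(r-m-1,\,n-p+1,\,m+p+1)$. \textbf{Step 2 (Right-hand side).} For each fixed $i$ with $n\le i\le m+n$, division by $\mu^{r}$ turns $\mu^{i-n}H_{\mu,i+1}$ into $H_{\mu,i+1}/\mu^{r-i+n}$, and summing over $\mu$ produces $E(i+1,\,r-i+n)$ by definition of the linear Euler sum; analogously, the $\zeta$-terms sum to $\zeta(i+1)\zeta(r-i+n)$. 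Reassembling these with the binomial weights reproduces the right-hand side of the statement.

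\textbf{Step 3 (Convergence check).} I verify that every object produced is finite. For the Tornheim series on the left one needs $(r-m-1)+(m+p+1)=r+p>1$, $(n-p+1)+(m+p+1)=m+n+2>1$, and $(r-m-1)+(n-p+1)+(m+p+1)=r+n+1>2$; all three hold under the hypotheses for every $0\le p\le n$. For the right-hand side, the tight constraint arises from the largest index $i=m+n$, which yields outer exponent $r-i+n=r-m$; the assumption $r>m+1$ (equivalently $r\ge m+2$) forces $r-m\ge 2$, so both $E(i+1,r-i+n)$ and $\zeta(r-i+n)$ converge for every $i$ in the displayed range.

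The main and essentially only obstacle is this Fubini/absolute-convergence step: the identity of Corollary~\ref{thm.cwe2ocv} is a pointwise identity in $\mu$, and I must justify that dividing by $\mu^{r}$ and summing over $\mu$ preserves equality term-by-term. Once the bounds in Step~3 are laid down, absolute convergence lets me interchange the $\mu$-sum with the finite $p$- and $i$-sums and with the inner $\nu$-sum, after which the conclusion is just matching notation.
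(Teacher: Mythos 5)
Your argument is precisely the paper's own proof---the paper offers nothing beyond the sentence ``Dividing through the identity of Corollary~\ref{thm.cwe2ocv} by $\mu^r$ and summing over $\mu$ gives''---and your Step~3 supplies the convergence justification that the paper leaves tacit. There is, however, one concrete point you gloss over in Step~2: the right-hand side of Corollary~\ref{thm.cwe2ocv} is $\sum_{i=n}^{m+n}\binom{i}{n}\mu^{i-n}H_{\mu,i+1}-\sum_{i=n+1}^{m+n}\binom{i}{n}\mu^{i-n}\zeta(i+1)$, so dividing by $\mu^r$ and summing over $\mu$ yields $\sum_{i=n}^{m+n}\binom{i}{n}E(i+1,r-i+n)-\sum_{i=n+1}^{m+n}\binom{i}{n}\zeta(i+1)\zeta(r-i+n)$, with a \emph{minus} sign on the second block, whereas the theorem as printed has a plus. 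Your assertion that the computation ``reproduces the right-hand side of the statement'' therefore silently flips a sign; in fact the printed statement appears to carry a misprint. A sanity check with $m=1$, $n=0$ confirms the minus: the procedure gives $T(r-2,1,2)=E(1,r)+E(2,r-1)-\zeta(2)\zeta(r-1)$, and at $r=4$ this evaluates to $T(2,1,2)=-\tfrac{3}{2}\zeta(5)+\zeta(2)\zeta(3)$, consistent with $2T(2,1,2)=T(2,2,1)$ obtained from~\eqref{equ.shguxqu}, while the plus sign would give $-\tfrac{3}{2}\zeta(5)+3\zeta(2)\zeta(3)$. So: same method as the paper, correctly executed up to the final transcription, but you should either correct the sign in your conclusion or explicitly flag the typo in the theorem.
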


\section{Evaluation of Tornheim double series}

\subsection{Euler-Zagier double zeta function}
Before discussing the general Tornheim double series for finite $r$, $s$ and $t$, we first consider the double series $T(r,0,t)$ and $T(0,s,t)$, with $r+t>2$ or $s+t>2$ and $t>0$.

\bigskip

According to~\eqref{equ.lf9ruko}, when $r$ and $t$ are positive integers greater than unity, then
\begin{equation}\label{equ.up0stgt}
T(r,0,t)=\zeta(r)\zeta(t)-E(t,r)\,.
\end{equation}
Thus, reduction of $T(r,0,t)$ to $\zeta$ values is possible when $r$ and $t$ are of different parity, in view of Theorem~3.1 of~\cite{flajolet}, and also when $r=t$ or $r+t=6$. 

\bigskip

Using, in~\eqref{equ.up0stgt}, the symmetry property of linear Euler sums, 
\begin{equation}\label{equ.csxx7lv}
E(m,n)+E(n,m)=\zeta(m+n)+\zeta(m)\zeta(n)\,,\quad\cite{flajolet, adegoke15}\,,
\end{equation}
we have
\begin{thm}
$$T(0,s,t)+T(0,t,s)=\zeta(s)\zeta(t)-\zeta(s+t)\,,\quad s-1,\,t-1\in\mathbb{Z^+}\,.$$
\end{thm}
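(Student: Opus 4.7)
The plan is to combine three facts already established in the excerpt: the reflection symmetry of the Tornheim series, the connection between $T(r,0,t)$ and the Euler sum $E(t,r)$, and the symmetry identity for linear Euler sums.

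First I would apply the reflection property \eqref{equ.qg21457}, namely $T(r,s,t)=T(s,r,t)$, to rewrite both terms on the left-hand side with the ``zero exponent'' in the middle slot: $T(0,s,t)=T(s,0,t)$ and $T(0,t,s)=T(t,0,s)$. This brings us into the regime of equation~\eqref{equ.up0stgt}, which is the direct link between $T(r,0,t)$ and the linear Euler sum $E(t,r)$. Since the hypothesis $s-1,t-1\in\mathbb{Z^+}$ means both $s>1$ and $t>1$, the applicability condition of \eqref{equ.up0stgt} is satisfied for each term.

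Next I would substitute, obtaining
\[
T(0,s,t)+T(0,t,s)=\bigl[\zeta(s)\zeta(t)-E(t,s)\bigr]+\bigl[\zeta(t)\zeta(s)-E(s,t)\bigr]
=2\zeta(s)\zeta(t)-\bigl(E(s,t)+E(t,s)\bigr).
\]
The sum $E(s,t)+E(t,s)$ is precisely what the symmetry identity \eqref{equ.csxx7lv} evaluates: it equals $\zeta(s+t)+\zeta(s)\zeta(t)$. Inserting this and simplifying yields exactly $\zeta(s)\zeta(t)-\zeta(s+t)$.

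There is really no obstacle here; the theorem is a one-line consequence of bolting together three earlier identities. The only point requiring a moment's care is to confirm that the reflection property is being invoked in the correct slot (the first two indices, which are the exponents of $\mu$ and $\nu$, being interchangeable), so that $T(0,s,t)$ indeed equals $T(s,0,t)$ and we land in the domain where \eqref{equ.up0stgt} applies.
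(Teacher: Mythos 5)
Your proposal is correct and is essentially the paper's own argument: the paper likewise combines~\eqref{equ.up0stgt} with the Euler-sum symmetry relation~\eqref{equ.csxx7lv} (the use of the reflection property~\eqref{equ.qg21457} to move the zero slot is implicit there). Nothing further is needed.
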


\begin{cor}\label{cor.p5c00dq}
$$2\,T(0,s,s)=\zeta(s)^2-\zeta(2s)\,,\quad s-1\in\mathbb{Z^+}\,.$$
\end{cor}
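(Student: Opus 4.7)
The plan is to obtain this as an immediate specialization of the preceding theorem, which asserts
\[
T(0,s,t) + T(0,t,s) = \zeta(s)\zeta(t) - \zeta(s+t), \qquad s-1,\,t-1 \in \mathbb{Z^+}.
\]
I would simply set $t = s$ in that identity. The left-hand side collapses to $2\,T(0,s,s)$ and the right-hand side becomes $\zeta(s)^2 - \zeta(2s)$, which is exactly the claim. There is no genuine obstacle; the work has already been done in establishing the preceding theorem (via \eqref{equ.up0stgt} together with the Euler-sum symmetry \eqref{equ.csxx7lv}), and the corollary is merely the diagonal case.

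As a sanity check and alternative derivation, one can also argue directly without invoking the preceding theorem: by the reflection property \eqref{equ.qg21457}, $T(0,s,s) = T(s,0,s)$, and by \eqref{equ.lf9ruko} we have $T(s,0,s) = \zeta(s)^2 - E(s,s)$. Setting $m = n = s$ in the symmetry relation \eqref{equ.csxx7lv} yields $2E(s,s) = \zeta(2s) + \zeta(s)^2$, hence $E(s,s) = \tfrac{1}{2}(\zeta(s)^2 + \zeta(2s))$. Substituting this gives $T(0,s,s) = \tfrac{1}{2}(\zeta(s)^2 - \zeta(2s))$, which is equivalent to the stated corollary. Either route works; the first is a one-line specialization and is cleaner, so that is what I would present.
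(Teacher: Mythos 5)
Your proposal is correct and matches the paper's (implicit) derivation exactly: the corollary is obtained by setting $t=s$ in the immediately preceding theorem, which was itself established via \eqref{equ.up0stgt} and the symmetry relation \eqref{equ.csxx7lv}. Your alternative direct check is also valid but is really the same computation unwound, so nothing further is needed.
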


\bigskip

$T(0,0,t)$ was evaluated, in~\cite{tornheim}, as
\begin{thm}
$$T(0,0,t)=\zeta(t-1)-\zeta(t)\,,\quad t>2\,.$$
\end{thm}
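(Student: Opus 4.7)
The plan is to reduce the double sum defining $T(0,0,t)$ to a single sum by grouping terms according to the value of $k = \mu + \nu$. Since the summand $1/(\mu+\nu)^t$ depends on $\mu$ and $\nu$ only through their sum, this reindexing is the natural move and does all the real work.

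First I would write
\[
T(0,0,t) = \sum_{\mu=1}^\infty \sum_{\nu=1}^\infty \frac{1}{(\mu+\nu)^t} = \sum_{k=2}^\infty \frac{N(k)}{k^t},
\]
where $N(k)$ counts the pairs $(\mu,\nu)$ with $\mu,\nu \geq 1$ and $\mu+\nu=k$. A quick enumeration of the solutions $(1,k-1),(2,k-2),\dots,(k-1,1)$ gives $N(k)=k-1$.

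Next I would split the resulting single series using $k-1 = k - 1$, giving
\[
T(0,0,t) = \sum_{k=2}^\infty \frac{1}{k^{t-1}} - \sum_{k=2}^\infty \frac{1}{k^t}
        = \bigl(\zeta(t-1)-1\bigr) - \bigl(\zeta(t)-1\bigr)
        = \zeta(t-1) - \zeta(t).
\]
The hypothesis $t > 2$ is exactly what is needed to ensure the first of the two single-variable series converges (it requires $t-1 > 1$); this also retrospectively justifies rearranging the original absolutely convergent double sum by the value of $\mu+\nu$.

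There is no real obstacle here: the only subtlety is noting that Fubini/Tonelli applies because all terms are positive, so grouping by $k=\mu+\nu$ is legitimate, and that the convergence condition $t>2$ coincides with the Tornheim convergence condition $r+s+t>2$ stated earlier (with $r=s=0$). An alternative route would be to use the identity $\sum_{\nu=1}^\infty 1/(\nu+\mu)^t = \zeta(t) - H_{\mu,t}$ from \eqref{equ.k6u5u0q} and sum over $\mu$, but since the individual terms of that outer sum do not tend to zero one would have to regroup via Abel summation or interpret $\sum_\mu (\zeta(t)-H_{\mu,t})$ carefully; the direct counting argument above avoids this complication entirely.
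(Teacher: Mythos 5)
Your proof is correct, but it is a genuinely different argument from the one in the paper. You evaluate the double series by grouping terms along the diagonals $\mu+\nu=k$, counting the $k-1$ ordered representations of $k$, and splitting $\sum_{k\ge 2}(k-1)k^{-t}$ into two tails of zeta series; positivity of the terms justifies the regrouping and $t>2$ gives convergence of the $\zeta(t-1)$ piece. The paper instead works with the truncated square $\sum_{\mu=1}^N\sum_{\nu=1}^N$, rewrites the inner sum as $H_{N+\mu,t}-H_{\mu,t}$ via~\eqref{equ.i4135lr}, reduces $\sum_{\mu=1}^{2N}H_{\mu,t}-2\sum_{\mu=1}^N H_{\mu,t}$ in closed form using the finite identity $\sum_{r=1}^N H_{r,n}=(N+1)H_{N,n}-NH_{N,n-1}$, and then lets $N\to\infty$. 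Your route is more elementary and self-contained (it is essentially the classical evaluation, close to Tornheim's original, which the paper explicitly states it is departing from), while the paper's route stays inside the generalized-harmonic-number machinery it develops and avoids the issue you flag at the end, namely that summing $\zeta(t)-H_{\mu,t}$ over $\mu$ termwise is not directly legitimate. Both are valid; yours trades the harmonic-number identities for a one-line combinatorial count.
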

Here we give a different derivation as follows.
\begin{proof} From~\eqref{equ.i4135lr}, we have
\[
\begin{split}
\sum_{\mu  = 1}^N {\sum_{\nu  = 1}^N {\frac{1}{{(\nu  + \mu )^t }}} } &= \sum_{\mu  = 1}^N {\left( {H_{N + \mu ,t}  - H_{\mu ,t} } \right)}\\ 
&= \sum_{\mu  = 1}^N {H_{N + \mu ,t} }  - \sum_{\mu  = 1}^N {H_{\mu ,t} }\\ 
&= \sum_{\mu  = N + 1}^{2N} {H_{\mu ,t} }  - \sum_{\mu  = 1}^N {H_{\mu ,t} }\\ 
&= \sum_{\mu  = 1}^{2N} {H_{\mu ,t} }  - 2\sum_{\mu  = 1}^N {H_{\mu ,t} }\\
&= 2NH_{2N,t}  + H_{2N,t}  - H_{2N,t - 1}\\ 
&\qquad- 2NH_{N,t}  - 2H_{N,t}  + 2H_{N,t - 1}\,,
\end{split}
\]
and the result follows on taking limit $N\to\infty$.
\end{proof}
Note that in the final step of the above proof, we used $$\sum_{r=1}^NH_{r,n}=(N+1)H_{N,n}-NH_{N,n-1}\,,\quad\mbox{(identity~3.1 of \cite{adegoke15})}\,.$$

\subsection{Evaluation of the general Tornheim double series}

Dividing the identity of Corollary~\ref{thm.p1iwzvm} by $\mu^r$ and summing over $\mu$, we obtain
\begin{thm}\label{thm.bvgewqc} For $r\in\mathbb{N}_0$, $s,t\in\mathbb{Z^+}$, $\mbox{r+s>1}$ and $\mbox{r+t>1}$ holds
\[
\begin{split}
T(r,s,t) &= \sum_{i = 0}^{s - 2} {( - 1)^i \binom{t+i-1}i\zeta (s - i)\zeta (r + t + i)}\\
&\quad+ ( - 1)^s \sum_{i = 0}^{t - 2} {\binom{s+i-1}i\zeta (t - i)\zeta (r + s + i)}\\
&\qquad- ( - 1)^s \sum_{i = 0}^{t - 2} {\left\{ {\binom{s+i-1}i\sum_{\mu  = 1}^\infty  {\frac{{H_{\mu ,t - i} }}{{\mu ^{r + s + i} }}} } \right\}}\\
&\qquad\quad  - ( - 1)^s \binom{s+t-2}{t-1}\sum_{\mu  = 1}^\infty  {\frac{{H_\mu  }}{{\mu ^{r + s + t - 1} }}}\,.
\end{split}
\]
\end{thm}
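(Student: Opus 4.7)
The plan is to take the identity of Corollary~\ref{thm.p1iwzvm} as the starting point and perform exactly the operation suggested in the prose: divide by $\mu^r$ and sum $\mu$ from $1$ to $\infty$. Each summand on the right-hand side of Corollary~\ref{thm.p1iwzvm} has the form (constant in $\mu$)$/\mu^k$ or $H_{\mu,\ell}/\mu^k$, so after division by $\mu^r$ and summation one obtains either a product of Riemann zeta values or a linear Euler sum.

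More concretely, after dividing Corollary~\ref{thm.p1iwzvm} by $\mu^r$, the left-hand side becomes $\sum_{\nu=1}^\infty 1/\bigl(\mu^r\nu^s(\nu+\mu)^t\bigr)$, and summing in $\mu$ produces $T(r,s,t)$ by definition. On the right, the first group of terms contributes
\[
\sum_{i=0}^{s-2}(-1)^i\binom{t+i-1}{i}\zeta(s-i)\,\zeta(r+t+i),
\]
the $\zeta(t-i)$ piece of the second group contributes
\[
(-1)^s\sum_{i=0}^{t-2}\binom{s+i-1}{i}\zeta(t-i)\,\zeta(r+s+i),
\]
the $H_{\mu,t-i}$ piece gives the third (Euler-sum) line, and the final $H_\mu/\mu^{s+t-1}$ term yields the last line after using $(-1)^{s-1}=-(-1)^s$ and $\binom{s+t-2}{s-1}=\binom{s+t-2}{t-1}$ to bring it into the stated form.

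The only non-routine point is justifying the interchange of summations and the convergence of each individual series under the stated hypotheses $r+s>1$, $r+t>1$. The double series defining $T(r,s,t)$ is absolutely convergent precisely when the Tornheim finiteness conditions recalled in~\eqref{equ.qg21457}ff.\ hold (which are implied by $r\in\mathbb{N}_0$, $s,t\in\mathbb{Z^+}$ together with $r+s>1$, $r+t>1$, since then automatically $r+s+t>2$). This gives Fubini for the LHS. On the RHS, each zeta product has argument $\geq r+s$ or $\geq r+t$ and thus $>1$; each Euler sum $\sum_\mu H_{\mu,t-i}/\mu^{r+s+i}$ has $r+s+i\geq r+s>1$ and $H_{\mu,t-i}=O(\log\mu)$ (or $O(1)$ when $t-i\ge 2$); and $\sum_\mu H_\mu/\mu^{r+s+t-1}$ converges because $r+s+t-1>1$. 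So the termwise summation is legitimate.

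The expected obstacle is purely bookkeeping: keeping the two ranges $0\le i\le s-2$ and $0\le i\le t-2$ separate, tracking the $(-1)^s$ versus $(-1)^{s-1}$ prefactors, and identifying the symmetric binomial coefficient in the last line. Once those signs and indices are aligned, the identity falls out with no further manipulation.
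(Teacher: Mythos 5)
Your proposal is correct and is exactly the paper's argument: the theorem is obtained by dividing the identity of Corollary~\ref{thm.p1iwzvm} by $\mu^r$ and summing over $\mu$, with the sign and binomial-symmetry adjustments you describe. The convergence discussion you add is sound (the paper omits it), but the route is the same.
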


\begin{cor}
\[
\begin{split}
T(r,s,1) &= \frac{{( - 1)^{s - 1} }}{2}\left[ {(r + s + 2)\zeta (r + s + 1) - \sum_{i = 1-r}^{s - 2} {\zeta (s - i)\zeta (r + i + 1)} } \right]\\
&\qquad+ \sum_{i = 0}^{s - 2} {( - 1)^i\zeta (s - i)\zeta (r + i + 1)}\,.
\end{split}
\]
In particular, we have the beautiful and well-known result

\[
T(1,1,1) = \sum_{\mu  = 1}^\infty  {\frac{{H_\mu  }}{{\mu ^2 }}}  = 2\zeta(3)\,.
\]

\end{cor}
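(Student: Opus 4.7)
The plan is to specialize Theorem~\ref{thm.bvgewqc} to $t=1$ and then eliminate the resulting linear Euler sum $E(1,r+s)$ by invoking Euler's Theorem~\ref{th.nj1j516}. The main task is bookkeeping—tracking which sums collapse and re-indexing the surviving one so it matches the stated form.

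First I would substitute $t=1$ into the four terms of Theorem~\ref{thm.bvgewqc}. The two middle sums both have upper limit $t-2=-1$ and are therefore empty, and the first sum collapses cleanly because $\binom{t+i-1}{i}=\binom{i}{i}=1$, leaving $\sum_{i=0}^{s-2}(-1)^i\zeta(s-i)\zeta(r+i+1)$. The last term becomes $-(-1)^s\binom{s-1}{0}\sum_{\mu\ge 1}H_\mu/\mu^{r+s}=(-1)^{s-1}E(1,r+s)$, so at this stage
\[
T(r,s,1)=\sum_{i=0}^{s-2}(-1)^i\zeta(s-i)\zeta(r+i+1)+(-1)^{s-1}E(1,r+s).
\]

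Next I would apply Theorem~\ref{th.nj1j516} with $n=r+s$ to rewrite $E(1,r+s)$ in closed form. The Euler sum there is $\frac{1}{2}[(r+s+2)\zeta(r+s+1)-\sum_{j=1}^{r+s-2}\zeta(j+1)\zeta(r+s-j)]$. To convert this into the index range $1-r\le i\le s-2$ appearing in the statement, I would perform the index shift $j\mapsto r+i$ (a direct instance of the shift identity~\eqref{equ.mjf11t4}): $j=1$ corresponds to $i=1-r$, $j=r+s-2$ corresponds to $i=s-2$, and $\zeta(j+1)\zeta(r+s-j)=\zeta(r+i+1)\zeta(s-i)$. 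Substituting back and multiplying through by $(-1)^{s-1}/2$ yields exactly the first line of the displayed formula, while the surviving sum from the previous step supplies the second line.

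For the ``in particular'' assertion, I would set $r=s=1$. Both the sum $\sum_{i=0}^{s-2}$ and the inner sum $\sum_{i=1-r}^{s-2}$ have upper limit strictly below their lower limit and hence vanish (using the convention $\sum_{i=a}^{a-1}f_i=0$ introduced in the preamble). What remains is $\tfrac{1}{2}(1+1+2)\zeta(3)=2\zeta(3)$, matching the famous Euler value. There is no serious obstacle here; the only thing to be careful about is the sign $(-1)^{s-1}$ and the correspondence between the Euler sum's summation index and the shifted range, which is where a careless calculation could mis-report a symmetric double-counting.
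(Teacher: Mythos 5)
Your proposal is correct and follows exactly the route the paper intends: the corollary is obtained by setting $t=1$ in Theorem~\ref{thm.bvgewqc}, noting that the two sums with upper limit $t-2$ are empty and the remaining Tornheim term reduces to $(-1)^{s-1}E(1,r+s)$, and then substituting Euler's formula of Theorem~\ref{th.nj1j516} with the index shift $j=r+i$. The specialization $r=s=1$ is also handled correctly under the paper's empty-sum convention.
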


We see immediately from Theorem~\ref{thm.bvgewqc} that due to the presence of the Euler sum \mbox{$E(t-i,i+r+s)$}, of weight \mbox{$w=r+s+t$}, complete reduction of $T(r,s,t)$ to $\zeta$ values is achieved, in general, if \mbox{$w$} is a positive odd integer or if $t=1$.

\bigskip

Using the index shift identity~\eqref{equ.mjf11t4}, the identity of Theorem~\ref{thm.bvgewqc} can also be written as
\begin{equation}\label{equ.llgretd}
\begin{split}
T(r,s,t) &= ( - 1)^t \sum_{i = t - s + 2}^t {( - 1)^i \binom {2t-i-1}{t-1}\zeta (s - t + i)\zeta (r + 2t - i)}\\ 
&\qquad + ( - 1)^s \sum_{i = 2}^t {\binom {s+t-i-1}{s-1}\zeta (i)\zeta (r + s + t - i)}\\ 
&\quad\qquad - ( - 1)^s \sum_{i = 1}^t {\binom {s+t-i-1}{s-1}\sum_{\mu  = 1}^\infty  {\frac{{H_{\mu ,i} }}{{\mu ^{r + s + t - i} }}} }\,,
\end{split}
\end{equation}
giving, in particular, for $s\in\mathbb{Z^+}$ and \mbox{$r+s>1$},
\begin{equation}\label{equ.n4vyrlv}
\begin{split}
(-1)^{s-1}T(r,s,s) &=-\sum_{i = 2}^s {\binom {2s-i-1}{s-1}\left((-1)^i+1\right)\zeta (i)\zeta (r + 2s - i)}\\ 
&\quad\qquad +\sum_{i = 1}^s {\binom {2s-i-1}{s-1}\sum_{\mu  = 1}^\infty  {\frac{{H_{\mu ,i} }}{{\mu ^{r + 2s - i} }}} }\\
&\\
&=-2\sum_{i = 1}^{\lfloor s/2\rfloor} {\binom {2s-2i-1}{s-1}\zeta (2i)\zeta (r + 2s - 2i)}\\ 
&\qquad\quad +\sum_{i = 1}^s {\binom {2s-i-1}{s-1}\sum_{\mu  = 1}^\infty  {\frac{{H_{\mu ,i} }}{{\mu ^{r + 2s - i} }}} }\,.
\end{split}
\end{equation}

\section{On linear Euler sums }
Explicit evaluations, in zeta values, are known for the sums,  
\[
\sum_{\nu  = 1}^\infty  {\frac{{H_{\nu ,n} }}{{\nu ^s }}}  + \sum_{\nu  = 1}^\infty  {\frac{{H_{\nu ,s} }}{{\nu ^n }}}\mbox{ and }\sum_{\nu  = 1}^\infty  {\frac{{H_{\nu} }}{{\nu ^s }}}\,,
\]
for $n-1,s-1\in\mathbb{Z^+}$, as expressed in identity~\eqref{equ.csxx7lv} and in the identity of Theorem~\ref{th.nj1j516}, respectively. Evaluation formulas are also known in the literature (see~\cite{sofo12}) for the following sums, for $\mu\in\mathbb{Z^+}$:
\[
\sum_{\nu  = 1}^\infty  {\frac{{H_\nu  }}{{(\mu  + \nu )^s }}} \mbox{ and }\sum_{\nu  = 1}^\infty  {\frac{{H_\nu  }}{{\nu (\mu  + \nu )}}}\,.
\]
It is our aim in this section to extend these results by deriving evaluation formulas for the following sums:
\[
\sum_{\nu  = 1}^\infty  {\frac{{H_{\nu ,n} }}{{(\nu  + \mu )^s }}}  + \sum_{\nu  = 1}^\infty  {\frac{{H_{\nu ,s} }}{{(\nu  + \mu )^n }}} \,,\quad\sum_{\nu  = 1}^\infty  {\frac{{H_\nu  }}{{(\nu  + \mu )^s }}}\,,\quad \sum_{\nu  = 1}^\infty  {\frac{{H_{\nu ,n} }}{{\nu (\nu  + \mu )}}}
\]
and
\[
\sum_{\nu  = 1}^\infty  {\frac{{H_{\nu} }}{{\nu^s (\nu  + \mu )^t}}}\,.
\]
We will also derive variants of the Euler formula for $E(1,n)$ and obtain certain combinations of linear Euler sums that evaluate to zeta values.

\subsection{Extension of known results for Euler sums}
Consider the double sum
\[
f(\mu ,s,n) = \sum_{\nu  = 1}^\infty  {\sum_{i = 1}^{\mu  - 1} {\frac{1}{{(\nu  + \mu )^s (\nu  + i )^n }}} }\,.
\]
A change of the order of summation and shifting of the indices of summation give
\begin{equation}\label{equ.tg63b4s}
f(\mu ,s,n) = \sum_{i = 1}^{\mu  - 1} {\sum_{\nu  = 1}^\infty  {\frac{1}{{\nu ^n (\nu  + i )^s }}} }  - \sum_{i = 1}^{\mu  - 1} {\sum_{\nu  = 1}^{\mu  - i} {\frac{1}{{\nu ^n (\nu  + i )^s }}} }\,.
\end{equation}
But
\[
\begin{split}
f(\mu ,s,n) &= \sum_{\nu  = 1}^\infty  {\sum_{i = 1}^{\mu  - 1} {\frac{1}{{(\nu  + \mu )^s (\nu  + i )^n }}} }  = \sum_{\nu  = 1}^\infty  {\left\{ {\frac{1}{{(\nu  + \mu )^s }}\sum_{i = 1}^{\mu  - 1} {\frac{1}{{(\nu  + i )^n }}} } \right\}}\\
&=\sum_{\nu  = 1}^\infty  {\frac{{H_{\nu  + \mu  - 1,n} }}{{(\nu  + \mu )^s }}}  - \sum_{\nu  = 1}^\infty  {\frac{{H_{\nu ,n} }}{{(\nu  + \mu )^s }}}\,,\quad\mbox{ by identity~\eqref{equ.i4135lr} }\,,
\end{split}
\]
so that after index shifting and the use of identity \mbox{$H_{r-1,n}=H_{r,n}-1/r^n$}, we have
\begin{equation}\label{equ.c7iaghg}
f(\mu ,s,n) = \sum_{\nu  = 1}^\infty  {\frac{{H_{\nu ,n} }}{{\nu ^s }}}  - \sum_{\nu  = 1}^\infty  {\frac{{H_{\nu ,n} }}{{(\nu  + \mu )^s }}}  - \sum_{\nu  = 1}^\mu  {\frac{{H_{\nu ,n} }}{{\nu ^s }}}  - \zeta (n + s)+H_{\mu,n+s}\,.
\end{equation}
On equating~\eqref{equ.tg63b4s} and \eqref{equ.c7iaghg} we obtain the identity
\begin{equation}\label{equ.spedt89}
\begin{split}
\sum_{\nu  = 1}^\infty  {\frac{{H_{\nu ,n} }}{{\nu ^s }}}  - \sum_{\nu  = 1}^\infty  {\frac{{H_{\nu ,n} }}{{(\nu  + \mu )^s }}}  &= \sum_{i = 1}^{\mu  - 1} {\sum_{\nu  = 1}^\infty  {\frac{1}{{\nu ^n (\nu  + i )^s }}} }  - \sum_{i = 1}^{\mu  - 1} {\sum_{\nu  = 1}^{\mu  - i} {\frac{1}{{\nu ^n (\nu  + i )^s }}} }\\
&\qquad\qquad+ \sum_{\nu  = 1}^\mu  {\frac{{H_{\nu ,n} }}{{\nu ^s }}}  + \zeta (n + s) - H_{\mu,n + s}\,,
\end{split}
\end{equation}
which holds for \mbox{$n,s-1\in\mathbb{Z^+}$} and \mbox{$\mu\in\mathbb{N}_0$}, from which we will derive a couple of interesting results. First we state a lemma.

\begin{lemma}\label{thm.eclzyl6}
For $\mu\in\mathbb{N}_0$ and $s,n\in\mathbb{Z}$ such that either $sn\in\mathbb{Z^-}$ or $s+n\in\mathbb{Z^+}$ holds
\[
\begin{split}
\sum_{\nu  = 1}^\mu  {\frac{{H_{\mu  - \nu ,s} }}{{\nu ^n }}}  &= \sum_{j = 1}^n {\binom{n+s-j-1}{s-1}\sum_{\nu  = 1}^\mu  {\frac{{H_{\nu  - 1,j} }}{{\nu ^{n + s - j} }}} }\\
&\qquad+ \sum_{j = 1}^s {\binom{n+s-j-1}{n-1}\sum_{\nu  = 1}^\mu  {\frac{{H_{\nu  - 1,j} }}{{\nu ^{n + s - j} }}} }\\
&\\
&= \sum_{j = 1}^n {\binom{n+s-j-1}{s-1}\sum_{\nu  = 1}^\mu  {\frac{{H_{\nu,j} }}{{\nu ^{n + s - j} }}} }\\
&\qquad+ \sum_{j = 1}^s {\binom{n+s-j-1}{n-1}\sum_{\nu  = 1}^\mu  {\frac{{H_{\nu,j} }}{{\nu ^{n + s - j} }}} }\\
&\quad\qquad-\binom{n+s}nH_{\mu,n+s}\,.
\end{split}
\]
In particular,
\begin{equation}\label{equ.srgz6mr}
\begin{split}
\sum_{\nu  = 1}^\mu  {\frac{{H_{\mu  - \nu } }}{{\nu ^n }}}&= \sum_{j = 1}^n {\sum_{\nu  = 1}^\mu  {\frac{{H_{\nu  - 1,j} }}{{\nu ^{n - j + 1} }}} }  + \sum_{\nu  = 1}^\mu  {\frac{{H_{\nu  - 1} }}{{\nu ^n }}}\\  
&= \sum_{j = 2}^n {\sum_{\nu  = 1}^\mu  {\frac{{H_{\nu ,j} }}{{\nu ^{n - j + 1} }}} }  + 2\sum_{\nu  = 1}^\mu  {\frac{{H_\nu  }}{{\nu ^n }}}  - (n + 1)H_{\mu ,n + 1}
\end{split}
\end{equation}
and since
\[
2\sum_{\nu  = 1}^\mu {\frac{{H_{\nu ,n} }}{{\nu ^n }}}  = H_{\mu,n} ^2  + H_{\mu,2n}\,,\quad\mbox{(identity (3.25) of \cite{adegoke15}) }\,, 
\]
we have
\begin{equation}\label{equ.lp5bakz}
\sum_{\nu  = 1}^\mu  {\frac{{H_{\mu  - \nu } }}{\nu }}  = 2\sum_{\nu  = 1}^\mu  {\frac{{H_{\nu  - 1} }}{\nu }}  = H_\mu ^2  - H_{\mu ,2}\,.
\end{equation}
\end{lemma}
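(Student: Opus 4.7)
The plan is to unfold the inner generalized harmonic number into a double sum and then invoke the partial--fraction evaluation already established in Theorem~\ref{thm.u9a6spe}. First I would write
\[
\sum_{\nu=1}^\mu \frac{H_{\mu-\nu,s}}{\nu^n}=\sum_{\nu=1}^{\mu-1}\sum_{k=1}^{\mu-\nu}\frac{1}{\nu^n k^s}
\]
(the term $\nu=\mu$ vanishes because $H_{0,s}=0$), and then substitute $j=\nu+k$. This turns the double sum into
\[
\sum_{j=2}^{\mu}\sum_{\nu=1}^{j-1}\frac{1}{\nu^n (j-\nu)^s},
\]
with $j$ playing the role of the ``$\mu$'' appearing in Theorem~\ref{thm.u9a6spe}.

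For each fixed $j$, the inner sum is precisely the one evaluated by Theorem~\ref{thm.u9a6spe} with the parameter assignment $s\mapsto n$, $t\mapsto s$, $\mu\mapsto j$ and $N=j-1$. Since $\mu-N-1=0$ in this substitution, every $H_{\mu-N-1,\cdot}$ term vanishes and the theorem collapses to
\[
\sum_{\nu=1}^{j-1}\frac{1}{\nu^n(j-\nu)^s}=\sum_{i=1}^{n}\binom{n+s-i-1}{s-1}\frac{H_{j-1,i}}{j^{n+s-i}}+\sum_{i=1}^{s}\binom{n+s-i-1}{n-1}\frac{H_{j-1,i}}{j^{n+s-i}}.
\]
Summing this identity over $j\in\{2,\ldots,\mu\}$, extending the lower limit to $j=1$ at no cost (since $H_{0,i}=0$), interchanging the finite sums in $i$ and $j$, and relabelling $j\to\nu$ produces exactly the first of the two forms claimed in the lemma.

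For the second form I would substitute $H_{\nu-1,j}=H_{\nu,j}-\nu^{-j}$ inside every sum. The terms $H_{\nu,j}/\nu^{n+s-j}$ reproduce the structure of the first form verbatim, while the discarded pieces contribute $-H_{\mu,n+s}$ multiplied by
\[
\sum_{j=1}^{n}\binom{n+s-j-1}{s-1}+\sum_{j=1}^{s}\binom{n+s-j-1}{n-1}=\binom{n+s-1}{s}+\binom{n+s-1}{n}=\binom{n+s}{n},
\]
where the two partial sums are collapsed by the hockey--stick identity and the last step uses Pascal's rule together with $\binom{n+s-1}{s}=\binom{n+s-1}{n-1}$. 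The two particular identities \eqref{equ.srgz6mr} follow by setting $s=1$ in each form, and \eqref{equ.lp5bakz} is then obtained by further specializing $n=1$ and inserting the quoted identity $2\sum_{\nu=1}^\mu H_{\nu,n}/\nu^n=H_{\mu,n}^2+H_{\mu,2n}$.

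The main obstacle is essentially bookkeeping rather than depth: one must verify that after the index swap $j=\nu+k$ the parameter conditions of Theorem~\ref{thm.u9a6spe} (either $sn\in\mathbb{Z}^-$ or $s+n\in\mathbb{Z}^+$) are preserved, and that the sum of binomial coefficients produced by the shift $H_{\nu-1,j}\mapsto H_{\nu,j}$ telescopes via the hockey--stick identity to the single coefficient $\binom{n+s}{n}$ claimed on the right.
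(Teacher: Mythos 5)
Your argument is correct and is essentially the paper's own proof: the paper defines $K(\nu;n,s)=\sum_{i=1}^\nu H_{\nu-i,s}/i^n$, derives the recursion $K(\nu;n,s)-K(\nu-1;n,s)=\sum_{i=1}^{\nu-1}i^{-n}(\nu-i)^{-s}$, resolves the right-hand side with Theorem~\ref{thm.u9a6spe} (where $N=\nu-1$ makes the $H_{\mu-N-1,\cdot}$ terms vanish), and telescopes --- which yields exactly the double sum you obtain directly by unfolding $H_{\mu-\nu,s}$ and substituting $j=\nu+k$. Your handling of the second form via $H_{\nu-1,j}=H_{\nu,j}-\nu^{-j}$ and the hockey-stick/Pascal collapse to $\binom{n+s}{n}$ is also sound and merely makes explicit what the paper leaves to the reader.
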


\begin{proof}

For $\nu\in\mathbb{N}_0$ and $s,n\in\mathbb{Z}$, define
$$K(\nu;n,s)=\sum_{i=1}^\nu{\frac{H_{\nu-i,s}}{i^n}}\,,$$ and note that \mbox{$K(0;n,s)=0=K(1;n,s)$}. 
With this definition and the identity~\eqref{equ.i4135lr} in mind, we obtain the recursion
\[
K(\nu ;n,s) - K(\nu  - 1;n,s) = \sum_{i = 1}^{\nu  - 1} {\frac{1}{{i^n (\nu  - i)^s }}}\,, 
\]
from which the result follows by invoking Theorem~\ref{thm.u9a6spe} to resolve the right hand side and then summing both sides from $\nu=1$ to $\nu=\mu$, noting that the sum on the left hand side telescopes.

\end{proof}
\bigskip
From the identity of Lemma~\ref{thm.eclzyl6} we note the symmetry property \mbox{$K(\nu ;n,s)=K(\nu ;s,n)$}.
\subsubsection{Extension of the Euler formula of Theorem~\ref{th.nj1j516}}
Setting $n=1$ in the identity~\eqref{equ.spedt89}, employing the identity of Theorem~\ref{th.nj1j516} and using the identity~\eqref{equ.gqhxhjb} to simplify the double sums, we obtain
\[
\begin{split}
2\sum_{\nu  = 1}^\infty  {\frac{{H_\nu  }}{{(\nu  + \mu )^s }}}  &= s\,\zeta (s + 1) - \sum_{i = 1}^{s - 2} {\zeta (i + 1)\zeta (s - i)}\\
&\qquad+ 2\sum_{i = 1}^{s - 1} {H_{\mu  - 1,s - i} \left( {\zeta (i + 1) - H_{\mu ,i + 1} } \right)}\\
&\quad\qquad+ 2\sum_{i = 1}^\mu  {i^{ - s} \left( {H_{\mu  - i}  - H_i } \right)}\\
&\qquad\qquad  + 2H_{\mu ,s + 1}  - 2H_{\mu  - 1,s} H_\mu\,,
\end{split}
\]
which in view of identity~\eqref{equ.srgz6mr} now gives, after some algebra,
\begin{thm}\label{thm.ryy2yk3}
For $\mu,s-1\in\mathbb{Z^+}$ holds
\[
\begin{split}
2\sum_{\nu  = 1}^\infty  {\frac{{H_\nu  }}{{(\nu  + \mu )^s }}} &= 2H_{\mu  - 1} \left( {\zeta (s) - H_{\mu  - 1,s} } \right)\\
&\quad + s\left( {\zeta (s + 1) - H_{\mu  - 1,s + 1} } \right)\\
&\qquad- \sum_{i = 1}^{s - 2} {\left\{ {\left( {\zeta (i + 1) - H_{\mu  - 1,i + 1} } \right)\left( {\zeta (s - i) - H_{\mu  - 1,s - i} } \right)} \right\}}\,.
\end{split}
\]

In particular, $$2\sum_{\nu  = 1}^\infty  {\frac{{H_\nu  }}{{(\nu  + 1 )^s }}}  = s\,\zeta (s + 1) - \sum_{j = 1}^{s - 2} {\zeta (j + 1)\zeta (s - j)}\,,\quad s-1\in\mathbb{Z^+}\,.$$
\end{thm}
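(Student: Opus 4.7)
The plan is to specialize the master identity \eqref{equ.spedt89} to $n=1$ and then evaluate each of the four pieces on its right-hand side in closed form. Setting $n=1$ gives
\[
E(1,s) - \sum_{\nu=1}^\infty \frac{H_\nu}{(\nu+\mu)^s} = \sum_{i=1}^{\mu-1}\sum_{\nu=1}^\infty \frac{1}{\nu(\nu+i)^s} - \sum_{i=1}^{\mu-1}\sum_{\nu=1}^{\mu-i}\frac{1}{\nu(\nu+i)^s} + \sum_{\nu=1}^\mu \frac{H_\nu}{\nu^s} + \zeta(s+1) - H_{\mu,s+1},
\]
which I would multiply by $2$ and rearrange to isolate the target sum.

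First, I would replace $2E(1,s)$ via Euler's theorem (Theorem~\ref{th.nj1j516}) by $(s+2)\zeta(s+1) - \sum_{j=1}^{s-2}\zeta(j+1)\zeta(s-j)$. Next I would evaluate the innermost infinite sum using \eqref{equ.gqhxhjb} with the substitutions $\mu\to i$, $m\to s-1$, giving $H_i/i^s - \sum_{j=1}^{s-1}\zeta(j+1)/i^{s-j} + \sum_{j=1}^{s-1}H_{i,j+1}/i^{s-j}$; summing over $i=1,\ldots,\mu-1$ converts the $\zeta(j+1)$ block into the mixed products $\zeta(j+1)H_{\mu-1,s-j}$ visible in the intermediate formula displayed in the excerpt. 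The finite double sum $\sum_{i=1}^{\mu-1}\sum_{\nu=1}^{\mu-i}1/(\nu(\nu+i)^s)$ I would handle by switching the summation variable to $r=\nu+i$, collapsing it to $\sum_{r=1}^\mu H_{r-1}/r^s$.

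Putting these ingredients together reproduces the author's intermediate formula. At this stage, Lemma~\ref{thm.eclzyl6} (specifically \eqref{equ.srgz6mr} with $n=s$) provides the key to simplify the residual sums $\sum_{\nu=1}^\mu H_{\mu-\nu}/\nu^s$ and $\sum_{\nu=1}^\mu H_\nu/\nu^s$, and after the substitutions $H_\mu = H_{\mu-1} + 1/\mu$ and $H_{\mu,k} = H_{\mu-1,k} + 1/\mu^k$ one can reconcile the finite harmonic numbers with the shifted index $\mu-1$ required by the target.

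The principal obstacle is the final piece of algebra: recognizing that the many scattered single $\zeta(k)$, $H_{\mu-1,k}$ and double $H_{\mu-1,k}H_{\mu-1,\ell}$ terms assemble into the tidy product structure $(\zeta(i+1) - H_{\mu-1,i+1})(\zeta(s-i) - H_{\mu-1,s-i})$ in the final sum. This matching is best verified by expanding the target expression into its four constituent term types ($\zeta\zeta$, $\zeta H$, $H\zeta$, $HH$) and comparing coefficients against those produced above, using the symmetry $i\leftrightarrow s-1-i$ in the $\zeta(j+1)\zeta(s-j)$ contribution from Euler's formula. The special case $\mu=1$ is then immediate by inspection, since $H_{\mu-1}=0$ and $H_{\mu-1,k}=0$ for every $k$, collapsing the formula to the stated form.
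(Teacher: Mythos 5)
Your proposal follows essentially the same route as the paper: specialize \eqref{equ.spedt89} to $n=1$, substitute Euler's formula for $2E(1,s)$, reduce the infinite double sum via \eqref{equ.gqhxhjb}, and finish with Lemma~\ref{thm.eclzyl6}-type finite symmetry identities to assemble the product terms $\left(\zeta(i+1)-H_{\mu-1,i+1}\right)\left(\zeta(s-i)-H_{\mu-1,s-i}\right)$; I checked that the coefficient matching you defer to the end does close up. The only (harmless, arguably cleaner) deviation is that you collapse the finite double sum by the reindexing $r=\nu+i$ to $\sum_{r=1}^{\mu}H_{r-1}/r^s$, which cancels neatly against $\sum_{\nu=1}^{\mu}H_{\nu}/\nu^s$, whereas the paper routes this term through its $H_{\mu-\nu}$ machinery and identity~\eqref{equ.srgz6mr}.
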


Using a different formalism, the identity of Theorem~\ref{thm.ryy2yk3} was first derived in~\cite{sofo12}.
\subsubsection{Extension of the symmetry relation for linear Euler sums}
From the identity~\eqref{equ.spedt89} and the symmetry relations
\[
\sum_{\nu  = 1}^\infty  {\frac{{H_{\nu ,n} }}{{\nu ^s }}}  + \sum_{\nu  = 1}^\infty  {\frac{{H_{\nu ,s} }}{{\nu ^n }}}  = \zeta (n + s) + \zeta (n)\zeta (s)
\]
and
\[
\sum_{\nu  = 1}^\mu  {\frac{{H_{\nu ,n} }}{{\nu ^s }}}  + \sum_{\nu  = 1}^\mu  {\frac{{H_{\nu ,s} }}{{\nu ^n }}}  = H_{\mu ,n + s}  + H_{\mu ,n} H_{\mu ,s}\,,\quad \mbox{(equation~(3.22) of~\cite{adegoke15})}\,, 
\]
we have
\begin{equation}\label{equ.vdgftrd}
\begin{split}
&\sum_{\nu  = 1}^\infty  {\frac{{H_{\nu ,n} }}{{(\nu  + \mu )^s }}}  + \sum_{\nu = 1}^\infty  {\frac{{H_{\nu ,s} }}{{(\nu  + \mu )^n }}} \\
&\qquad= H_{\mu ,n + s}  - H_{\mu ,n} H_{\mu ,s}\\ 
&\quad\qquad - \zeta (n + s) + \zeta (n)\zeta (s)\\
&\qquad\qquad - \sum_{i = 1}^{\mu  - 1} {\sum_{\nu  = 1}^\infty  {\frac{1}{{\nu ^n (\nu  + i )^s }}} }  - \sum_{i = 1}^{\mu  - 1} {\sum_{\nu  = 1}^\infty  {\frac{1}{{\nu ^s (\nu  + i )^n }}} }\\ 
&\quad\qquad\qquad+ \sum_{i = 1}^{\mu  - 1} {\sum_{\nu  = 1}^{\mu  - i} {\frac{1}{{\nu ^n (\nu  + i )^s }}} }  + \sum_{i = 1}^{\mu  - 1} {\sum_{\nu  = 1}^{\mu  - i} {\frac{1}{{\nu ^s (\nu  + i )^n }}} }\,. 
\end{split}
\end{equation}
Using the identity of Corollary~\ref{thm.p1iwzvm} we establish that
\begin{equation}\label{equ.hfmh2se}
\begin{split}
\sum_{i = 1}^{\mu  - 1} {\sum_{\nu  = 1}^\infty  {\frac{1}{{\nu ^n (\nu  + i )^s }}} } & = \sum_{j = 0}^{n - 2} {\binom{s+j-1}j( - 1)^j \zeta (n - j)H_{\mu  - 1,s + j} }\\
&\quad+ ( - 1)^n \sum_{j = 0}^{s - 2} {\binom{n+j-1}j\zeta (s - j)H_{\mu  - 1,n + j} }\\
&\qquad+ ( - 1)^{n - 1} \sum_{j = 0}^{s - 1} {\left\{ {\binom{n+j-1}j\sum_{i = 1}^{\mu  - 1} {\frac{{H_{i,s - j} }}{{i^{n + j} }}} } \right\}}\,.
\end{split}
\end{equation}
From the identity of Theorem~\ref{thm.b2i7lfz} and the identity of Lemma~\ref{thm.eclzyl6} follows that, for $n+s>0$, $n,s\in\mathbb{Z}$ and $\mu\in\mathbb{Z^+}$,
\begin{equation}\label{equ.te7cg80}
\begin{split}
\sum_{i = 1}^{\mu  - 1} {\sum_{\nu  = 1}^{\mu  - i} {\frac{1}{{\nu ^n (\nu  + i )^s }}} }  &= \sum_{j = 0}^{n - 1} {\left\{ {\binom{s+j-1}j( - 1)^j \sum_{k = 1}^{s + j} {\left[ {\binom{n+s-k-1}{n-j-1}\sum_{i = 1}^\mu  {\frac{{H_{i - 1,k} }}{{i^{n + s - k} }}} } \right]} } \right\}}\\ 
&\quad + \sum_{j = 0}^{n - 1} {\left\{ {\binom{s+j-1}j( - 1)^j \sum_{k = 1}^{n - j} {\left[ {\binom{n+s-k-1}{s+j-1}\sum_{i = 1}^\mu  {\frac{{H_{i - 1,k} }}{{i^{n + s - k} }}} } \right]} } \right\}}\\ 
&\qquad + ( - 1)^n \sum_{j = 0}^{s - 1} {\binom{n+j-1}jH_{\mu ,s - j} H_{\mu  - 1,n + j} }\\ 
&\quad\qquad + ( - 1)^{n - 1} \sum_{j = 0}^{s - 1} {\left\{ {\binom{n+j-1}j\sum_{i = 1}^{\mu  - 1} {\frac{{H_{i,s - j} }}{{i^{n + j} }}} } \right\}}\,.
\end{split}
\end{equation}
Plugging~\eqref{equ.hfmh2se} and \eqref{equ.te7cg80} in~\eqref{equ.vdgftrd} we prove
\begin{thm}\label{thm.lv0bcn0}
For $\mu$, $n-1$ and $s-1\in\mathbb{Z^+}$ holds
\[
\begin{split}
&\sum_{\nu  = 1}^\infty  {\frac{{H_{\nu ,n} }}{{(\nu  + \mu )^s }}}  + \sum_{\nu  = 1}^\infty  {\frac{{H_{\nu ,s} }}{{(\nu  + \mu )^n }}}\\
&\quad= H_{\mu ,n + s}  - H_{\mu ,n} H_{\mu ,s}- \zeta (n + s) + \zeta (n)\zeta (s)\\
&\qquad- \sum_{j = 0}^{n - 2} {\binom{s+j-1}j\left[ {( - 1)^j  + ( - 1)^s } \right]\zeta (n - j)H_{\mu  - 1,s + j} }\\
&\quad\qquad - \sum_{j = 0}^{s - 2} {\binom{n+j-1}j\left[ {( - 1)^j  + ( - 1)^n } \right]\zeta (s - j)H_{\mu  - 1,n + j} }\\
&\quad+ ( - 1)^n \sum_{j = 0}^{s - 1} {\binom{n+j-1}jH_{\mu ,s - j} H_{\mu  - 1,n + j} }+ ( - 1)^s \sum_{j = 0}^{n - 1} {\binom{s+j-1}jH_{\mu ,n - j} H_{\mu  - 1,s + j} }\\
&\quad+ \sum_{j = 0}^{n - 1} {\left\{ {\binom{s+j-1}j( - 1)^j \sum_{k = 1}^{s + j} {\left[ {\binom{n+s-k-1}{n-j-1}\sum_{i = 1}^\mu  {\frac{{H_{i - 1,k} }}{{i^{n + s - k} }}} } \right]} } \right\}}\\
&\quad\qquad+ \sum_{j = 0}^{s - 1} {\left\{ {\binom{n+j-1}j( - 1)^j \sum_{k = 1}^{n + j} {\left[ {\binom{n+s-k-1}{s-j-1}\sum_{i = 1}^\mu  {\frac{{H_{i - 1,k} }}{{i^{n + s - k} }}} } \right]} } \right\}}\\
&\qquad\qquad+ \sum_{j = 0}^{n - 1} {\left\{ {\binom{s+j-1}j( - 1)^j \sum_{k = 1}^{n - j} {\left[ {\binom{n+s-k-1}{s+j-1}\sum_{i = 1}^\mu  {\frac{{H_{i - 1,k} }}{{i^{n + s - k} }}} } \right]} } \right\}}\\
&\quad\qquad+ \sum_{j = 0}^{s - 1} {\left\{ {\binom{n+j-1}j( - 1)^j \sum_{k = 1}^{s - j} {\left[ {\binom{n+s-k-1}{n+j-1}\sum_{i = 1}^\mu  {\frac{{H_{i - 1,k} }}{{i^{n + s - k} }}} } \right]} } \right\}}\,.
\end{split}
\]
In particular
\[
\sum_{\nu  = 1}^\infty  {\frac{{H_{\nu ,n} }}{{(\nu  + 1 )^s }}}  + \sum_{\nu  = 1}^\infty  {\frac{{H_{\nu ,s} }}{{(\nu  + 1 )^n }}}=\zeta (n)\zeta (s) - \zeta (n + s)
\]
and
\[
\begin{split}
&2\sum_{\nu  = 1}^\infty  {\frac{{H_{\nu ,n} }}{{(\nu  + \mu )^n }}}  = H_{\mu ,2n}  - \zeta (2n) - H_{\mu ,n}^2  + \zeta (n)^2\\ 
&\qquad\quad + ( - 1)^{n - 1} 4\sum_{j = 1}^{\left\lfloor {n/2} \right\rfloor } {\binom{2n-2j-1}{n-1}\zeta (2j)H_{\mu  - 1,2n - 2j} }\\
&\qquad - ( - 1)^{n - 1} 2\sum_{j = 1}^n {\binom{2n-j-1}{n-1}( - 1)^j H_{\mu  - 1,2n - j} H_{\mu ,j} }\\ 
&\quad\qquad - ( - 1)^{n - 1} 2\sum_{j = 1}^n {\left\{ {\binom{2n-j-1}{n-1}( - 1)^j \sum_{k = 1}^{2n - j} {\left[ {\binom{2n-k-1}{j-1}\sum_{i = 1}^\mu  {\frac{{H_{i - 1,k} }}{{i^{2n - k} }}} } \right]} } \right\}}\\ 
&\qquad\qquad - ( - 1)^{n - 1} 2\sum_{j = 1}^n {\left\{ {\binom{2n-j-1}{n-1}( - 1)^j \sum_{k = 1}^j {\left[ {\binom{2n-k-1}{2n-j-1}\sum_{i = 1}^\mu  {\frac{{H_{i - 1,k} }}{{i^{2n - k} }}} } \right]} } \right\}}\,. 
\end{split}
\]

\end{thm}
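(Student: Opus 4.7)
The plan is to substitute the evaluations \eqref{equ.hfmh2se} and \eqref{equ.te7cg80} (together with their images under $n\leftrightarrow s$) into the identity \eqref{equ.vdgftrd}, which has already been set up by adding \eqref{equ.spedt89} to its $n\leftrightarrow s$ swap and then invoking the infinite symmetry relation \eqref{equ.csxx7lv} and its finite counterpart (equation (3.22) of \cite{adegoke15}). After this reduction, \eqref{equ.vdgftrd} expresses the target quantity as an explicit combination of $H_{\mu,\cdot}$ and $\zeta(\cdot)$ values plus four double inner sums, so the entire task is to evaluate those four sums in closed form.

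For the two infinite inner sums $\sum_{i=1}^{\mu-1}\sum_{\nu=1}^\infty 1/[\nu^n(\nu+i)^s]$ and its $n\leftrightarrow s$ partner, I would apply Corollary~\ref{thm.p1iwzvm} to the inner $\nu$-summation; then summing over $i$ from $1$ to $\mu-1$ against weights $1/i^{s+\cdot}$ turns the $\zeta$-pieces into products $\zeta(\cdot)\,H_{\mu-1,\cdot}$, while the Hurwitz-type $-H_{i,\cdot}$ pieces become the canonical inner sums $\sum_{i=1}^{\mu-1} H_{i,s-j}/i^{n+j}$ that appear in \eqref{equ.hfmh2se}. For the two truncated inner sums $\sum_{i=1}^{\mu-1}\sum_{\nu=1}^{\mu-i} 1/[\nu^n(\nu+i)^s]$ and its swap, I would apply Theorem~\ref{thm.b2i7lfz} on the inner $\nu$-summation, which produces terms $H_{\mu-i,\cdot}/i^{\cdot}$, $H_{\mu,\cdot}/i^{\cdot}$ and $H_{i,\cdot}/i^{\cdot}$. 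The $i$-summation of the $H_{\mu-i,\cdot}/i^{\cdot}$ piece is precisely the kernel $K(\mu;n,s)$ handled by Lemma~\ref{thm.eclzyl6}, which reshapes it as a linear combination of $\sum_{i=1}^\mu H_{i-1,k}/i^{n+s-k}$; collecting everything produces \eqref{equ.te7cg80}.

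With all four evaluations inserted into \eqref{equ.vdgftrd}, the remaining work is bookkeeping. The $\zeta(n-j)H_{\mu-1,s+j}$ contributions arising from \eqref{equ.hfmh2se} and from its $n\leftrightarrow s$ image combine with signs $(-1)^j+(-1)^s$, and similarly $\zeta(s-j)H_{\mu-1,n+j}$ combines with $(-1)^j+(-1)^n$, matching the two displayed coefficient brackets in the theorem. The $H_{\mu,n-j}H_{\mu-1,s+j}$ and $H_{\mu,s-j}H_{\mu-1,n+j}$ products come directly from the Theorem~\ref{thm.b2i7lfz} contribution in \eqref{equ.te7cg80}, with signs $(-1)^s$ and $(-1)^n$. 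Finally, the four weighted families of $\sum_{i=1}^\mu H_{i-1,k}/i^{n+s-k}$ are assembled with binomials $\binom{n+s-k-1}{n-j-1}$, $\binom{n+s-k-1}{s+j-1}$, $\binom{n+s-k-1}{s-j-1}$, $\binom{n+s-k-1}{n+j-1}$, exactly as written.

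The main obstacle is this final accounting, and specifically the verification that the $H_{\mu,n+s}$ and $\zeta(n+s)$ scalars coming from endpoint corrections in Lemma~\ref{thm.eclzyl6} (the term $\binom{n+s}{n}H_{\mu,n+s}$ there) cancel against the $H_{\mu,n+s}$ and $\zeta(n+s)$ already present in \eqref{equ.vdgftrd}, so that no stray scalar survives beyond those retained in the theorem. The two particular cases then drop out as checks: at $\mu=1$ the sums $H_{\mu-1,\cdot}$ and the inner $\sum_{i=1}^{\mu-1}$ all vanish while $H_{1,\cdot}=1$, collapsing everything to $\zeta(n)\zeta(s)-\zeta(n+s)$; and at $s=n$ the four binomial-weighted inner sums pair up to yield the two families displayed in the $n=s$ specialization, while the $\zeta$-pieces simplify through parity using $(-1)^j+(-1)^n$.
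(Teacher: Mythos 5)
Your proposal follows exactly the paper's own route: it derives \eqref{equ.vdgftrd} from \eqref{equ.spedt89} together with the infinite and finite symmetry relations, obtains \eqref{equ.hfmh2se} from Corollary~\ref{thm.p1iwzvm} and \eqref{equ.te7cg80} from Theorem~\ref{thm.b2i7lfz} combined with Lemma~\ref{thm.eclzyl6}, and then substitutes these into \eqref{equ.vdgftrd} and collects terms. The bookkeeping you outline, including the pairing of the $(-1)^j+(-1)^s$ and $(-1)^j+(-1)^n$ coefficients and the cancellation of the $\binom{n+s}{n}H_{\mu,n+s}$ endpoint term, matches the paper's argument, so the proof is correct and essentially identical to the one given there.
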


\subsubsection{Evaluation of a certain type of Euler sum}
Setting $s=1$ in the identity~\eqref{equ.spedt89}, employing the identity~\eqref{equ.k9w7fxv} and the identity resulting from setting $n=0$ in the identity of Theorem~\ref{thm.x1nstmi} to simplify the double sums, we obtain
\[
\begin{split}
\mu \sum_{\nu  = 1}^\infty  {\frac{{H_{\nu ,n} }}{{\nu (\nu  + \mu )}}} &= ( - 1)^{n - 1} \sum_{j = 1}^{n - 1} {( - 1)^j H_{\mu  - 1,n - j}\, \zeta (j + 1)}\\
&\quad+ ( - 1)^{n - 1} \sum_{i = 1}^{\mu  - 1} {\sum_{k = 1}^n {( - 1)^k i^{k - n - 1} H_{\mu  - i,k} } }\\
&\qquad  + \left( {1 + ( - 1)^{n - 1} } \right)H_\mu  H_{\mu  - 1,n}\\
&\quad\qquad- \sum_{i = 1}^{\mu  - 1} {i^{ - n} H_i }  + \zeta (n + 1)\,,
\end{split}
\]
which, on account of Lemma~\ref{thm.eclzyl6} yields the evaluation:
\begin{thm}\label{thm.x9at23s} For $\mu,n\in\mathbb{Z^+}$ holds
\[
\begin{split}
\mu \sum_{\nu  = 1}^\infty  {\frac{{H_{\nu ,n} }}{{\nu (\nu  + \mu )}}} &= ( - 1)^{n - 1} \sum_{j = 1}^{n - 1} {( - 1)^j H_{\mu  - 1,n - j}\, \zeta (j + 1)}\\ 
&\quad + ( - 1)^{n - 1} \sum_{k = 1}^n \left\{( - 1)^k{\sum_{j = 1}^{n - k + 1} \left[{ { \binom{n-j}{k-1}\sum_{i = 1}^{\mu} {\frac{{H_{i - 1,j} }}{{i^{n - j + 1} }}} }} \right]}\right\}\\ 
&\qquad + ( - 1)^{n - 1} \sum_{k = 1}^n\left\{ ( - 1)^k{\sum_{j = 1}^k\left[ {{\binom{n-j}{n-k}\sum_{i = 1}^{\mu} {\frac{{H_{i - 1,j} }}{{i^{n - j + 1} }}} }} \right]}\right\}\\ 
&\quad\qquad + \left( {1 + ( - 1)^{n - 1} } \right)H_\mu  H_{\mu  - 1,n}\\
&\qquad\qquad - \sum_{i = 1}^{\mu  - 1} {\frac{H_i}{i^n} }  + \zeta (n + 1)\,.
\end{split}
\]
In particular, setting $n=1$ and using~\eqref{equ.lp5bakz} gives
\[
2\mu \sum_{\nu  = 1}^\infty  {\frac{{H_\nu  }}{{\nu (\nu  + \mu )}}}  = H_{\mu  - 1}^2  + H_{\mu  - 1,2}  + 2\zeta (2)\,,
\]
which (but not the general Theorem~\ref{thm.x9at23s}) was also derived in~\cite{sofo12}; while setting $\mu=1$ in the theorem gives
\[
\sum_{\nu  = 1}^\infty  {\frac{{H_{\nu ,n} }}{{\nu (\nu + 1 )}}}  = \zeta (n + 1)\,,
\]
which is a more general case of the result reported in~\cite{chu12} for $n=1$.

\end{thm}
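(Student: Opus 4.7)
\textbf{Proof plan for Theorem~\ref{thm.x9at23s}.} The starting point is identity~\eqref{equ.spedt89} with $s=1$. On the left we get
\[
\sum_{\nu=1}^\infty H_{\nu,n}\!\left(\frac{1}{\nu}-\frac{1}{\nu+\mu}\right)=\mu\sum_{\nu=1}^\infty\frac{H_{\nu,n}}{\nu(\nu+\mu)},
\]
via \eqref{equ.yav4944}. On the right, three kinds of objects appear: the infinite double sum $A=\sum_{i=1}^{\mu-1}\sum_{\nu=1}^\infty 1/[\nu^n(\nu+i)]$, the finite double sum $B=\sum_{i=1}^{\mu-1}\sum_{\nu=1}^{\mu-i}1/[\nu^n(\nu+i)]$, and the tail term $\sum_{\nu=1}^{\mu}H_{\nu,n}/\nu+\zeta(n+1)-H_{\mu,n+1}$. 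The strategy is to evaluate $A$ and $B$ in closed form and then reshape the resulting finite $H$-sums via Lemma~\ref{thm.eclzyl6}.

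To handle $A$, I would apply identity~\eqref{equ.k9w7fxv} with $m=n$ and $\mu\mapsto i$, namely
\[
\sum_{\nu=1}^\infty\frac{1}{\nu^n(\nu+i)}=\frac{(-1)^{n-1}}{i^n}\!\left[H_i+\sum_{j=1}^{n-1}(-1)^j i^j\zeta(j+1)\right],
\]
and then sum over $i=1,\ldots,\mu-1$. This produces the term $(-1)^{n-1}\sum_{j=1}^{n-1}(-1)^j\zeta(j+1)H_{\mu-1,n-j}$ together with a single power sum $(-1)^{n-1}\sum_{i=1}^{\mu-1}H_i/i^n$ that will partly persist to the final formula.

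For $B$, I would use the $n=0$ case of Theorem~\ref{thm.x1nstmi} with the replacements $\mu\mapsto i$, $N\mapsto\mu-i$, $m\mapsto n$, which (after the obvious telescoping of $H_\mu - H_i$) yields a closed form of the inner finite sum as $\sum_{k=0}^{n-1}(-1)^k H_{\mu-i,n-k}/i^{1+k}$ plus a $(-1)^n[H_\mu-H_i]/i^n$ piece. Summing over $i$ from $1$ to $\mu-1$, the $H_i/i^n$ contributions from $A$ and $B$ combine to leave the single term $-\sum_{i=1}^{\mu-1}H_i/i^n$ appearing in the conclusion, while the $H_\mu H_{\mu-1,n}$ term generates the factor $(1+(-1)^{n-1})H_\mu H_{\mu-1,n}$ together with its counterpart from the $\sum_{\nu=1}^\mu H_{\nu,n}/\nu$ tail after writing $H_{\nu,n}=H_{\nu-1,n}+1/\nu^n$ (the $1/\nu^n$ piece telescopes against $-H_{\mu,n+1}+\zeta(n+1)$).

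The essential work is then to rewrite $\sum_{i=1}^{\mu-1}H_{\mu-i,n-k}/i^{1+k}$, after the index shift $j=n-k$, as $K(\mu;n-j+1,j)=\sum_{i=1}^\mu H_{\mu-i,j}/i^{n-j+1}$, and invoke Lemma~\ref{thm.eclzyl6} with $(n,s)\mapsto(n-j+1,j)$. This produces the two nested binomial sums $\sum_{k}\binom{n-j}{k-1}\sum_{\nu=1}^\mu H_{\nu-1,j}/\nu^{n-j+1}$ and $\sum_{k}\binom{n-j}{n-k}\sum_{\nu=1}^\mu H_{\nu-1,j}/\nu^{n-j+1}$ visible in the theorem. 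The main obstacle is purely combinatorial: keeping the signs $(-1)^j$, $(-1)^{n-j}$, $(-1)^{n-1}$ consistent across the two sources and performing the binomial index relabelings $j\leftrightarrow k$ so that the final expression matches the stated symmetric pair of double sums. Once this bookkeeping is done, both special cases ($n=1$ via \eqref{equ.lp5bakz}, and $\mu=1$ where all finite sums collapse to $0$) follow by direct substitution.
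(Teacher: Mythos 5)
Your proposal follows the paper's own derivation essentially step for step: set $s=1$ in~\eqref{equ.spedt89}, evaluate the infinite double sum via~\eqref{equ.k9w7fxv}, evaluate the finite double sum via the $n=0$ case of Theorem~\ref{thm.x1nstmi}, and then apply Lemma~\ref{thm.eclzyl6} to the resulting sums $\sum_{i=1}^{\mu}H_{\mu-i,j}/i^{n-j+1}$. The only quibble is a bookkeeping slip in your narration --- the $H_i/i^n$ contributions of $A$ and $-B$ actually cancel, and the surviving $-\sum_{i=1}^{\mu-1}H_i/i^n$ together with one copy of $H_\mu H_{\mu-1,n}$ comes from the tail $\sum_{\nu=1}^{\mu}H_{\nu,n}/\nu - H_{\mu,n+1}$ --- but this does not affect the validity of the plan.
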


If we multiply through~\eqref{equ.p5bh6gf} by $H_\nu$ and sum over $\nu$ while making use of the identities of Theorems~\ref{th.nj1j516}, \ref{thm.ryy2yk3} and \ref{thm.x9at23s}, we obtain
\begin{thm}
For $\mu,s,t\in\mathbb{Z^+}$ holds
\[
\begin{split}
&2\sum_{\nu  = 1}^\infty  {\frac{{H_\nu  }}{{\nu ^s (\nu  + \mu )^t }}}  = ( - 1)^{s - 1} \binom{s+t-2}{s-1}\frac{{H_{\mu  - 1}^2  + H_{\mu  - 1,2}  + 2\zeta (2)}}{{\mu ^{s + t - 1} }}\\
&\qquad\qquad + \sum_{i = 0}^{s - 2} {\binom{t+i-1}i\frac{{( - 1)^i (s - i + 2)\zeta (s - i + 1)}}{{\mu ^{t + i} }}}\\ 
&\qquad\qquad\quad - \sum_{i = 0}^{s - 2} {\left\{ {\binom{t+i-1}i\frac{{( - 1)^i }}{{\mu ^{t + i} }}\sum_{j = 1}^{s - i - 2} {\zeta (j + 1)\zeta (s - i - j)} } \right\}}\\ 
&\qquad\quad + 2( - 1)^s H_{\mu  - 1} \sum_{i = 0}^{t - 2} {\binom{s+i-1}i\frac{{\left( {\zeta (t - i) - H_{\mu  - 1,t - i} } \right)}}{{\mu ^{s + i} }}}\\
&\quad + ( - 1)^s \sum_{i = 0}^{t - 2} {\binom{s+i-1}i\frac{{(t - i)\left( {\zeta (t - i + 1) - H_{\mu  - 1,t - i + 1} } \right)}}{{\mu ^{s + i} }}}\\
&\qquad - ( - 1)^s \sum_{i = 0}^{t - 2} {\left\{ {\binom{s+i-1}i\frac{1}{{\mu ^{s + i} }}\sum_{j = 1}^{t - i - 2} {\left( {\zeta (j + 1) - H_{\mu  - 1,j + 1} } \right)\left( {\zeta (t - i - j) - H_{\mu  - 1,t - i - j} } \right)} } \right\}}\,. 
\end{split}
\]

\end{thm}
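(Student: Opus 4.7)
The plan is to take the partial fraction decomposition \eqref{equ.p5bh6gf}, multiply both sides by $H_\nu$, and sum from $\nu=1$ to $\nu=\infty$. This produces the left-hand sum $\sum_{\nu=1}^\infty H_\nu/(\nu^s(\nu+\mu)^t)$ and splits the right-hand side into three recognizable blocks, each of which has already been evaluated earlier in the paper. Doubling the resulting identity turns the $E(1,\cdot)$, $\sum H_\nu/(\nu+\mu)^\cdot$, and $\mu\sum H_\nu/(\nu(\nu+\mu))$ contributions into exactly the ``$\times 2$'' evaluations supplied by Theorems~\ref{th.nj1j516}, \ref{thm.ryy2yk3}, and \ref{thm.x9at23s} (the last in its $n=1$ specialization), which is why the factor $2$ appears on the left of the statement.

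More precisely, the first block from \eqref{equ.p5bh6gf} contributes
\[
\sum_{i=0}^{s-2}\binom{t+i-1}{i}\frac{(-1)^i}{\mu^{t+i}}\,2E(1,s-i),
\]
and substituting Theorem~\ref{th.nj1j516} produces both the $(s-i+2)\zeta(s-i+1)$ line and the $\zeta(j+1)\zeta(s-i-j)$ double sum that appear in the second and third lines of the claimed formula. The second block contributes
\[
(-1)^s\sum_{i=0}^{t-2}\binom{s+i-1}{i}\frac{1}{\mu^{s+i}}\,2\sum_{\nu=1}^\infty\frac{H_\nu}{(\nu+\mu)^{t-i}},
\]
and Theorem~\ref{thm.ryy2yk3} expands the inner factor into the $H_{\mu-1}(\zeta(t-i)-H_{\mu-1,t-i})$ piece, the $(t-i)(\zeta(t-i+1)-H_{\mu-1,t-i+1})$ piece, and the convolution involving $(\zeta(j+1)-H_{\mu-1,j+1})(\zeta(t-i-j)-H_{\mu-1,t-i-j})$ — i.e.\ the last three lines of the boxed identity. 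The third block contributes
\[
(-1)^{s-1}\binom{s+t-2}{s-1}\frac{1}{\mu^{s+t-1}}\cdot 2\mu\sum_{\nu=1}^\infty\frac{H_\nu}{\nu(\nu+\mu)},
\]
and the $n=1$ corollary to Theorem~\ref{thm.x9at23s} replaces the trailing factor by $H_{\mu-1}^2+H_{\mu-1,2}+2\zeta(2)$, producing the very first line of the target formula.

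I expect the conceptual content to be short; the main labor is bookkeeping. The hardest parts will be (i) correctly transporting the signs $(-1)^i$, $(-1)^s$, $(-1)^{s-1}$ of \eqref{equ.p5bh6gf} through the three substitutions and into the stated expression without dropping or doubling a sign, and (ii) verifying that each of the three rearranged infinite series converges absolutely so that term-by-term summation against $H_\nu$ is legal — this is immediate from $s,t\in\mathbb{Z^+}$, since after the partial-fraction split the summands behave no worse than those of $E(1,s-i)$, $\sum H_\nu/(\nu+\mu)^{t-i}$, and $\sum H_\nu/(\nu(\nu+\mu))$, all of which are shown to converge in the cited theorems. Once the three pieces are added and the overall factor of $2$ on the left absorbs the $2$'s built into Theorems~\ref{th.nj1j516}, \ref{thm.ryy2yk3}, \ref{thm.x9at23s}, the result is exactly as stated, with the degenerate cases $s=1$ and $t=1$ handled by the convention that an empty sum vanishes.
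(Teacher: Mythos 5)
Your proposal is correct and is precisely the paper's own argument: the author likewise multiplies~\eqref{equ.p5bh6gf} by $H_\nu$, sums over $\nu$, and invokes Theorems~\ref{th.nj1j516}, \ref{thm.ryy2yk3} and \ref{thm.x9at23s} (the last in its $n=1$ specialization) to evaluate the three resulting blocks. Your accounting of where each line of the stated identity comes from, and of the role of the overall factor of $2$, matches the paper exactly.
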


\subsection{Variants of Euler formula for $E(1,2s+1)$ and certain combinations of linear Euler sums that evaluate to zeta values}
Using the reflection symmetry~\eqref{equ.qg21457} of the Tornheim double series in the identity of Theorem~\ref{thm.bvgewqc} and setting \mbox{$r=s+1$} to ensure that $r$ and $s$ have different parity, we obtain
\begin{equation}\label{equ.ylv3nb4}
\begin{split}
&\sum_{i = 1}^{t - 1} {\left[ {\binom {i+s-1}s\frac{{2s + i - 1}}{{i + s - 1}}\sum_{\mu  = 1}^\infty  {\frac{{H_{\mu ,t - i + 1} }}{{\mu ^{2s + i} }}} } \right]}\\
&\quad  + \frac{{2s + t - 1}}{{s + t - 1}}\binom {s+t-1}{t-1}\sum_{\mu  = 1}^\infty  {\frac{{H_\mu  }}{{\mu ^{2s + t} }}}\\
&\qquad= \sum_{i = 1}^{t - 1} {\binom {i+s-1}s\frac{{2s + i - 1}}{{i + s - 1}}\zeta (t - i + 1)\zeta (2s + i)}\\
&\quad\qquad+ ( - 1)^{s + 1} \sum_{i = 0}^{s - 1} {( - 1)^i \binom {i+t-1}{t-1}\frac{{2i + t - 1}}{{i + t - 1}}\zeta (s - i + 1)\zeta (s + i + t)}\,.
\end{split}
\end{equation}

\subsubsection{Variants of Euler formula for $E(1,2s+1)$}
On setting $t=1$ in~\eqref{equ.ylv3nb4}, we obtain
\begin{thm}\label{thm.ctfkdby}
For $s\in\mathbb{Z^+} holds$
\[
2( - 1)^{s + 1}\sum_{\mu  = 1}^\infty  {\frac{{H_\mu  }}{{\mu ^{2s + 1} }}}  =  \zeta (s + 1)^2  + 2\sum_{i = 1}^{s - 1} {( - 1)^i \zeta (s - i + 1)\zeta (s + i + 1)}\,, 
\]
or equivalently, using the index shift identity,
\begin{equation}\label{equ.nvv0110}
2\sum_{\mu  = 1}^\infty  {\frac{{H_\mu  }}{{\mu ^{2s + 1} }}}  = ( - 1)^{s - 1} \zeta (s + 1)^2  + 2\sum_{i = 0}^{s - 2} {( - 1)^i \zeta (i + 2)\zeta (2s - i)}\,.
\end{equation}

\end{thm}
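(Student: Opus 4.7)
The plan is to specialize the master identity \eqref{equ.ylv3nb4} to $t=1$ and collect terms. First I would observe that both sums ranging over $i=1,\ldots,t-1=0$ (the first sum on the left and the first sum on the right) become empty and drop out, leaving on the left only the Euler sum $\sum_{\mu=1}^\infty H_\mu/\mu^{2s+1}$ with coefficient
\[
\frac{2s+t-1}{s+t-1}\binom{s+t-1}{t-1}\bigg|_{t=1}=\frac{2s}{s}\binom{s}{0}=2.
\]
So the left-hand side reduces cleanly to $2\sum_{\mu=1}^\infty H_\mu/\mu^{2s+1}$.

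On the right, only the sum
\[
(-1)^{s+1}\sum_{i=0}^{s-1}(-1)^i\binom{i+t-1}{t-1}\frac{2i+t-1}{i+t-1}\zeta(s-i+1)\zeta(s+i+t)\bigg|_{t=1}
\]
survives. For each $i\geq 1$ the coefficient collapses to $\binom{i}{0}\frac{2i}{i}=2$. The $i=0$ summand appears at $t=1$ in the formally indeterminate shape $\binom{0}{0}\frac{0}{0}$, but its correct value is $1$; this is transparent from the $t\geq 2$ pattern $\binom{t-1}{t-1}\frac{t-1}{t-1}=1$ and can be independently verified by returning one step in the derivation of \eqref{equ.ylv3nb4} and evaluating the genuine source term at $i=0$, $t=1$ directly (it originates from a single pairing $\zeta(s-t+i)\zeta(r+2t-i)$ in \eqref{equ.llgretd}, not from two merged contributions, which is precisely what forbids the doubling). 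With these coefficients settled, the right-hand side becomes
\[
(-1)^{s+1}\left[\zeta(s+1)^2+2\sum_{i=1}^{s-1}(-1)^i\zeta(s-i+1)\zeta(s+i+1)\right].
\]

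Equating the two sides and multiplying through by $(-1)^{s+1}$ (using $((-1)^{s+1})^2=1$) yields the claimed identity. The only delicate point is the $i=0$ coefficient, which I would cross-check by evaluating both sides for $s=1$ and $s=2$ against Euler's classical formula of Theorem~\ref{th.nj1j516} (which furnishes independent closed forms for $\sum_{\mu=1}^\infty H_\mu/\mu^3$ and $\sum_{\mu=1}^\infty H_\mu/\mu^5$), thereby confirming the coefficient assignment before trusting the general formula.
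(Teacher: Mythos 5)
Your proposal is correct and follows exactly the paper's route: the paper obtains Theorem~\ref{thm.ctfkdby} precisely by setting $t=1$ in~\eqref{equ.ylv3nb4}, which is what you do. Your extra care over the formally indeterminate $i=0$ coefficient $\binom{0}{0}\tfrac{0}{0}$ (resolved to $1$, and confirmed against Euler's formula at $s=1$, where the right-hand side is exactly that single term) is a point the paper passes over silently, and your resolution agrees with the stated theorem.
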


Dividing through~\eqref{equ.k9w7fxv} by $\mu^{m+1}$ and summing over $\mu$ gives
\[
( - 1)^{m - 1} \sum_{\nu  = 1}^\infty  {\left\{\frac{1}{{\nu ^m }}\sum_{\mu  = 1}^\infty  {\frac{{H_\mu  }}{{\mu (\nu  + \mu )}}} \right\}}  = \sum_{\mu  = 1}^\infty  {\frac{{H_\mu  }}{{\mu ^{m + 1} }}}  + \sum_{i = 1}^{m - 1} {( - 1)^i \zeta (i + 1)\zeta (m - i + 1)}\,, 
\]
that is
\[
( - 1)^{m - 1} \sum_{\nu  = 1}^\infty  {\frac{{H_\nu  }}{{\nu ^{m + 1} }}}  = \sum_{\nu  = 1}^\infty  {\frac{{H_\nu  }}{{\nu ^{m + 1} }}}  + \sum_{i = 1}^{m - 1} {( - 1)^i \zeta (i + 1)\zeta (m - i + 1)}\,, 
\]
from which, by setting $m=2s$ and shifting the summation index in the second sum of the right hand side, we obtain
\begin{thm}
For $s\in\mathbb{Z^+}$ holds
\begin{equation}\label{equ.lm6twhv}
2\sum_{\nu  = 1}^\infty  {\frac{{H_\nu  }}{{\nu ^{2s + 1} }}}  = \sum_{i = 0}^{2s - 2} {( - 1)^i \zeta (i + 2)\zeta (2s - i)}\,, 
\end{equation}
a result that was derived first in~\cite{georghiou} and later in~\cite{boyad}.

\end{thm}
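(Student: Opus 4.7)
The plan is to leverage the identity derived in the paragraph immediately preceding the theorem, namely
\[
(-1)^{m-1} \sum_{\nu=1}^\infty \frac{H_\nu}{\nu^{m+1}} = \sum_{\nu=1}^\infty \frac{H_\nu}{\nu^{m+1}} + \sum_{i=1}^{m-1}(-1)^i \zeta(i+1)\zeta(m-i+1),
\]
and to specialize it according to the parity of $m$. When $m$ is odd the prefactor $(-1)^{m-1}$ equals $1$, the two copies of $E(1,m+1)$ cancel, and the identity collapses to a purely zeta-valued relation among products of zeta values — giving no information about the target Euler sum. When $m=2s$ is even, however, $(-1)^{m-1}=-1$, the two copies of $E(1,m+1)$ combine into $-2\,E(1,2s+1)$, and one obtains
\[
-2\sum_{\nu=1}^\infty \frac{H_\nu}{\nu^{2s+1}} = \sum_{i=1}^{2s-1}(-1)^i \zeta(i+1)\zeta(2s-i+1).
\]

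Next I would recast the right-hand side in the form stated in the theorem by applying the index shift identity~\eqref{equ.mjf11t4}. Specifically, multiplying through by $-1$ and substituting $i \mapsto i+1$ rewrites the sum over $1\le i\le 2s-1$ with integrand $(-1)^{i-1}\zeta(i+1)\zeta(2s-i+1)$ as a sum over $0\le i\le 2s-2$ with integrand $(-1)^i \zeta(i+2)\zeta(2s-i)$, yielding the displayed identity.

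There is essentially no remaining obstacle: the substantive work — establishing the partial-fraction identity~\eqref{equ.k9w7fxv}, dividing by $\mu^{m+1}$, summing over $\mu$, and collapsing the resulting double sum via interchange of summation together with~\eqref{equ.gbbnxxa} — has already been carried out just above the theorem statement. The only conceptual point is the parity discussion, which explains precisely why this approach pins down the \emph{odd}-weight Euler sums $E(1,2s+1)$ but yields only a consistency check at even weight.
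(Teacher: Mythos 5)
Your proposal is correct and follows essentially the same route as the paper: specialize the identity $(-1)^{m-1}E(1,m+1)=E(1,m+1)+\sum_{i=1}^{m-1}(-1)^i\zeta(i+1)\zeta(m-i+1)$ at $m=2s$ so the two copies of $E(1,2s+1)$ combine into $-2E(1,2s+1)$, then re-index the zeta sum via~\eqref{equ.mjf11t4}. The parity remark is a nice clarification but the substance matches the paper's derivation exactly.
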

Note that the identities~\eqref{equ.nvv0110} and \eqref{equ.lm6twhv} are equivalent since
\[
\begin{split}
&( - 1)^{s - 1} \zeta (s + 1)^2  + 2\sum_{i = 0}^{s - 2} {( - 1)^i \zeta (i + 2)\zeta (2s - i)}\\
&\qquad=\sum_{i = 0}^{s - 2} {( - 1)^i \zeta (i + 2)\zeta (2s - i)}\\
&\quad\qquad+\sum_{i = 0}^{s - 1} {( - 1)^i \zeta (i + 2)\zeta (2s - i)}
\end{split}
\]
which is equivalent to the sum on right side of~\eqref{equ.lm6twhv}.

\bigskip

Setting $n=2s+1$ in the identity of Theorem~\ref{th.nj1j516} and eliminating $E(1,2s+1)$ between the resulting identity and~\eqref{equ.lm6twhv}, we obtain, after a little manipulation,
\begin{thm}[Euler]
\[(2s+1)\zeta(2s)=2\sum_{i=1}^{s-1}{\zeta(2s-2i)\zeta(2i)}\,,\quad s-1\in\mathbb{Z^+}\,,\]
\end{thm}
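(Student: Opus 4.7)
My plan is to follow exactly the recipe indicated in the text: use Euler's Theorem~\ref{th.nj1j516} and the identity~\eqref{equ.lm6twhv} as two different evaluations of $2E(1,2s+1)$, equate them, and exploit the fact that one evaluation is a \emph{signed} sum and the other is an \emph{unsigned} sum so that the odd-weight terms cancel and the even-weight terms double.

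First I substitute $n=2s+1$ into Theorem~\ref{th.nj1j516} to obtain
\[
2E(1,2s+1) \;=\; (2s+3)\zeta(2s+2) \;-\; \sum_{j=1}^{2s-1}\zeta(j+1)\zeta(2s+1-j).
\]
On the other hand, the identity~\eqref{equ.lm6twhv} furnishes the alternating evaluation
\[
2E(1,2s+1) \;=\; \sum_{i=0}^{2s-2}(-1)^i\,\zeta(i+2)\zeta(2s-i).
\]
Equating these two expressions for $2E(1,2s+1)$ and cancelling it from both sides will produce a single identity containing only products of Riemann zeta values, all of weight $2s+2$.

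Next I align the two sums via the index shifts $k=j+1$ in the first sum and $k=i+2$ in the second; both sums become $\sum_{k=2}^{2s}\zeta(k)\zeta(2s+2-k)$, with the second sum carrying the sign $(-1)^k$. Transposing and combining yields
\[
(2s+3)\zeta(2s+2) \;=\; \sum_{k=2}^{2s}\bigl[1+(-1)^k\bigr]\,\zeta(k)\zeta(2s+2-k),
\]
so that the odd-$k$ terms (products of odd-argument zetas, which are not known to be reducible) drop out and only the even-$k$ terms survive, each with coefficient $2$. Writing $k=2i$ with $1\le i\le s$ gives
\[
(2s+3)\zeta(2s+2) \;=\; 2\sum_{i=1}^{s}\zeta(2i)\zeta(2s+2-2i).
\]

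The final step is purely cosmetic: replacing $s$ by $s-1$ throughout (permitted since the prior identities are valid for all $s\in\mathbb{Z^+}$, hence the shifted version holds for $s-1\in\mathbb{Z^+}$) delivers the claim
\[
(2s+1)\zeta(2s) \;=\; 2\sum_{i=1}^{s-1}\zeta(2s-2i)\zeta(2i).
\]
There is no genuine obstacle here; the only delicate point is the bookkeeping of the two reindexings and the observation that the cancellation of odd-argument zeta products is automatic because the two independent evaluations of $E(1,2s+1)$ happen to disagree precisely in the sign pattern of those terms.
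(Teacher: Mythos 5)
Your proposal is correct and follows exactly the route the paper takes: it sets $n=2s+1$ in Theorem~\ref{th.nj1j516}, eliminates $E(1,2s+1)$ against the alternating evaluation~\eqref{equ.lm6twhv}, and the reindexing/parity bookkeeping (the "little manipulation" the paper alludes to) is carried out correctly, including the final shift $s\mapsto s-1$ that accounts for the hypothesis $s-1\in\mathbb{Z^+}$.
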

the well-known zeta function relation due to Euler.

\subsubsection{Certain combinations of linear Euler sums that \mbox{evaluate} to zeta values}

Researchers have noted that linear Euler sums of even weight are probably not reducible to zeta values alone~\cite{georghiou, borwein295, flajolet}. In a 1998 paper~\cite{flajolet}, Flajolet and Salvy gave a couple of examples of linear combinations of Euler sums of even weight, expressed in terms of the Riemann zeta function. Such evaluations are also found in~\cite{huard,espinosa2,espinosa1}. In this section we discover certain combinations of linear Euler sums that evaluate to zeta values. 

\bigskip

Tornheim proved that (equation (8) of~\cite{tornheim})
\[
T(1,1,s) = (s + 1)\zeta (s + 2) - \sum_{i = 2}^s {\zeta (i)\zeta (s - i + 2)},\quad s\in\mathbb{Z^+}\,, 
\]
which, in view of~\eqref{equ.llgretd}, gives
\[
\sum_{i = 1}^s {\left[ {\sum_{\mu  = 1}^\infty  {\frac{{H_{\mu ,i} }}{{\mu ^{s - i + 2} }}} } \right]}  = \sum_{i = 1}^s {E(i,s - i + 2)}  = (s + 1)\zeta (s + 2)\,,
\]
from which it follows that
\begin{thm} For $s\in\mathbb{Z^+}$ holds
\[
2\sum_{i = 2}^s {E(i,s - i + 2)}  = (s - 1)\zeta (s + 2)+\sum_{j=1}^{s-1}{\zeta(j+1)\zeta(s-j+1)}\,.
\]

\end{thm}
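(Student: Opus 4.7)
The plan is to derive the claim directly from the identity that immediately precedes it in the excerpt, namely
\[
\sum_{i=1}^s E(i,s-i+2) = (s+1)\zeta(s+2),
\]
combined with the Euler formula of Theorem~\ref{th.nj1j516} applied to the single exceptional term $E(1,s+1)$.

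First I would split off the $i=1$ term and rearrange:
\[
\sum_{i=2}^s E(i,s-i+2) = (s+1)\zeta(s+2) - E(1,s+1).
\]
Next I would invoke Theorem~\ref{th.nj1j516} with $n=s+1$, which gives
\[
2E(1,s+1) = (s+3)\zeta(s+2) - \sum_{j=1}^{s-1}\zeta(j+1)\zeta(s-j+1).
\]
Doubling the previous display and substituting this expression for $2E(1,s+1)$ yields
\[
2\sum_{i=2}^s E(i,s-i+2) = 2(s+1)\zeta(s+2) - (s+3)\zeta(s+2) + \sum_{j=1}^{s-1}\zeta(j+1)\zeta(s-j+1),
\]
and the coefficient of $\zeta(s+2)$ collapses to $2(s+1)-(s+3) = s-1$, giving exactly the claimed identity.

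There is no real obstacle here; the only minor bookkeeping point is to confirm that the range of the sum in Euler's formula (which uses upper limit $n-2$ and summand $\zeta(j+1)\zeta(n-j)$) matches, after setting $n=s+1$, the sum $\sum_{j=1}^{s-1}\zeta(j+1)\zeta(s-j+1)$ appearing in the target statement. One must also remark that the identity is vacuously consistent at $s=1$ (both sides being $0$), so the statement as written for $s\in\mathbb{Z}^+$ is correct; for $s\ge 2$ the computation above is the proof.
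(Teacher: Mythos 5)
Your proposal is correct and follows exactly the paper's route: split off the $i=1$ term from $\sum_{i=1}^{s}E(i,s-i+2)=(s+1)\zeta(s+2)$ and eliminate $2E(1,s+1)$ via Euler's formula (Theorem~\ref{th.nj1j516} with $n=s+1$). The bookkeeping on the summation range and the $s=1$ check are both right, so nothing is missing.
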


\bigskip

Setting $r=0$ in~\eqref{equ.n4vyrlv} and using Corollary~\ref{cor.p5c00dq}, we obtain
\[
\begin{split}
&2\sum_{i = 1}^s {\binom {2s-i-1}{s-1}\sum_{\mu  = 1}^\infty  {\frac{{H_{\mu ,i} }}{{\mu ^{2s - i} }}} }\\
&\quad =4\sum_{i = 1}^{\lfloor s/2\rfloor} {\binom {2s-2i-1}{s-1}\zeta (2i)\zeta (2s - 2i)}\\
&\quad\qquad +(-1)^{s-1}\left(\zeta(s)^2-\zeta(2s)\right)\,,
\end{split}
\]
from which we get
\begin{thm}\label{thm.xzc9oni} For $s-1\in\mathbb{Z^+}$ holds
\[
\begin{split}
&2\sum_{i = 2}^{s-1} {\binom {2s-i-1}{s-1}\sum_{\mu  = 1}^\infty  {\frac{{H_{\mu ,i} }}{{\mu ^{2s - i} }}} }\\
&\quad =4\sum_{i = 1}^{\lfloor s/2\rfloor} {\binom {2s-2i-1}{s-1}\zeta (2i)\zeta (2s - 2i)}\\
&\qquad -2\binom {2s-2}{s-1}\sum_{i=2}^s{(-1)^i\zeta(i)\zeta(2s-i)}\\
&\quad\qquad -(-1)^{s-1}\zeta(s)^2\left[\binom {2s-2}{s-1}+(-1)^{s-1}-1\right]\\
&\qquad\qquad -(-1)^{s-1}\zeta(2s)\left[(-1)^{s-1}+1\right]\,,
\end{split}
\]
after using Theorem~\ref{th.nj1j516} to write $E(1,2s-1)$ and using also the fact that \mbox{$2E(s,s)=\zeta(s)^2+\zeta(2s)$}.
\end{thm}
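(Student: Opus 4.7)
The plan is to take the displayed identity immediately preceding the theorem,
\[
2\sum_{i=1}^s \binom{2s-i-1}{s-1}\sum_{\mu=1}^\infty \frac{H_{\mu,i}}{\mu^{2s-i}}
= 4\sum_{i=1}^{\lfloor s/2\rfloor}\binom{2s-2i-1}{s-1}\zeta(2i)\zeta(2s-2i)+(-1)^{s-1}\bigl[\zeta(s)^2-\zeta(2s)\bigr],
\]
and isolate the two boundary terms ($i=1$ and $i=s$) of the sum on the left, so that what remains on the left is precisely $2\sum_{i=2}^{s-1}\binom{2s-i-1}{s-1}E(i,2s-i)$ as in the theorem. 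Everything else is algebra: evaluate the two boundary contributions in closed form and move them to the right-hand side.

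For the $i=s$ boundary, $2\binom{s-1}{s-1}E(s,s)=2E(s,s)=\zeta(s)^2+\zeta(2s)$ by the stated symmetry identity. For $i=1$ we need $2\binom{2s-2}{s-1}E(1,2s-1)$; I would apply Theorem~\ref{th.nj1j516} to obtain
\[
2E(1,2s-1)=(2s+1)\zeta(2s)-\sum_{i=2}^{2s-2}\zeta(i)\zeta(2s-i),
\]
and then convert this to the alternating form that appears in the statement. The conversion uses the Euler zeta relation $(2s+1)\zeta(2s)=2\sum_{j=1}^{s-1}\zeta(2j)\zeta(2s-2j)$ (proved immediately above the theorem): substituting it turns $2E(1,2s-1)$ into $\sum_{i=2}^{2s-2}(-1)^i\zeta(i)\zeta(2s-i)$, after which the reflection $i\mapsto 2s-i$ folds the range $s+1\le i\le 2s-2$ onto $2\le i\le s-1$, yielding the compact form
\[
2E(1,2s-1)=(-1)^{s-1}\zeta(s)^2+2\sum_{i=2}^{s}(-1)^i\zeta(i)\zeta(2s-i).
\]

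Plugging the two boundary evaluations back into the preceding identity and collecting coefficients of $\zeta(s)^2$, $\zeta(2s)$, and the alternating $\zeta(i)\zeta(2s-i)$ sum gives the claim. The main obstacle I expect is bookkeeping, not substance: the $\zeta(s)^2$ coefficient accumulates contributions $(-1)^{s-1}$ (from the starting identity), $-(-1)^{s-1}\binom{2s-2}{s-1}$ (from the $i=1$ boundary), and $-1$ (from the $i=s$ boundary), which must be rearranged into the printed form $-(-1)^{s-1}\bigl[\binom{2s-2}{s-1}+(-1)^{s-1}-1\bigr]$; likewise the $\zeta(2s)$ coefficient $-(-1)^{s-1}-1$ must be rewritten as $-(-1)^{s-1}\bigl[(-1)^{s-1}+1\bigr]$. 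Both rewritings are elementary parity manipulations, so the argument should go through without any new analytic input beyond Theorems~\ref{th.nj1j516}, the symmetry $2E(s,s)=\zeta(s)^2+\zeta(2s)$, and the Euler zeta relation.
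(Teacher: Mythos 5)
Your proposal is correct and follows exactly the paper's own route: split off the $i=1$ and $i=s$ boundary terms of the identity obtained from setting $r=0$ in~\eqref{equ.n4vyrlv}, evaluate them via Theorem~\ref{th.nj1j516} (together with Euler's relation $(2s+1)\zeta(2s)=2\sum_{j=1}^{s-1}\zeta(2j)\zeta(2s-2j)$ to reach the alternating form) and via $2E(s,s)=\zeta(s)^2+\zeta(2s)$, then collect coefficients. Your intermediate formula $2E(1,2s-1)=(-1)^{s-1}\zeta(s)^2+2\sum_{i=2}^{s}(-1)^i\zeta(i)\zeta(2s-i)$ and the resulting coefficients of $\zeta(s)^2$ and $\zeta(2s)$ all check out against the printed statement.
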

If we set $s=3$ in the above identity we obtain the well known result
\begin{cor}\label{thm.t7si22d}
\[
3\sum_{\mu=1}^\infty{\frac{H_{\mu,2}}{\mu^4}}=3\,\zeta(3)^2-\zeta(6)\,.
\]
\end{cor}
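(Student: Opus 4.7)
The plan is to obtain the corollary as a direct specialization of Theorem~\ref{thm.xzc9oni} at $s=3$; no additional machinery is needed.

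First I would simplify the left-hand side. With $s=3$, the index range $2\le i\le s-1$ collapses to the single value $i=2$, and $\binom{2s-i-1}{s-1}=\binom{3}{2}=3$. Hence the LHS becomes
\[
2\cdot 3\sum_{\mu=1}^\infty \frac{H_{\mu,2}}{\mu^4}=6\sum_{\mu=1}^\infty \frac{H_{\mu,2}}{\mu^4}.
\]

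Next I would evaluate each of the four pieces of the RHS at $s=3$. The first sum has only the term $i=1$, giving $4\binom{3}{2}\zeta(2)\zeta(4)=12\zeta(2)\zeta(4)$. The second sum, $-2\binom{4}{2}\sum_{i=2}^{3}(-1)^i\zeta(i)\zeta(6-i)$, contributes $-12\zeta(2)\zeta(4)+12\zeta(3)^2$. Since $s-1=2$ is even, $(-1)^{s-1}=1$, and the bracketed factors reduce to $\binom{4}{2}+1-1=6$ and $1+1=2$, so the third and fourth pieces contribute $-6\zeta(3)^2$ and $-2\zeta(6)$ respectively. Adding the four contributions, the $\zeta(2)\zeta(4)$ terms cancel, leaving $6\zeta(3)^2-2\zeta(6)$.

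Equating the two sides and dividing by $2$ yields exactly
\[
3\sum_{\mu=1}^\infty \frac{H_{\mu,2}}{\mu^4}=3\zeta(3)^2-\zeta(6),
\]
which is the claimed identity. The only real ``obstacle'' is bookkeeping: making sure each binomial coefficient, sign and summation range is read off correctly from Theorem~\ref{thm.xzc9oni} when $s=3$, since several terms nearly cancel and a single sign error would spoil the evaluation. No new analytic input is required.
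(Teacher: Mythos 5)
Your proposal is correct and is exactly the paper's route: the paper obtains this corollary by setting $s=3$ in Theorem~\ref{thm.xzc9oni}, and your term-by-term evaluation (LHS $=6\sum_{\mu\ge1}H_{\mu,2}/\mu^4$, RHS $=12\zeta(2)\zeta(4)-12\zeta(2)\zeta(4)+12\zeta(3)^2-6\zeta(3)^2-2\zeta(6)$) checks out.
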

At $s=5$ and $s=6$, respectively, we have
\begin{cor}\label{thm.ab4k60c}
\[
\begin{split}
&7\sum_{\mu  = 1}^\infty  {\frac{{H_{\mu ,2} }}{{\mu ^8 }}}  + 3\sum_{\mu  = 1}^\infty  {\frac{{H_{\mu ,3} }}{{\mu ^7 }}}  + \sum_{\mu  = 1}^\infty  {\frac{{H_{\mu ,4} }}{{\mu ^6 }}}\\ 
&\qquad\qquad =  - 12\zeta (4)\zeta (6) + 14\zeta (3)\zeta (7) + 7\zeta (5)^2  - \frac{{\zeta (10)}}{5}
\end{split}
\]
\end{cor}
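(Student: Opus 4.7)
The plan is to specialize Theorem~\ref{thm.xzc9oni} at $s=5$ and perform a careful accounting of the terms on the right-hand side. With $s=5$ we have $(-1)^{s-1}=+1$, $\lfloor s/2\rfloor=2$, and the two critical binomials $\binom{2s-2}{s-1}=\binom{8}{4}=70$ and $\binom{s-1}{?}$ as needed below.

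First I would evaluate the left-hand side: the sum over $i$ runs from $2$ to $s-1=4$, with $\binom{2s-i-1}{s-1}=\binom{7}{4},\binom{6}{4},\binom{5}{4}=35,15,5$, producing
\[
2\left(35\sum_{\mu}\frac{H_{\mu,2}}{\mu^8}+15\sum_{\mu}\frac{H_{\mu,3}}{\mu^7}+5\sum_{\mu}\frac{H_{\mu,4}}{\mu^6}\right)=10\left(7\sum_{\mu}\frac{H_{\mu,2}}{\mu^8}+3\sum_{\mu}\frac{H_{\mu,3}}{\mu^7}+\sum_{\mu}\frac{H_{\mu,4}}{\mu^6}\right),
\]
so the overall factor of $10$ will correspond to the final division by $10$ in the stated corollary.

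Next I would expand the right-hand side term by term. The Euler sum over $i=1,2$ contributes $4\bigl(\binom{7}{4}\zeta(2)\zeta(8)+\binom{5}{4}\zeta(4)\zeta(6)\bigr)=140\zeta(2)\zeta(8)+20\zeta(4)\zeta(6)$. The alternating sum contributes $-2\cdot 70\sum_{i=2}^{5}(-1)^i\zeta(i)\zeta(10-i)=-140\zeta(2)\zeta(8)+140\zeta(3)\zeta(7)-140\zeta(4)\zeta(6)+140\zeta(5)^2$. The $\zeta(s)^2$ bracket simplifies because $(-1)^{s-1}-1=0$, leaving $-70\zeta(5)^2$, and the $\zeta(2s)$ contribution is $-2\zeta(10)$.

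The crucial simplification is that $\zeta(2)\zeta(8)$ cancels between the first and second contributions, which is what produces a clean answer. Collecting what remains yields $140\zeta(3)\zeta(7)-120\zeta(4)\zeta(6)+(140-70)\zeta(5)^2-2\zeta(10)$; dividing through by $10$ delivers the stated identity. There is no real obstacle beyond bookkeeping; the only substantive observation is the vanishing of the $\zeta(2)\zeta(8)$ coefficient, which is forced by the identity $4\binom{2s-3}{s-1}=2\binom{2s-2}{s-1}$ at $s=5$ and explains why the evaluation requires no input beyond Theorem~\ref{thm.xzc9oni} itself.
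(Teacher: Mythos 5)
Your computation is correct and follows exactly the paper's route: the corollary is obtained by specializing Theorem~\ref{thm.xzc9oni} at $s=5$ and carrying out the arithmetic, including the cancellation of the $\zeta(2)\zeta(8)$ terms (which, as you note, comes from $4\binom{2s-3}{s-1}=2\binom{2s-2}{s-1}$, an identity that in fact holds for all $s$, not just $s=5$). Nothing further is needed.
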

and
\begin{cor}
\[
\begin{split}
&126\sum_{\mu  = 1}^\infty  {\frac{{H_{\mu ,2} }}{{\mu ^{10} }}}  + 56\sum_{\mu  = 1}^\infty  {\frac{{H_{\mu ,3} }}{{\mu ^9 }}}  + 21\sum_{\mu  = 1}^\infty  {\frac{{H_{\mu ,4} }}{{\mu ^8 }}}  + 6\sum_{\mu  = 1}^\infty  {\frac{{H_{\mu ,5} }}{{\mu ^7 }}}\\ 
&\qquad\qquad =  - 210\zeta (4)\zeta (8) - 125\zeta (6)^2  + 252\zeta (3)\zeta (9) + 252\zeta (5)\zeta (7)\,.
\end{split}
\]
\end{cor}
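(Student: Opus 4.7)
The plan is simply to specialize Theorem~\ref{thm.xzc9oni} at $s=6$ and collect terms, since this corollary is the direct evaluation at that value; no new input is required beyond Theorem~\ref{thm.xzc9oni}, Euler's Theorem~\ref{th.nj1j516} (already absorbed into the theorem), and the reflection $2E(s,s)=\zeta(s)^2+\zeta(2s)$ (also already absorbed).

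First I would evaluate the four binomial coefficients on the left: with $2s-1=11$ and $s-1=5$, one has $\binom{9}{5}=126$, $\binom{8}{5}=56$, $\binom{7}{5}=21$, $\binom{6}{5}=6$, matching exactly the coefficients $126,56,21,6$ of $\sum H_{\mu,i}/\mu^{12-i}$ for $i=2,3,4,5$ on the claimed left-hand side (after dividing the master identity through by $2$).

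Next I would compute the four pieces of the right-hand side of Theorem~\ref{thm.xzc9oni} at $s=6$. The first piece yields $4\bigl[\binom{9}{5}\zeta(2)\zeta(10)+\binom{7}{5}\zeta(4)\zeta(8)+\binom{5}{5}\zeta(6)^2\bigr]=504\,\zeta(2)\zeta(10)+84\,\zeta(4)\zeta(8)+4\,\zeta(6)^2$. The second piece, with $\binom{10}{5}=252$, gives $-504\bigl[\zeta(2)\zeta(10)-\zeta(3)\zeta(9)+\zeta(4)\zeta(8)-\zeta(5)\zeta(7)+\zeta(6)^2\bigr]$. The third piece, using $(-1)^{s-1}=-1$ and $\binom{10}{5}+(-1)^{s-1}-1=250$, contributes $+250\,\zeta(6)^2$. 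The fourth piece vanishes, since $(-1)^{s-1}+1=0$ when $s=6$.

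Finally I would add everything: the $\zeta(2)\zeta(10)$ contributions cancel ($504-504=0$); the $\zeta(4)\zeta(8)$ coefficient is $84-504=-420$; the $\zeta(6)^2$ coefficient is $4-504+250=-250$; and the $\zeta(3)\zeta(9)$ and $\zeta(5)\zeta(7)$ terms each come in with $+504$. Dividing by the factor $2$ on the left of Theorem~\ref{thm.xzc9oni} then produces
\[
-210\,\zeta(4)\zeta(8)-125\,\zeta(6)^2+252\,\zeta(3)\zeta(9)+252\,\zeta(5)\zeta(7),
\]
which is the asserted right-hand side. There is no real obstacle here; the only thing to watch is the sign of $(-1)^{s-1}$ and the arithmetic of the binomial coefficients $\binom{11-i}{5}$, $\binom{11-2i}{5}$, and $\binom{10}{5}$, which must be computed without slipping a sign in the alternating sum $\sum_{i=2}^{6}(-1)^i\zeta(i)\zeta(12-i)$.
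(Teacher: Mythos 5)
Your proposal is correct and is exactly the paper's route: the corollary is obtained by specializing Theorem~\ref{thm.xzc9oni} at $s=6$ and dividing by the overall factor of $2$. I verified the binomial coefficients $\binom{9}{5},\binom{8}{5},\binom{7}{5},\binom{6}{5}$, the vanishing of the $\zeta(2)\zeta(10)$ and $\zeta(12)$ contributions, and the resulting coefficients $-210,-125,252,252$; all of your arithmetic checks out.
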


\bigskip

It was proved in~\cite{huard} that
\begin{equation}\label{equ.m2uzmco}
T(s,s,s) = \frac{4}{{1 + 2( - 1)^s }}\sum_{i = 0}^{\lfloor {s/2} \rfloor } {\binom{2s-2i-1}{s-1}\zeta (2i)\zeta (3s - 2i)}\,.
\end{equation}
Setting $r=s$ in~\eqref{equ.n4vyrlv} and equating with~\eqref{equ.m2uzmco} gives
\[
\begin{split}
&\sum_{i = 1}^s {\binom{2s-i-1}{s-1}\sum_{\mu  = 1}^\infty  {\frac{{H_{\mu ,i} }}{{\mu ^{3s - i} }}} }\\
&\qquad\qquad= \binom{2s-1}{s-1}\frac{{2\zeta (3s)}}{{2 + ( - 1)^s }}\\
&\qquad\qquad\quad + \frac{2}{{2( - 1)^s  + 1}}\sum_{i = 1}^{\left\lfloor {s/2} \right\rfloor } {\binom{2s-2i-1}{s-1}\zeta (2i)\zeta (3s - 2i)}\,,
\end{split} 
\]
from which we get
\begin{thm} For $s\in\mathbb{Z^+}$ holds
\[
\begin{split}
\sum_{i = 2}^s &{\binom{2s-i-1}{s-1}\sum_{\mu  = 1}^\infty  {\frac{{H_{\mu ,i} }}{{\mu ^{3s - i} }}} }\\
&\quad\qquad= \binom{2s-1}{s-1}\frac{{2\zeta (3s)}}{{2 + ( - 1)^s }}\\
&\qquad\qquad - \binom{2s-2}{s-1}\frac{{(3s + 1)\zeta (3s)}}{2}\\
&\quad\qquad\qquad + \frac{1}{2}\binom{2s-2}{s-1}\sum_{i = 1}^{3s - 3} {\zeta (3s - i - 1)\zeta (i + 1)}\\ 
&\qquad\qquad\qquad + \frac{2}{{2( - 1)^s  + 1}}\sum_{i = 1}^{\left\lfloor {s/2} \right\rfloor } {\binom{2s-2i-1}{s-1}\zeta (2i)\zeta (3s - 2i)}\,.
\end{split}
\]
\end{thm}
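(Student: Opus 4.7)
The plan is to begin from the identity that has already been derived in the paragraph immediately preceding the theorem, namely the consequence of setting $r=s$ in~\eqref{equ.n4vyrlv} and equating with Huard's evaluation~\eqref{equ.m2uzmco} of $T(s,s,s)$:
\[
\sum_{i=1}^s \binom{2s-i-1}{s-1}\sum_{\mu=1}^\infty \frac{H_{\mu,i}}{\mu^{3s-i}} = \binom{2s-1}{s-1}\frac{2\zeta(3s)}{2+(-1)^s} + \frac{2}{2(-1)^s+1}\sum_{i=1}^{\lfloor s/2\rfloor}\binom{2s-2i-1}{s-1}\zeta(2i)\zeta(3s-2i).
\]
The first step is to split off the $i=1$ summand on the left, which reads
\[
\binom{2s-2}{s-1}\sum_{\mu=1}^\infty \frac{H_\mu}{\mu^{3s-1}} = \binom{2s-2}{s-1}\,E(1,3s-1),
\]
so that all remaining summands ($i=2,\dots,s$) already match the left-hand side of the claim.

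The second step is to eliminate $E(1,3s-1)$ in favour of $\zeta$-values by appealing to Theorem~\ref{th.nj1j516}; Euler's formula at $n=3s-1$ reads
\[
2E(1,3s-1) = (3s+1)\zeta(3s) - \sum_{j=1}^{3s-3}\zeta(j+1)\zeta(3s-1-j).
\]
Transposing $\binom{2s-2}{s-1}E(1,3s-1)$ to the right of the starting identity and substituting Euler's formula produces exactly the four contributions listed in the theorem: the term $\binom{2s-1}{s-1}\frac{2\zeta(3s)}{2+(-1)^s}$ and the double-zeta sum over $1\le i\le\lfloor s/2\rfloor$ are inherited verbatim, while $-\binom{2s-2}{s-1}\frac{(3s+1)\zeta(3s)}{2}$ and $+\frac{1}{2}\binom{2s-2}{s-1}\sum_{i=1}^{3s-3}\zeta(3s-i-1)\zeta(i+1)$ arise from Euler's formula after the cosmetic relabeling $j\mapsto i$.

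There is essentially no substantive obstacle; the argument is pure bookkeeping. The only points requiring care are (i) the sign flip when $\binom{2s-2}{s-1}E(1,3s-1)$ crosses the equality, and (ii) preserving the summation range $1\le j\le 3s-3$ in Euler's formula when it is rewritten in terms of $i$. Since both ingredients—the starting identity and Theorem~\ref{th.nj1j516}—have already been established earlier in the paper, no auxiliary lemma is needed.
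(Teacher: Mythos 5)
Your proposal is correct and follows exactly the paper's own route: the paper likewise starts from the identity obtained by setting $r=s$ in~\eqref{equ.n4vyrlv} and equating with~\eqref{equ.m2uzmco}, splits off the $i=1$ term $\binom{2s-2}{s-1}E(1,3s-1)$, and replaces it via Euler's formula of Theorem~\ref{th.nj1j516} with $n=3s-1$. The bookkeeping (sign on transposition, the range $1\le j\le 3s-3$, and the relabeling $j\mapsto i$) all checks out.
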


\bigskip

When $t>1$ in~\eqref{equ.ylv3nb4} and we use Theorem~\ref{th.nj1j516} to write $E(1,2s+t)$ we have
\begin{thm}\label{thm.bwgw7lo}
For $s,\,t-1\in\mathbb{Z^+}$ holds
\begin{equation}\label{equ.ge54qm8}
\begin{split}
&\sum_{i = 1}^{t - 1} {\left[ {\binom {s+i-1}s\frac{{2s + i - 1}}{{s + i - 1}}\sum_{\mu  = 1}^\infty  {\frac{{H_{\mu ,t - i + 1} }}{{\mu ^{2s + i} }}} } \right]}\\ 
&\quad= \sum_{i = 1}^{t - 1} { {\binom {s+i-1}s\frac{{2s + i - 1}}{{s + i - 1}}\zeta (t - i + 1)\zeta (2s + i)} }\\ 
&\qquad + ( - 1)^{s + 1} \sum_{i = 0}^{s - 1} {( - 1)^i \binom {t+i-1}{t-1}\frac{{t + 2i - 1}}{{t + i - 1}}\zeta (s - i + 1)\zeta (s + i + t)}\\
&\quad\qquad + \frac{1}{2}\frac{{(2s + t - 1)}}{{(s + t - 1)}}\binom {s+t-1}{t-1}\sum_{i = 1}^{2s + t - 2} {\zeta (i + 1)\zeta (2s + t - i)}\\ 
&\qquad\qquad - \frac{1}{2}\frac{{(2s + t - 1)(2s + t + 2)}}{{(s + t - 1)}}\binom {s+t-1}{t-1}\zeta (2s + t + 1)\,.
\end{split}
\end{equation}
or, equivalently,
\begin{equation}\label{equ.r81stqo}
\begin{split}
&\sum_{i = 2}^t {\left[ {\binom {s+t-i}s\frac{{2s + t - i}}{{s + t - i}}\sum_{\mu  = 1}^\infty  {\frac{{H_{\mu ,i} }}{{\mu ^{2s + t - i + 1} }}} } \right]}\\ 
&\quad= (-1)^{t-s}\sum_{i = t - s + 2}^{t + 1} { (-1)^i{\binom {2t-i}{t-1}\frac{{3t - 2i + 1}}{{2t - i}}\zeta (s - t + i)\zeta (s+2t-i+1)} }\\ 
&\qquad + \sum_{i = 2}^t {\binom {s+t-i}s\frac{{2s + t - i}}{{s + t - i}}\zeta (i)\zeta (2s + t -i + 1)}\\
&\quad\qquad + \frac{1}{2}\frac{{(2s + t - 1)}}{{(s + t - 1)}}\binom {s+t-1}{t-1}\sum_{i = 1}^{2s + t - 2} {\zeta (i + 1)\zeta (2s + t - i)}\\ 
&\qquad\qquad - \frac{1}{2}\frac{{(2s + t - 1)(2s + t + 2)}}{{(s + t - 1)}}\binom {s+t-1}{t-1}\zeta (2s + t + 1)\,.
\end{split}
\end{equation}

\end{thm}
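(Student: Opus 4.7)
The plan is to derive \eqref{equ.ge54qm8} from \eqref{equ.ylv3nb4} by invoking Euler's formula (Theorem~\ref{th.nj1j516}) to eliminate the single classical Euler sum that survives on the left of \eqref{equ.ylv3nb4}, and then to obtain \eqref{equ.r81stqo} from \eqref{equ.ge54qm8} by the index-shift identity \eqref{equ.mjf11t4}.

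For the first step, observe that the hypothesis $t-1\in\mathbb{Z^+}$ forces $2s+t\ge 4$, so that $\sum_{\mu=1}^\infty H_\mu/\mu^{2s+t}=E(1,2s+t)$ lies within the scope of Theorem~\ref{th.nj1j516}. I would therefore substitute Euler's evaluation
\[
2\sum_{\mu=1}^\infty\frac{H_\mu}{\mu^{2s+t}}=(2s+t+2)\zeta(2s+t+1)-\sum_{j=1}^{2s+t-2}\zeta(j+1)\zeta(2s+t-j)
\]
into \eqref{equ.ylv3nb4} and transpose the resulting zeta-value expression to the right-hand side. Both new summands acquire the common prefactor $\tfrac{1}{2}\,\tfrac{2s+t-1}{s+t-1}\binom{s+t-1}{t-1}$, which multiplied against $(2s+t+2)\zeta(2s+t+1)$ produces the last line of \eqref{equ.ge54qm8} and against the convolution $\sum_{j}\zeta(j+1)\zeta(2s+t-j)$ produces the penultimate line.

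For the second step, apply the substitution $i\mapsto t+1-i$ to the LHS sum and to the first zeta-zeta sum on the RHS of \eqref{equ.ge54qm8}. Under this change $H_{\mu,t-i+1}/\mu^{2s+i}\mapsto H_{\mu,i}/\mu^{2s+t-i+1}$, $\binom{s+i-1}{s}\mapsto\binom{s+t-i}{s}$, $(2s+i-1)/(s+i-1)\mapsto(2s+t-i)/(s+t-i)$, and the range $1\le i\le t-1$ becomes $2\le i\le t$, reproducing the LHS and the second RHS sum of \eqref{equ.r81stqo}. The same substitution applied to the alternating sum $(-1)^{s+1}\sum_{i=0}^{s-1}(-1)^i\binom{t+i-1}{t-1}\frac{t+2i-1}{t+i-1}\zeta(s-i+1)\zeta(s+i+t)$ sends $t+i-1\mapsto 2t-i$, $t+2i-1\mapsto 3t-2i+1$, $s-i+1\mapsto s-t+i$, $s+i+t\mapsto s+2t-i+1$, shifts the range to $t-s+2\le i\le t+1$, and via $(-1)^{s+1+(t+1-i)}=(-1)^{s+t-i}=(-1)^{t-s+i}$ (using $(-1)^{2s}=1$) produces the sign $(-1)^{t-s}(-1)^i$ displayed in \eqref{equ.r81stqo}.

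The main obstacle is purely bookkeeping: no new analytic input beyond Euler's formula and \eqref{equ.mjf11t4} is required, so the only genuine risk is a sign slip in the second index shift, where one must verify that the prefactor $(-1)^{s+1}\cdot(-1)^{i_{\mathrm{old}}}$ correctly rewrites to $(-1)^{t-s}\cdot(-1)^{i_{\mathrm{new}}}$ modulo the parity identity $(-1)^{2s}=1$, and that all binomial and zeta factors reassemble into the forms stipulated by \eqref{equ.r81stqo}.
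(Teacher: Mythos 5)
Your proposal is correct and follows exactly the paper's route: the paper obtains \eqref{equ.ge54qm8} by substituting Euler's evaluation of $E(1,2s+t)$ from Theorem~\ref{th.nj1j516} into \eqref{equ.ylv3nb4} and transposing, and then passes to \eqref{equ.r81stqo} by the index shift $i\mapsto t+1-i$ via \eqref{equ.mjf11t4}. Your bookkeeping of the binomial, zeta, and sign factors (including $(-1)^{s+t}=(-1)^{t-s}$) checks out.
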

If we set $t=3$ in identity~\eqref{equ.ge54qm8} we obtain
\begin{cor}
For $s\in\mathbb{Z^+}$ holds
\[
\begin{split}
2\sum_{\mu  = 1}^\infty  {\frac{{H_{\mu ,3} }}{{\mu ^{2s + 1} }}} \, +\, &(2s + 1)\sum_{\mu  = 1}^\infty  {\frac{{H_{\mu ,2} }}{{\mu ^{2s + 2} }}}\\
&\quad = 2\zeta (3)\zeta (2s + 1)\\
&\qquad + \frac{{( - 1)^{s + 1} }}{2}(s + 1)^2 \zeta (s + 2)^2\\
&\qquad\quad + ( - 1)^{s + 1} \sum_{i = 1}^{s-1} {( - 1)^i (s - i + 1)(s + i + 1)\zeta (s - i + 2)\zeta (s + i + 2)}\,. 
\end{split} 
\]

In particular (see also~\cite{flajolet}, page 23),
\[
2\sum_{\mu  = 1}^\infty  {\frac{{H_{\mu ,3} }}{{\mu ^5 }}}  + 5\sum_{\mu  = 1}^\infty  {\frac{{H_{\mu ,2} }}{{\mu ^6 }}}  =  - \frac{9}{2}\zeta (4)^2 + 10\zeta (3)\zeta (5)
\]
and
\begin{equation}\label{equ.bdgdtr}
2\sum_{\mu  = 1}^\infty  {\frac{{H_{\mu ,3} }}{{\mu ^7 }}}  + 7\sum_{\mu  = 1}^\infty  {\frac{{H_{\mu ,2} }}{{\mu ^8 }}}  =  - 15\zeta (4)\zeta(6) + 14\zeta (3)\zeta (7) + 8\zeta (5)^2\,. 
\end{equation}
\end{cor}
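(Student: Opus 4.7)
The plan is to specialize identity~\eqref{equ.ge54qm8} of Theorem~\ref{thm.bwgw7lo} at $t=3$ and then simplify. With $t=3$ the index $i$ on the left runs only over $\{1,2\}$; evaluating the binomial prefactors $\binom{s}{s}\cdot\frac{2s}{s}=2$ and $\binom{s+1}{s}\cdot\frac{2s+1}{s+1}=2s+1$ collapses the LHS to exactly $2\sum_\mu H_{\mu,3}/\mu^{2s+1}+(2s+1)\sum_\mu H_{\mu,2}/\mu^{2s+2}$, as required.

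For the RHS I would unpack each of the four pieces separately. The first sum yields $2\zeta(3)\zeta(2s+1)+(2s+1)\zeta(2)\zeta(2s+2)$. In the second sum the factor $\binom{i+2}{2}(2i+2)/(i+2)$ simplifies to $(i+1)^2$, so this piece becomes $(-1)^{s+1}\sum_{i=0}^{s-1}(-1)^i(i+1)^2\zeta(s-i+1)\zeta(s+i+3)$, which after the reindexing $j=i+1$ takes the general form $\sum_{j=1}^s(\text{coeff})\,\zeta(s-j+2)\zeta(s+j+2)$ that matches the shape of the claim. The third term reduces to the symmetric weight-$(2s+4)$ convolution $\frac{(s+1)^2}{2}\sum_{i=1}^{2s+1}\zeta(i+1)\zeta(2s+3-i)$, and the fourth term becomes $\frac{(s+1)^2(2s+5)}{2}\zeta(2s+4)$.

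The main technical step is to eliminate the isolated $\zeta(2s+4)$ using Euler's classical zeta relation $(2n+1)\zeta(2n)=2\sum_{k=1}^{n-1}\zeta(2k)\zeta(2n-2k)$ (proved earlier in the paper via \eqref{equ.lm6twhv} and Theorem~\ref{th.nj1j516}) with $n=s+2$, to obtain $(2s+5)\zeta(2s+4)=2\sum_{k=1}^{s+1}\zeta(2k)\zeta(2s+4-2k)$. Splitting the convolution in the third term into even-argument products $\zeta(2k)\zeta(2s+4-2k)$ and odd-argument products $\zeta(2k+1)\zeta(2s+3-2k)$, the even-argument part cancels exactly against the expanded fourth term together with the $(2s+1)\zeta(2)\zeta(2s+2)$ coming from the first sum and the even-argument contribution (occurring at $j=s$) of the reindexed second sum. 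The surviving odd-argument convolution, combined with the odd-argument part of the reindexed second sum and the $\zeta(3)\zeta(2s+1)$ contribution from the first sum, then regroups—upon exploiting the symmetry $i\leftrightarrow 2s+2-i$ of the convolution and isolating its central term at $i=s+1$, which produces $\frac{(s+1)^2}{2}\zeta(s+2)^2$—to the stated right-hand side.

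The principal obstacle is the signed bookkeeping: one must verify that every even-even product $\zeta(2k)\zeta(2s+4-2k)$ has total coefficient zero, and that the surviving odd-odd products $\zeta(s-i+2)\zeta(s+i+2)$ accrue the correct coefficient $(-1)^{s+1}(-1)^i(s-i+1)(s+i+1)$ for $i=1,\dots,s-1$, while the central $\zeta(s+2)^2$ receives coefficient $(-1)^{s+1}(s+1)^2/2$ (the sign emerging from the parity interplay of the second sum and the symmetric part of the convolution). Once the main identity is in place, the two numerical specializations follow by direct substitution: setting $s=2$ gives the weight-$8$ identity, and setting $s=3$ yields~\eqref{equ.bdgdtr}.
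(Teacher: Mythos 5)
Your main line of attack---substituting $t=3$ into \eqref{equ.ge54qm8} and simplifying---is exactly what the paper does (the paper offers no detail beyond the substitution), and your evaluation of the four pieces after setting $t=3$ is correct: the left side collapses to $2\sum_\mu H_{\mu,3}/\mu^{2s+1}+(2s+1)\sum_\mu H_{\mu,2}/\mu^{2s+2}$, the coefficient in the second sum simplifies to $(i+1)^2$, and the convolution and the isolated zeta term carry the factor $(s+1)^2$ as you state (note the fourth term enters with a minus sign, which your prose drops).

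However, the bookkeeping you describe for the final reduction contains a false step: it is \emph{not} true that every even--even product $\zeta(2k)\zeta(2s+4-2k)$ acquires total coefficient zero. The target identity itself retains such products with nonzero coefficients --- at $s=2$ the answer is $-\tfrac92\zeta(4)^2+10\zeta(3)\zeta(5)$, and in general $\zeta(s-i+2)\zeta(s+i+2)$ is an even--even product whenever $i\equiv s\pmod 2$ and carries coefficient $-\left[(s+1)^2-i^2\right]\ne 0$. Tracking coefficients shows that only $\zeta(2)\zeta(2s+2)$ cancels completely; for $2\le k\le s$ the residual coefficient is $(s+2-2k)^2-(s+1)^2\ne 0$, so your proposed cancellation scheme fails. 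The clean way to finish is to observe that
\[
\frac{(s+1)^2}{2}\sum_{i=1}^{2s+1}\zeta(i+1)\zeta(2s+3-i)-\frac{(s+1)^2(2s+5)}{2}\zeta(2s+4)=-(s+1)^2E(1,2s+3)
\]
by Euler's Theorem~\ref{th.nj1j516} with $n=2s+3$, and then to replace $E(1,2s+3)$ using the alternating form \eqref{equ.nvv0110} (Theorem~\ref{thm.ctfkdby} with $s$ replaced by $s+1$). The resulting term $-\,(s+1)^2\sum_{i=0}^{s-1}(-1)^i\zeta(i+2)\zeta(2s+2-i)$, after the reindexing $i\mapsto s-i$, combines with your reindexed second sum through the identity $(s-i+1)(s+i+1)=(s+1)^2-i^2$ to give exactly the stated right-hand side, the $i=0$ term absorbing the $(2s+1)\zeta(2)\zeta(2s+2)$ from the first sum. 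With that repair the derivation goes through, and the two numerical cases then follow by direct substitution as you say.
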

Combining~\eqref{equ.bdgdtr} and the identity of Corollary~\ref{thm.ab4k60c} we get
\[
\sum_{\mu  = 1}^\infty  {\frac{{H_{\mu ,3} }}{{\mu ^7 }}}  + \sum_{\mu  = 1}^\infty  {\frac{{H_{\mu ,4} }}{{\mu ^6 }}}  =  3\zeta (4)\zeta (6) - \zeta (5)^2  - \frac{{\zeta (10)}}{5}\,.
\]

Setting $t=s-1$ in~\eqref{equ.r81stqo} yields
\begin{cor}
For $s-2\in\mathbb{Z^+}$ holds
\[
\begin{split}
&\sum_{i = 2}^{s - 1} {\left\{ {\binom{2s-i-1}s\frac{{3s - i - 1}}{{2s - i - 1}}\sum_{\mu  = 1}^\infty  {\frac{{H_{\mu ,i} }}{{\mu ^{3s - i} }}} } \right\}}\\
&\quad = \sum_{i = 2}^{s + 1} {( - 1)^i \binom{2s-i-1}{s-2}\frac{{3s - 2i}}{{2s - i - 1}}\zeta (i)\zeta (3s - i)}\\
&\qquad + \sum_{i = 2}^{s - 1} {\binom{2s-i-1}s\frac{{3s - i - 1}}{{2s - i - 1}}\zeta (i)\zeta (3s - i)}\\
&\quad\qquad + \frac{1}{4}\frac{{(3s - 2)}}{{(s - 1)}}\binom{2s-2}{s-2}\sum_{i = 2}^{3s - 2} {\zeta (i)\zeta (3s - i)}\\
&\qquad\qquad - \frac{1}{4}\frac{{(3s - 2)(3s + 1)}}{{(s - 1)}}\binom{2s-2}{s-2}\zeta (3s)\,.
\end{split}
\]
\end{cor}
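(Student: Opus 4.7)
The plan is to obtain the corollary as a direct specialization of equation~\eqref{equ.r81stqo} of Theorem~\ref{thm.bwgw7lo} with the choice $t=s-1$. The hypothesis $t-1\in\mathbb{Z^+}$ of the theorem then becomes $s-2\in\mathbb{Z^+}$, exactly matching the hypothesis of the corollary, so the substitution is admissible.

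First, I would verify that every piece on the left-hand side transforms correctly. Substituting $t=s-1$ into the summand on the left of \eqref{equ.r81stqo} gives $s+t-i=2s-i-1$, $2s+t-i=3s-i-1$, and $2s+t-i+1=3s-i$, which immediately reproduces the prefactor $\binom{2s-i-1}{s}\frac{3s-i-1}{2s-i-1}$ and the inner Euler sum $\sum_\mu H_{\mu,i}/\mu^{3s-i}$, summed from $i=2$ to $i=t=s-1$.

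Next, I would treat the four terms on the right. The second, third, and fourth terms are routine substitutions: note that $s+t-1=2(s-1)$, so the prefactor $\tfrac{1}{2}/(s+t-1)$ becomes $1/[4(s-1)]$, yielding the $\tfrac{1}{4}(3s-2)/(s-1)\binom{2s-2}{s-2}$ factors in the last two sums, and the shift $i\mapsto i-1$ in the third sum re-indexes the range from $1\le i\le 2s+t-2$ to $2\le j\le 3s-2$ with summand $\zeta(j)\zeta(3s-j)$. The only slightly delicate step is the first right-hand term: with $t=s-1$ the prefactor $(-1)^{t-s}=-1$, the range becomes $1\le i\le s$, and $\binom{2t-i}{t-1}=\binom{2s-i-2}{s-2}$, giving arguments $\zeta(i+1)\zeta(3s-i-1)$. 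Applying the index shift $j=i+1$ from~\eqref{equ.mjf11t4}, the factor $-(-1)^{j-1}=(-1)^j$ emerges, the binomial becomes $\binom{2s-j-1}{s-2}$, the rational factor becomes $(3s-2j)/(2s-j-1)$, and the zeta arguments become $\zeta(j)\zeta(3s-j)$ with $2\le j\le s+1$, matching the first sum on the right of the corollary.

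Thus there is no conceptual difficulty; the only real work is careful bookkeeping of signs, binomial coefficients, and summation ranges. The single place where one must be especially attentive is the first right-hand term, where the combination of the external sign $(-1)^{t-s}$, the interior sign $(-1)^i$, and the index shift $j=i+1$ must be tracked simultaneously; I expect any computational error to arise there.
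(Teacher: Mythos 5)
Your proposal is correct and is exactly the paper's route: the corollary is stated in the paper as the immediate specialization $t=s-1$ of identity~\eqref{equ.r81stqo}, and your bookkeeping of the sign $(-1)^{t-s}=-1$, the index shift $j=i+1$ in the first and third right-hand sums, and the factor $\tfrac{1}{2(s+t-1)}=\tfrac{1}{4(s-1)}$ all check out.
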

In particular,
\[
\begin{split}
9\sum_{\mu  = 1}^\infty  {\frac{{H_{\mu ,2} }}{{\mu ^{10} }}}  + 2\sum_{\mu  = 1}^\infty  {\frac{{H_{\mu ,3} }}{{\mu ^9 }}}&=50\zeta (2)\zeta (10) + 18\zeta (3)\zeta (9)\\
&\qquad\qquad + 29\zeta (4)\zeta (8) + 24\zeta (5)\zeta (7)\\
&\quad\qquad\qquad + \frac{{25}}{2}\zeta (6)^2  - \frac{{325}}{2}\zeta (12)\,.
\end{split}
\]
Setting $t=s$ in~\eqref{equ.r81stqo} gives
\begin{cor}
For $s-1\in\mathbb{Z^+}$ holds
\[
\begin{split}
&\sum_{i = 2}^{s} {\left\{ {\binom{2s-i}s\frac{{3s - i}}{{2s - i}}\sum_{\mu  = 1}^\infty  {\frac{{H_{\mu ,i} }}{{\mu ^{3s - i + 1} }}} } \right\}}\\
&\quad = \sum_{i = 2}^{s + 1} {( - 1)^i \binom{2s-i}{s-1}\frac{{3s - 2i + 1}}{{2s - i}}\zeta (i)\zeta (3s - i+1)}\\
&\qquad + \sum_{i = 2}^{s} {\binom{2s-i}s\frac{{3s - i}}{{2s - i}}\zeta (i)\zeta (3s - i+1)}\\
&\quad\qquad + \frac{1}{2}\frac{{(3s - 1)}}{{(2s - 1)}}\binom{2s-1}{s-1}\sum_{i = 2}^{3s - 1} {\zeta (i)\zeta (3s - i + 1)}\\
&\qquad\qquad - \frac{1}{2}\frac{{(3s - 1)(3s + 2)}}{{(2s - 1)}}\binom{2s-1}{s-1}\zeta (3s+1)\,.
\end{split}
\]
\end{cor}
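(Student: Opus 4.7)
The natural approach is to specialize equation~\eqref{equ.r81stqo} of Theorem~\ref{thm.bwgw7lo} at $t=s$. The pattern of the preceding corollaries in this section (the cases $t=3$ and $t=s-1$ were each obtained by direct substitution into the same master identity) makes this the obvious strategy, and in fact the only substantive step is careful bookkeeping in the resulting expression.

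First I would record the substitutions: with $t=s$ one has $2s+t=3s$, $s+t-1=2s-1$, $2t-i=2s-i$, $s-t+i=i$, $s+2t-i+1=3s-i+1$, and $(-1)^{t-s}=1$. Term by term this gives: on the left, $\binom{s+t-i}{s}\frac{2s+t-i}{s+t-i}=\binom{2s-i}{s}\frac{3s-i}{2s-i}$ multiplying $\sum_\mu H_{\mu,i}/\mu^{3s-i+1}$, summed from $i=2$ to $i=s$; for the first term on the right, the index range $t-s+2\le i\le t+1$ collapses to $2\le i\le s+1$, the binomial becomes $\binom{2s-i}{s-1}$, the rational factor becomes $\frac{3s-2i+1}{2s-i}$, and the zeta product becomes $\zeta(i)\zeta(3s-i+1)$; for the second right-hand term, the coefficient of $\zeta(i)\zeta(3s-i+1)$ is again $\binom{2s-i}{s}\frac{3s-i}{2s-i}$ for $2\le i\le s$; for the final two terms, the prefactor $\frac{1}{2}\frac{2s+t-1}{s+t-1}\binom{s+t-1}{t-1}$ simplifies to $\frac{1}{2}\frac{3s-1}{2s-1}\binom{2s-1}{s-1}$, and $2s+t+1=3s+1$, $2s+t+2=3s+2$.

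The one cosmetic adjustment is in the third right-hand term, where the master identity has $\sum_{i=1}^{2s+t-2}\zeta(i+1)\zeta(2s+t-i)$. After the substitution $t=s$ this reads $\sum_{i=1}^{3s-2}\zeta(i+1)\zeta(3s-i)$; applying the index shift $j=i+1$ from~\eqref{equ.mjf11t4} converts it to $\sum_{j=2}^{3s-1}\zeta(j)\zeta(3s-j+1)$, matching the form displayed in the corollary.

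I do not anticipate a real obstacle — the work is pure algebraic substitution. The only place one has to be attentive is in confirming that the two distinct coefficient families appearing on the right, namely $\binom{2s-i}{s-1}\frac{3s-2i+1}{2s-i}$ (with the alternating sign and the extended summation range $2\le i\le s+1$) and $\binom{2s-i}{s}\frac{3s-i}{2s-i}$ (without sign, range $2\le i\le s$), remain separate rather than collapsing into one: the first comes from the $\zeta(s-t+i)\zeta(s+2t-i+1)$ line of the master identity, the second from the $\zeta(i)\zeta(2s+t-i+1)$ line, and they do not coincide because the binomial coefficients differ ($s-1$ versus $s$ in the lower entry) and the summation ranges differ by the endpoint $i=s+1$.
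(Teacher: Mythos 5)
Your proposal is correct and coincides with the paper's own derivation: the corollary is obtained precisely by setting $t=s$ in identity~\eqref{equ.r81stqo} of Theorem~\ref{thm.bwgw7lo}, and your term-by-term substitutions (including the index shift $j=i+1$ in the $\sum\zeta(i+1)\zeta(3s-i)$ term) all check out.
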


Interchanging $\mu$ and $\nu$ (purely for notational consistency) in the identity of Lemma~\ref{thm.eclzyl6} and taking limit $\nu\to\infty$, we obtain
\[
\begin{split}
&\sum_{i = 1}^n {\binom{n+s-i-1}{s-1}\sum_{\mu  = 1}^\infty  {\frac{{H_{\mu ,i} }}{{\mu ^{n + s - i} }}} }\\
&\quad  + \sum_{i = 1}^s {\binom{n+s-i-1}{n-1}\sum_{\mu  = 1}^\infty  {\frac{{H_{\mu ,i} }}{{\mu ^{n + s - i} }}} }\\
&\qquad= \zeta (n)\zeta (s)+\binom{n+s}n\zeta(n+s)\,,
\end{split}
\]
from which we get

\begin{thm}
For $n-1,s-1\in\mathbb{Z^+}$ holds
\[
\begin{split}
&\sum_{i = 2}^{n-1} {\binom{n+s-i-1}{s-1}\sum_{\mu  = 1}^\infty  {\frac{{H_{\mu ,i} }}{{\mu ^{n + s - i} }}} }\\
&\quad  + \sum_{i = 2}^{s-1} {\binom{n+s-i-1}{n-1}\sum_{\mu  = 1}^\infty  {\frac{{H_{\mu ,i} }}{{\mu ^{n + s - i} }}} }\\
&\qquad=\zeta (n + s)\left[ \binom{n+s}n-(n+s+1)\binom{n+s-2}{s-1}-1\right]\\
&\qquad\qquad+\binom{n+s-2}{s-1}\sum_{i=2}^{n+s-2}{\zeta(i)\zeta(n+s-i)}\,.
\end{split}
\]
In particular, setting $n=s$ recovers the identity of Theorem~\ref{thm.xzc9oni}.
\end{thm}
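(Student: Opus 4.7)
The plan is to start from the identity displayed immediately before the theorem, namely
\[
\sum_{i = 1}^n \binom{n+s-i-1}{s-1}\sum_{\mu=1}^\infty\frac{H_{\mu,i}}{\mu^{n+s-i}} + \sum_{i = 1}^s \binom{n+s-i-1}{n-1}\sum_{\mu=1}^\infty\frac{H_{\mu,i}}{\mu^{n+s-i}} = \zeta(n)\zeta(s)+\binom{n+s}{n}\zeta(n+s),
\]
and to peel off the four boundary terms: $i=1$ and $i=n$ from the first sum, and $i=1$ and $i=s$ from the second. What remains on the left is exactly the doubly-restricted sum $\sum_{i=2}^{n-1}+\sum_{i=2}^{s-1}$ that appears in the theorem statement.

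The $i=1$ contributions combine into $\bigl[\binom{n+s-2}{s-1}+\binom{n+s-2}{n-1}\bigr]E(1,n+s-1)$, which by the symmetry $\binom{n+s-2}{n-1}=\binom{n+s-2}{s-1}$ collapses to $2\binom{n+s-2}{s-1}E(1,n+s-1)$. I would then apply Euler's Theorem~\ref{th.nj1j516} with $n\mapsto n+s-1$ to express this as
\[
\binom{n+s-2}{s-1}\Bigl[(n+s+1)\zeta(n+s)-\sum_{j=1}^{n+s-3}\zeta(j+1)\zeta(n+s-1-j)\Bigr].
\]
The $i=n$ term of the first sum is $E(n,s)$ and the $i=s$ term of the second sum is $E(s,n)$; their sum simplifies via the symmetry relation~\eqref{equ.csxx7lv} to $\zeta(n+s)+\zeta(n)\zeta(s)$.

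Subtracting all four boundary contributions from the right-hand side $\zeta(n)\zeta(s)+\binom{n+s}{n}\zeta(n+s)$, the $\zeta(n)\zeta(s)$ pieces cancel, the coefficients of $\zeta(n+s)$ collect to $\binom{n+s}{n}-(n+s+1)\binom{n+s-2}{s-1}-1$, and the double-zeta sum, after the index shift $i=j+1$, becomes $\sum_{i=2}^{n+s-2}\zeta(i)\zeta(n+s-i)$ weighted by $\binom{n+s-2}{s-1}$. This matches the claimed identity term-for-term. The argument is purely bookkeeping with no conceptual obstacle; the only points requiring care are the binomial symmetry that merges the two $E(1,n+s-1)$ coefficients and the verification that the index range of Euler's formula lines up with $\sum_{i=2}^{n+s-2}$ after the shift.
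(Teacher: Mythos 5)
Your proposal is correct and follows exactly the route the paper intends: the displayed identity preceding the theorem is split by peeling off the $i=1$ and $i=n$ (respectively $i=s$) boundary terms, the two $E(1,n+s-1)$ contributions merge via $\binom{n+s-2}{n-1}=\binom{n+s-2}{s-1}$ and are evaluated by Euler's Theorem~\ref{th.nj1j516}, and $E(n,s)+E(s,n)$ is handled by the symmetry relation~\eqref{equ.csxx7lv}. The bookkeeping of the $\zeta(n+s)$ coefficient and the index shift $i=j+1$ in the double-zeta sum both check out, so nothing is missing.
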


\section{Summary}\label{sec.summary}
Salient aspects of relationships between generalized harmonic numbers, Euler sums and Tornheim series were discussed in this paper. 

\bigskip

We showed that every linear Euler sum can be decomposed into a linear combination of Tornheim double series of the same weight, that is,
\[
E(n,r)=\sum_{p=1}^n {T(r-1,n-p+1,p)}
\]
and
\[
E(n,r)=\sum_{p=1}^n{(-1)^{p-1}\binom npT(r-p,n,p)}
\,.
\]
Evaluation of certain sums in terms of generalized Harmonic numbers and the Riemann zeta function were obtained. Examples include
\[
\begin{split}
\sum_{\nu  = 1}^\infty  {\frac{1}{{\nu ^s (\nu  + \mu )^t }}}  &= \sum_{i = 0}^{s - 2} {\binom{t+i-1}i\frac{{( - 1)^i \zeta (s - i)}}{{\mu ^{t + i} }}}\\
&\quad+ ( - 1)^s\sum_{i = 0}^{t - 2} {\binom{s+i-1}i\frac{{ \left[ {\zeta (t - i) - H_{\mu ,t - i} } \right]}}{{\mu ^{s + i} }}}\\
&\qquad + ( - 1)^{s - 1} \binom{s+t-2}{s-1}\frac{{H_\mu  }}{{\mu ^{s + t - 1} }}\,,
\end{split}
\]

\[
\sum_{p = 1}^n {\left\{ {( - 1)^{p - 1} \sum_{\nu  = 1}^N {\frac{\mu }{{\nu ^{n - p + 1} (\mu  - \nu )^p }}} } \right\}}=H_{N,n}  - ( - 1)^n \left[ {H_{\mu  - 1,n}  - H_{\mu  - N - 1,n} } \right]\,,
\]

\[
\begin{split}
&( - 1)^{m - 1} \sum_{p = 0}^n {\left\{ {( - 1)^p\binom mp \sum_{\nu  = 1}^\infty {\frac{{\mu ^{m - p} }}{{\nu ^m (\nu  + \mu )^{n - p + 1} }}} } \right\}}\\
&\qquad= H_{\mu ,n + 1}  + ( - 1)^n \sum_{i = n+1}^{m - 1} {\left\{ {( - 1)^i\binom in \mu^{i-n}\zeta({i + 1}) } \right\}}\,,
\end{split}
\]

\[
\begin{split}
&( - 1)^m \sum_{p = 0}^n {\left\{ {\binom{m+p-1}p\sum_{\nu  = 1}^\infty  {\frac{{\mu ^{m + 1} }}{{\nu ^{m + p} (\nu  + \mu )^{n - p + 1} }}} } \right\}}\\
&\qquad =  - \mu H_{\mu ,n + 1}  + \sum_{i = 2}^m {( - 1)^i \binom{i+n-1}n\mu ^i \zeta (n + i)}
\end{split}
\]
and
\[
\begin{split}
&\sum_{p = 0}^n {\left\{ {\binom{m+p}{m}\sum_{\nu  = 1}^\infty {\frac{{\mu ^{m+1} }}{{\nu ^{n-p + 1} (\nu  + \mu )^{m + p +1} }}} } \right\}}\\
&\quad\qquad=\sum_{i = n}^{m+n} {\binom in\mu ^{i - n} H_{\mu ,i + 1} }  -\sum_{i = n+1}^{m+n} {\binom in\mu ^{i - n} \zeta(i + 1) }\,.
\end{split}
\]
Using the functional relations alone (see Section~\ref{sec.functional}), it was already possible to derive various evaluations of the Tornheim series in a much more effortless manner than is found in earlier works on the subject.

\bigskip

We extended previously known results concerning linear Euler sums by deriving new closed form evaluations in Riemann zeta values of various Euler sums. Specifically, we obtained, among other results,
\[
\begin{split}
2\sum_{\nu  = 1}^\infty  {\frac{{H_\nu  }}{{(\nu  + \mu )^s }}} &= 2H_{\mu  - 1} \left( {\zeta (s) - H_{\mu  - 1,s} } \right)\\
&\quad + s\left( {\zeta (s + 1) - H_{\mu  - 1,s + 1} } \right)\\
&\qquad- \sum_{i = 1}^{s - 2} {\left\{ {\left( {\zeta (i + 1) - H_{\mu  - 1,i + 1} } \right)\left( {\zeta (s - i) - H_{\mu  - 1,s - i} } \right)} \right\}}\,,
\end{split}
\]
\[
\begin{split}
&2\sum_{\nu  = 1}^\infty  {\frac{{H_{\nu ,n} }}{{(\nu  + \mu )^n }}}  = H_{\mu ,2n}  - \zeta (2n) - H_{\mu ,n}^2  + \zeta (n)^2\\ 
&\qquad\quad + ( - 1)^{n - 1} 4\sum_{j = 1}^{\left\lfloor {n/2} \right\rfloor } {\binom{2n-2j-1}{n-1}\zeta (2j)H_{\mu  - 1,2n - 2j} }\\
&\qquad - ( - 1)^{n - 1} 2\sum_{j = 1}^n {\binom{2n-j-1}{n-1}( - 1)^j H_{\mu  - 1,2n - j} H_{\mu ,j} }\\ 
&\quad\qquad - ( - 1)^{n - 1} 2\sum_{j = 1}^n {\left\{ {\binom{2n-j-1}{n-1}( - 1)^j \sum_{k = 1}^{2n - j} {\left[ {\binom{2n-k-1}{j-1}\sum_{i = 1}^\mu  {\frac{{H_{i - 1,k} }}{{i^{2n - k} }}} } \right]} } \right\}}\\ 
&\qquad\qquad - ( - 1)^{n - 1} 2\sum_{j = 1}^n {\left\{ {\binom{2n-j-1}{n-1}( - 1)^j \sum_{k = 1}^j {\left[ {\binom{2n-k-1}{2n-j-1}\sum_{i = 1}^\mu  {\frac{{H_{i - 1,k} }}{{i^{2n - k} }}} } \right]} } \right\}}\,, 
\end{split}
\]
\[
\begin{split}
\mu \sum_{\nu  = 1}^\infty  {\frac{{H_{\nu ,n} }}{{\nu (\nu  + \mu )}}} &= ( - 1)^{n - 1} \sum_{j = 1}^{n - 1} {( - 1)^j H_{\mu  - 1,n - j}\, \zeta (j + 1)}\\ 
&\quad + ( - 1)^{n - 1} \sum_{k = 1}^n \left\{( - 1)^k{\sum_{j = 1}^{n - k + 1} \left[{ { \binom{n-j}{k-1}\sum_{i = 1}^{\mu} {\frac{{H_{i - 1,j} }}{{i^{n - j + 1} }}} }} \right]}\right\}\\ 
&\qquad + ( - 1)^{n - 1} \sum_{k = 1}^n\left\{ ( - 1)^k{\sum_{j = 1}^k\left[ {{\binom{n-j}{n-k}\sum_{i = 1}^{\mu} {\frac{{H_{i - 1,j} }}{{i^{n - j + 1} }}} }} \right]}\right\}\\ 
&\quad\qquad + \left( {1 + ( - 1)^{n - 1} } \right)H_\mu  H_{\mu  - 1,n}\\
&\qquad\qquad - \sum_{i = 1}^{\mu  - 1} {\frac{H_i}{i^n} }  + \zeta (n + 1)
\end{split}
\]
and
\[
\begin{split}
&2\sum_{\nu  = 1}^\infty  {\frac{{H_\nu  }}{{\nu ^s (\mu  + \nu )^t }}}  = ( - 1)^{s - 1} \binom{s+t-2}{s-1}\frac{{H_{\mu  - 1}^2  + H_{\mu  - 1,2}  + 2\zeta (2)}}{{\mu ^{s + t - 1} }}\\
&\qquad\qquad + \sum_{i = 0}^{s - 2} {\binom{t+i-1}i\frac{{( - 1)^i (s - i + 2)\zeta (s - i + 1)}}{{\mu ^{t + i} }}}\\ 
&\qquad\qquad\quad - \sum_{i = 0}^{s - 2} {\left\{ {\binom{t+i-1}i\frac{{( - 1)^i }}{{\mu ^{t + i} }}\sum_{j = 1}^{s - i - 2} {\zeta (j + 1)\zeta (s - i - j)} } \right\}}\\ 
&\qquad\quad + 2( - 1)^s H_{\mu  - 1} \sum_{i = 0}^{t - 2} {\binom{s+i-1}i\frac{{\left( {\zeta (t - i) - H_{\mu  - 1,t - i} } \right)}}{{\mu ^{s + i} }}}\\
&\quad + ( - 1)^s \sum_{i = 0}^{t - 2} {\binom{s+i-1}i\frac{{(t - i)\left( {\zeta (t - i + 1) - H_{\mu  - 1,t - i + 1} } \right)}}{{\mu ^{s + i} }}}\\
&\qquad - ( - 1)^s \sum_{i = 0}^{t - 2} {\left\{ {\binom{s+i-1}i\frac{1}{{\mu ^{s + i} }}\sum_{j = 1}^{t - i - 2} {\left( {\zeta (j + 1) - H_{\mu  - 1,j + 1} } \right)\left( {\zeta (t - i - j) - H_{\mu  - 1,t - i - j} } \right)} } \right\}}\,. 
\end{split}
\]
Finally we derived certain combinations of linear Euler sums that evaluate to zeta values, for example,
\[
\begin{split}
&\sum_{i = 2}^t {\left[ {\binom {s+t-i}s\frac{{2s + t - i}}{{s + t - i}}\sum_{\mu  = 1}^\infty  {\frac{{H_{\mu ,i} }}{{\mu ^{2s + t - i + 1} }}} } \right]}\\ 
&\quad= (-1)^{t-s}\sum_{i = t - s + 2}^{t + 1} { (-1)^i{\binom {2t-i}{t-1}\frac{{3t - 2i + 1}}{{2t - i}}\zeta (s - t + i)\zeta (s+2t-i+1)} }\\ 
&\qquad + \sum_{i = 2}^t {\binom {s+t-i}s\frac{{2s + t - i}}{{s + t - i}}\zeta (i)\zeta (2s + t -i + 1)}\\
&\quad\qquad + \frac{1}{2}\frac{{(2s + t - 1)}}{{(s + t - 1)}}\binom {s+t-1}{t-1}\sum_{i = 1}^{2s + t - 2} {\zeta (i + 1)\zeta (2s + t - i)}\\ 
&\qquad\qquad - \frac{1}{2}\frac{{(2s + t - 1)(2s + t + 2)}}{{(s + t - 1)}}\binom {s+t-1}{t-1}\zeta (2s + t + 1)
\end{split}
\]
and
\[
\begin{split}
&\sum_{i = 2}^{n-1} {\binom{n+s-i-1}{s-1}\sum_{\mu  = 1}^\infty  {\frac{{H_{\mu ,i} }}{{\mu ^{n + s - i} }}} }\\
&\quad  + \sum_{i = 2}^{s-1} {\binom{n+s-i-1}{n-1}\sum_{\mu  = 1}^\infty  {\frac{{H_{\mu ,i} }}{{\mu ^{n + s - i} }}} }\\
&\qquad=\zeta (n + s)\left[ \binom{n+s}n-(n+s+1)\binom{n+s-2}{s-1}-1\right]\\
&\qquad\qquad+\binom{n+s-2}{s-1}\sum_{i=2}^{n+s-2}{\zeta(i)\zeta(n+s-i)}\,.
\end{split}
\]

\medskip

\end{document}